\newtheorem{theorem}[equation]{Theorem}
\newtheorem{lemma}[equation]{Lemma}
\newtheorem{proposition}[equation]{Proposition}
\newtheorem{corollary}[equation]{Corollary}
\theoremstyle{definition}
\newtheorem{definition}[equation]{Definition}
\theoremstyle{remark}
\newtheorem{example}[equation]{Example}
\newtheorem{remark}[equation]{Remark}
\numberwithin{equation}{section}
\newcommand{\osf}{{\normalfont \textsf{X}}}
\newcommand{\lang}{\CL_{\osf}}
\newcommand{\ualgshift}{\TCA_R(\osf)}
\newcommand{\uCalgshift}{\TCA_{\mathbb C}(\osf)}
\newcommand{\udalgshift}{\TCD_R(\osf)}
\newcommand{\ucsalgshift}{\TCO_\osf}
\newcommand{\ucalgshift}{\TCO_{\osf}}
\newcommand{\alf}{\mathscr{A}}
\newcommand{\N}{\mathbb{N}}
\newcommand{\Z}{\mathbb{Z}}
\newcommand{\F}{\mathbb{F}}
\newcommand{\CA}{\mathcal{A}}
\newcommand{\CD}{\mathcal{D}}
\newcommand{\CL}{\mathcal{L}}
\newcommand{\CO}{\mathcal{O}}
\newcommand{\TCA}{\widetilde{\CA}}
\newcommand{\TCB}{\mathcal{U}}
\newcommand{\TCD}{\widetilde{\CD}}
\newcommand{\TCO}{\widetilde{\CO}}
\newcommand{\nn}{\mathbb{N}}
\newcommand{\eword}{\omega}
\newcommand{\vecspan}{\operatorname{span}}
\title[The socle of subshift algebras]{The socle of subshift algebras, with applications to subshift conjugacy}
\author[D. Gonçalves]{Daniel Gonçalves}
\address[Daniel Gonçalves and Danilo Royer]{Departamento de Matem\'atica, Universidade Federal de Santa Catarina, 88040-970 Florian\'opolis SC, Brazil. }
\email{daemig@gmail.com \\ daniloroyer@gmail.com}
\author[D. Royer]{Danilo Royer}
\begin{document}

\keywords{Subshift algebras, socle, minimal left ideals, conjugacy, OTW-subshifts, Leavitt path algebras}

\subjclass[2020]{16S10, 16S88, 37B10, 05E16}

\thanks{The first author was partially supported by Capes-Print Brazil, Conselho Nacional de Desenvolvimento Cient\'ifico e Tecnol\'ogico (CNPq) - Brazil, and Funda\c{c}\~ao de Amparo \`a Pesquisa e Inova\c{c}\~ao do Estado de Santa Catarina (FAPESC).}

\begin{abstract} 

We introduce the concept of "irrational paths" for a given subshift and use it to characterize all minimal left ideals in the associated unital subshift algebra. Consequently, we characterize the socle as the sum of the ideals generated by irrational paths. Proceeding, we construct a graph such that the Leavitt path algebra of this graph is graded isomorphic to the socle. This realization allows us to show that the graded structure of the socle serves as an invariant for the conjugacy of Ott-Tomforde-Willis subshifts and for the isometric conjugacy of subshifts constructed with the product topology. Additionally, we establish that the socle of the unital subshift algebra is contained in the socle of the corresponding unital subshift C*-algebra.

\end{abstract}

\maketitle

\section{Introduction}

Leavitt path algebras are noncommutative algebras that have attracted significant attention, due to their versatility and deep connections with various branches of mathematics, including combinatorics, C*-algebras, and symbolic dynamics. Particularly noteworthy, the classification of Leavitt path algebras is closely related to the classification of graph C*-algebras and those of subshifts of finite type. This relation enables the utilization of algebraic techniques and the inherent structure of Leavitt path algebras to address problems in symbolic dynamics, see \cite{Wconjec} and \cite{WillieHazrat} for a recent overview.

Expanding from the setup above, C*-algebras associated with subshifts over finite alphabets have been formally defined in prior works. However, due to the intricacies involved in dealing with arbitrary subshifts (not limited to finite type), the definition has gone through refinements, culminating with the formulation given in \cite{CarlsenShift}. Among other applications, conjugacy of subshifts can be described in terms of certain isomorphisms of the associated algebras, see \cite{BrixCarlsen}.

In the general context of subshifts over arbitrary alphabets, both purely algebraic and C*-algebraic subshift algebras have recently been defined, see \cite{BGGV3, BGGV}. These algebras generalize Leavitt path algebras of graphs and ultragraphs and can be seen as Leavitt path algebras of labeled graphs, \cite{BP1, BCGWLabel}. As with Leavitt path algebras, these new algebras exhibit a strong correlation with the underlying dynamics of the associated subshifts, including the characterization of conjugacy between Ott-Tomforde and Willis subshifts via algebraic terms. Consequently, the understanding of their structure is important. 

When addressing infinite alphabets, multiple definitions of a subshift exist. Among these, the approach outlined by Ott-Tomforde and Willis is extensively studied, see for example \cite{DDStep, MR3600124, GSS, GSS2s, OTW}, particularly in relation to the noncommutative subshift algebras introduced in \cite{BGGV3, BGGV}, exhibiting a strong connection with these algebras. It is worth noting that the subshifts introduced by Ott-Tomforde and Willis, hereafter referred to as OTW-subshifts, conform to the typical definition of subshifts when the alphabet is finite.

Our contribution, in this paper, to the description of the structure of subshift algebras lies in the socle and its grading. In the context of Leavitt path algebras, the socle is examined in \cite{Socle, Socle1}, and used in the classification program to generate an atlas of Leavitt path algebras of small graphs in \cite{atlas}. The socle series is studied in \cite{MR2823984} 
and, additionally, the socle of Kumijian-Pask algebras is considered in \cite{MR3354056}. In our investigation of the socle of a purely algebraic subshift algebra, we have identified a graph wherein the socle of the subshift algebra is graded isomorphic to the Leavitt path algebra of the said graph. This finding is novel even within the context of Leavitt path algebras, as previous studies have characterized the socle solely as an ideal, and enables us to discern properties of the socle and use it in applications. Indeed, it allows us to establish a combinatorial criterion that the underlying graphs of conjugate OTW-subshifts must satisfy. Building on \cite[Theorem~6.11]{BGGV3}, this implies that the graded socle serves as an invariant for isometric conjugacy of subshifts with the $\frac{1}{2^i}$-metric (which induces the product topology, see the discussion above \cite[Theorem~6.11]{BGGV3} for the precise definition of such subshifts). We give more details of our work below.

We devote Section~2 to preliminaries, where we recall the notions of subshift, the unital algebra associated with a subshift, and a few results that we use in the paper. We refer the reader to \cite{BGGV} for a more comprehensive introduction to subshift algebras. 

In Section~3, we describe the minimal left ideals in an unital subshift algebra, see Corollary~\ref{laVuelta}. To do this, we identify certain special elements in a subshift, which we call irrational paths (Definition~\ref{chuvacalor}). We show that any minimal left ideal is isomorphic to a left ideal generated by a one-point set given by an irrational path. Previously, left minimal ideals were linked with line paths, see \cite{Irreduciblerepresentations}. The use of irrational paths (which contain the line paths) completes the description of the left minimal ideals. 

With the description of the minimal left ideals obtained in Section~3 we proceed, in Section~4, to the study of the socle, which is the sum of all left minimal ideals of the algebra. We prove that the socle of an unital subshift algebra is not only isomorphic but equal, to the sum of left ideals generated by the irrational paths, see Theorem~\ref{socle description}. This result is new also in the context of Leavitt path algebras, where it was previously shown that the socle contains the ideal generated by line points, see Remark~\ref{carregador}. As a consequence of our results, we show that the socle of a unital subshift algebra is always different from the algebra, and is nonzero if and only if the set of irrational points is non-empty, see \ref{iogurtecaseiro}. We finish the section describing a smaller generating set for the socle. For this, we consider tail equivalence (Definition~\ref{def equiv line paths}) in the set of irrational paths and show that the socle is the direct sum, of the left minimal ideals generated by each equivalence class, see Corollary~\ref{socle characterization}. 

The construction of a graph associated with a subshift is the focus of Section 5. We show that the Leavitt path algebra of this graph is graded isomorphic to the subshift algebra, see Corollary~\ref{doubleskiff}. Moreover, we observe that the aforementioned graph is always acyclic, row-finite, and sinkless, see Remark~\ref{rmk48}. We use this to characterize, in terms of Condition~(Y), when the socle is strongly $\Z$-graded, see Proposition~\ref{regata}. As an application, we show that if two OTW-subshifts are conjugate then the graphs associated with the corresponding subshifts satisfy Condition~(Y), see Theorem~\ref{conjugacy}. We employ the latter criterion for two specific examples and conclude that neither the associated OTW-subshifts are conjugate, nor there is an isometric conjugacy between the associated subshifts built using the $\frac{1}{2^i}$-metric (which induces the product topology), see Corollary~\ref{saidarapida} for details.

We finish the paper in Section~6,  with a brief examination of the socle of the subshift C*-algebra. Specifically, for a given subshift, we show that the socle of the subshift algebra is contained in the socle of the subshift C*-algebra and we establish that this inclusion may be proper. This is the same behavior presented by Leavitt path algebras, see \cite[Theorem~3.6]{Socle1}.  However, the example we provide to illustrate the possibility of strict containment differs from the example in \cite{Socle1}, which relies on a graph with sinks (a setting not permitted for subshift algebras).

\section{Preliminaries}\label{eleicoes}

 In this section, we recall the definition of the unital subshift algebra and some relevant results regarding it. We start setting up notation and recalling the notion of a subshift.
 
Throughout the paper, $R$ stands for a commutative unital ring, $\nn=\{0,1,2,\ldots\}$, and $\nn^*=\{1,2,\ldots\}$.

\subsection{Symbolic Dynamics}\label{symbolic}

Let $\alf$ be a non-empty set, referred to as an \emph{alphabet}. The shift map $\sigma$ is a map from $\alf^\N$ to itself, defined as $\sigma(x)=y$, where $x=(x_n){n\in \N}$ and $y=(x{n+1})_{n\in \N}$. We designate elements of $\alf^*=\bigcup_{k=0}^\infty \alf^k$ as \emph{blocks} or \emph{words}, with $\omega$ representing the empty word. Additionally, we define $\alf^+=\alf^*\setminus\{\eword \}$. 
For any $\alpha\in\alf^*\cup\alf^{\N}$, $|\alpha|$ denotes its length. Given $1\leq i,j\leq |\alpha|$, we define $\alpha_{i,j}:=\alpha_i\cdots\alpha_j$ if $i\leq j$, and $\alpha_{i,j}=\eword$ if $i>j$. 
If $\beta\in\alf^*$, then $\beta\alpha$ denotes concatenation. For a block $\alpha\in \alf^k$, $\alpha^\infty$ represents the infinite word $\alpha \alpha \ldots$. A subset $\osf \subseteq \alf^\N$ is \emph{invariant} for $\sigma$ if $\sigma (\osf)\subseteq \osf$. For an invariant subset $\osf \subseteq \alf^\N$, we define $\CL_n(\osf)$ as the set of all words of length $n$ that appear in some sequence of $\osf$, that is, $$\CL_n(\osf):=\{(a_0\ldots a_{n-1})\in \alf^n:\ \exists \ x\in \osf \text{ s.t. } (x_0\ldots x_{n-1})=(a_0\ldots a_{n-1})\}.$$ It is evident that $\CL_n(\alf^\N)=\alf^n$, and it is always the case that $\CL_0(\osf)={\omega}$.
The \emph{language} of $\osf$, denoted as $\lang$, encompasses all finite words that manifest within some sequence of $\osf$. Formally,
$$\lang:=\bigcup_{n=0}^\infty\CL_n(\osf).$$
For elements $c,d\in \lang$, we use the notation $c_{|c|}\neq d_{|d|}$ meaning that the last letter of $c$ is different from the last letter of $d$.

Given $F\subseteq \alf^*$, the \emph{subshift} $\osf_F\subseteq \alf^\N$ is the set of all sequences $x$ in $\alf^\N$ such that no word of $x$ belongs to $F$. When the context is clear, we denote $\osf_F$ by $\osf$. 
The key sets that are used in the definition of the algebra associated with a subshift are defined below. 

\begin{definition}\label{Arapaima}
Let  $\osf$ be a subshift for an alphabet $\alf$. Given $\alpha,\beta\in \lang$, define \[C(\alpha,\beta):=\{\beta x\in\osf:\alpha x\in\osf\}.\] In particular, the set  $C(\eword,\beta)$ is denoted by $Z_{\beta}$ and called a \emph{cylinder set}, and the set  $C(\alpha,\eword)$ is denoted by $F_{\alpha}$ and called a \emph{follower set}. Notice that $\osf=C(\eword,\eword)$. 
\end{definition}

\subsection{Unital algebras of subshifts}\label{unital}

In this section, we recall the definition of the unital algebra associated with a general subshift $\osf$, as done in \cite{BGGV}. We start with the definition of the Boolean algebra associated with the sets of the form $C(\alpha,\beta)$.

\begin{definition}\label{diachuvoso}
Let $\osf$ be a subshift. Define $\TCB$ to be the Boolean algebra of subsets of $\osf$ generated by all $C(\alpha,\beta)$ for $\alpha,\beta\in\lang$, that is, $\TCB$ is the collections of sets obtained from finite unions, finite intersections, and complements of the sets $C(\alpha,\beta)$.
\end{definition}

We can now recall the definition of the unital algebra associated with a subshift.

\begin{definition}\label{universal properties}
Let $\osf$ be a subshift. We define the \emph{unital subshift algebra} $\ualgshift$ as the universal unital $R$-algebra  with generators $\{p_A: A\in\TCB\}$ and $\{s_a,s_a^*: a\in\alf\}$, subject to the relations:
\begin{enumerate}[(i)]
    \item $p_{\osf}=1$, $p_{A\cap B}=p_Ap_B$, $p_{A\cup B}=p_A+p_B-p_{A\cap B}$ and $p_{\emptyset}=0$, for every $A,B\in\TCB$;
    \item $s_as_a^*s_a=s_a$ and $s_a^*s_as_a^*=s_a^*$ for all $a\in\alf$;
    \item $s_{\beta}s^*_{\alpha}s_{\alpha}s^*_{\beta}=p_{C(\alpha,\beta)}$ for all $\alpha,\beta\in\lang$, where $s_{\eword}:=1$ and, for $\alpha=\alpha_1\ldots\alpha_n\in\lang$, $s_\alpha:=s_{\alpha_1}\cdots s_{\alpha_n}$ and $s_\alpha^*:=s_{\alpha_n}^*\cdots s_{\alpha_1}^*$.
\end{enumerate}
\end{definition}

It follows from the third relation of the previous definition that $s_\alpha s_\alpha^*=Z_\alpha$, and $s_\alpha^*s_\alpha=F_\alpha$, for each $\alpha \in \lang$. Recall that the relative range of $(A,\alpha)$, where $\alpha\in \lang$ and $A\in \TCB$, is given by \[r(A,\alpha)=\{x\in\osf: \alpha x\in A\}.\] 

Next, we recall some results that will be necessary in our work.

\begin{lemma}[\cite{reductiontheoremofsubshifts}, Lemma 3.7]\label{chucrute} Let $\osf$ be a subshift, $a,b\in \alf$, and $\gamma,\alpha \in \lang$.
\begin{enumerate}[\hspace{.3cm} \rm (1)]
\item If $\beta:=b\gamma\in \lang$, then $s_\beta^*s_a=\delta_{b,a}s_\gamma^*p_{F_a}$.\vspace{.2cm}
\item If $A\in \TCB$, then $p_As_\alpha=s_\alpha p_{r(A,\alpha)}$ and $s_\alpha^*p_A=p_{r(A,\alpha)}s_\alpha^*$.
\end{enumerate}
\end{lemma}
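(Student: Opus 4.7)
The plan is to treat the two parts separately, establishing part~(1) by a direct computation from the defining relations, and then bootstrapping part~(2) from part~(1) by induction on the Boolean structure of $\TCB$.

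For part~(1), I would write $s_\beta^{*}=s_\gamma^{*}s_b^{*}$ from the definition in (iii), reducing the claim to $s_b^{*}s_a=\delta_{b,a}\,p_{F_a}$. When $a=b$, applying relation (iii) with $\alpha=a$ and $\beta=\eword$ (so $s_\eword=1$) gives $s_a^{*}s_a=p_{C(a,\eword)}=p_{F_a}$. When $a\neq b$, the cylinders $Z_a$ and $Z_b$ are disjoint in $\osf$, so relation (i) yields $p_{Z_a}p_{Z_b}=p_{\emptyset}=0$, i.e.\ $s_a s_a^{*}s_b s_b^{*}=0$. Multiplying on the left by $s_a^{*}$ and invoking $s_a^{*}s_a s_a^{*}=s_a^{*}$ from (ii) kills the left factor and leaves $s_a^{*}s_b s_b^{*}=0$; multiplying on the right by $s_b$ and using (ii) once more gives $s_a^{*}s_b=0$, as needed.

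For part~(2), the strategy is to prove both identities simultaneously, first on the generating family $\{C(\mu,\nu):\mu,\nu\in\lang\}$ of $\TCB$ and then extending along the Boolean operations. On a generator $A=C(\mu,\nu)$, we have $p_A=s_\nu s_\mu^{*}s_\mu s_\nu^{*}$ by (iii), so the computation of $p_A s_\alpha=s_\nu s_\mu^{*}s_\mu s_\nu^{*}s_\alpha$ reduces to pushing $s_\nu^{*}$ past $s_\alpha$. Part~(1) (iterated letter by letter along $\nu$ and $\alpha$) produces three natural cases: $\alpha$ is a prefix of $\nu$, $\nu$ is a prefix of $\alpha$, or neither. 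In the first two cases one obtains an expression of the form $s_\alpha$ times a product of $s$'s, $s^{*}$'s, and a $p_{F_\cdot}$ factor, which can be recognized as $s_\alpha p_{r(A,\alpha)}$ by computing the relative range directly from its definition; in the third case $s_\nu^{*}s_\alpha=0$ by part~(1) and the relative range $r(A,\alpha)$ is empty, so both sides vanish. The identity $s_\alpha^{*}p_A=p_{r(A,\alpha)}s_\alpha^{*}$ is obtained symmetrically (or by applying the ``$*$'' that swaps $s_a$ and $s_a^{*}$).

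To extend from generators to all of $\TCB$ one observes that $r(\,\cdot\,,\alpha)$ commutes with intersection, union, and complement (in $\osf$), so the Boolean identities in (i) transport the two desired commutation rules through finite unions, intersections, and complements: for instance, $p_{A\cap B}s_\alpha=p_A p_B s_\alpha=p_A s_\alpha p_{r(B,\alpha)}=s_\alpha p_{r(A,\alpha)}p_{r(B,\alpha)}=s_\alpha p_{r(A\cap B,\alpha)}$, and the relation $p_{A\cup B}=p_A+p_B-p_{A\cap B}$ handles unions immediately. The main obstacle is the case analysis in the base step: keeping track of which of $\alpha$, $\mu$, $\nu$ is a prefix of which, and translating the resulting algebraic simplification into the correct description of $r(C(\mu,\nu),\alpha)$, so that the right-hand side genuinely matches $s_\alpha p_{r(A,\alpha)}$.
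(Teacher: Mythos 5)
The paper does not actually prove this lemma: it is imported verbatim from \cite{reductiontheoremofsubshifts} (Lemma~3.7) as a black box, so there is no in-paper proof to compare against. Judged on its own, your part~(1) is complete and correct: the reduction to $s_b^*s_a=\delta_{b,a}p_{F_a}$ via $s_\beta^*=s_\gamma^*s_b^*$, the case $a=b$ from relation~(iii) with $\beta=\eword$, and the case $a\neq b$ from $p_{Z_a}p_{Z_b}=p_\emptyset=0$ together with relation~(ii) all check out (you derive $s_a^*s_b=0$ rather than $s_b^*s_a=0$, but the two are interchangeable since $a,b$ are arbitrary distinct letters).

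Part~(2) follows the natural route (generators first, then Boolean closure), but two points need repair. First, the assertion that $r(\,\cdot\,,\alpha)$ commutes with complement is false as stated: $r(\osf\setminus A,\alpha)=\{x:\alpha x\in\osf\setminus A\}=F_\alpha\setminus r(A,\alpha)$, which is in general a proper subset of $\osf\setminus r(A,\alpha)$. The conclusion survives, but only because of the extra identity $s_\alpha p_{F_\alpha}=s_\alpha s_\alpha^*s_\alpha=s_\alpha$, which gives
\begin{equation*}
p_{\osf\setminus A}\,s_\alpha=(1-p_A)s_\alpha=s_\alpha\bigl(1-p_{r(A,\alpha)}\bigr)=s_\alpha p_{F_\alpha}\,p_{\osf\setminus r(A,\alpha)}=s_\alpha p_{F_\alpha\setminus r(A,\alpha)}=s_\alpha p_{r(\osf\setminus A,\alpha)};
\end{equation*}
you should make this absorption explicit rather than appeal to a commutation that does not hold set-theoretically. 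Second, the base case on a generator $A=C(\mu,\nu)$ is only sketched: ``iterating part~(1) letter by letter'' along $\nu$ and $\alpha$ requires, at each step, moving a projection of the form $p_{F_a}$ past the next generator $s_{\alpha_{i+1}}$, which is itself an instance of the identity being proved (for the set $F_a=C(a,\eword)$). This is not a fatal circularity --- one can handle it by writing $p_{F_a}s_c=s_a^*s_as_c=s_a^*s_{ac}$ and setting up the induction on $|\nu|+|\alpha|$ carefully, or by first proving the special case $s_\nu^*s_\alpha\in\{0,\;p_{F_\nu}s_{\alpha'},\;s_{\nu'}^*p_{F_\alpha}\}$ separately --- but as written the key computation that identifies the result with $s_\alpha p_{r(C(\mu,\nu),\alpha)}$ is asserted rather than carried out, and that identification (matching the algebraic simplification against the set $\{x:\alpha x\in C(\mu,\nu)\}$) is exactly where the content of the lemma lies.
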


By \cite{BGGV}, $\ualgshift$ has a $\Z$-grading given by 
    \[\ualgshift_n = \vecspan_R\{s_\alpha p_A s_\beta^* : \alpha,\beta \in \lang,\ A\in\TCB \ \mbox{and} \ |\alpha|-|\beta|=n\}, n \in \Z.\]
It is also graded by the free group on the alphabet. This grading arises from the isomorphism of the algebra with a certain partial skew group ring, as stated below.

\begin{theorem}[\cite{BGGV},Theorem~5.9]\label{thm:set-theoretic-partial-action}
Let $\osf$ be a subshift. Then, $\ualgshift\cong\udalgshift\rtimes_{\tau}\F$ via an isomorphism $\Phi$ that sends $s_a$ to $1_a\delta_a$ and $s^*_a$ to $1_{a^{-1}}\delta_{a^{-1}}$.
\end{theorem}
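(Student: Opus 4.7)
The plan is to produce the isomorphism $\Phi$ via the universal property of $\ualgshift$ and then build an inverse $\Psi$ by hand, using the concrete structure of the partial skew group ring. Throughout, $\F$ denotes the free group on the alphabet $\alf$, with identity $e$.

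First I would construct the partial action $\tau = (\{D_g\}_{g\in\F},\{\tau_g\}_{g\in\F})$ of $\F$ on the commutative diagonal algebra $\udalgshift$ (generated by the idempotents $\{p_A\}_{A\in\TCB}$). For each $a\in\alf$, I set $D_a$ to be the ideal of $\udalgshift$ generated by $p_{Z_a}$, and $D_{a^{-1}}$ the ideal generated by $p_{F_a}$, and define $\tau_a\colon D_{a^{-1}}\to D_a$ as the unique map with $\tau_a(p_{r(A,a)})=p_{Z_a}p_A$ for $A\in\TCB$ with $A\subseteq Z_a$; Lemma~\ref{chucrute}(2) shows this assignment is consistent. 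Setting $\tau_{a^{-1}}=\tau_a^{-1}$, one extends to reduced words $g=a_1^{\epsilon_1}\cdots a_n^{\epsilon_n}\in\F$ by composition along the appropriate ideals, checking that the cocycle identities defining a partial action are satisfied. Because $\F$ is free, this bookkeeping amounts to verifying, at each letter, that $\tau_{a}$ maps $D_{a^{-1}}\cap D_{gh}$ isomorphically onto $D_a\cap D_{agh}$, a routine manipulation using the identities $Z_{\alpha a}=r(Z_\alpha,a)$ and $F_{a\alpha}=r(F_\alpha,a)$.

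Next I would define $\Phi\colon\ualgshift\to\udalgshift\rtimes_\tau\F$ by invoking the universal property of $\ualgshift$ on the candidate images
\[
\Phi(p_A)=p_A\delta_e,\qquad \Phi(s_a)=1_a\delta_a,\qquad \Phi(s_a^*)=1_{a^{-1}}\delta_{a^{-1}},
\]
where $1_g$ denotes the unit of $D_g$. Relation (i) of Definition~\ref{universal properties} is immediate from the fact that $A\mapsto p_A\delta_e$ is a Boolean algebra morphism. For (ii), the product $(1_a\delta_a)(1_{a^{-1}}\delta_{a^{-1}})(1_a\delta_a)$ collapses to $1_a\delta_a$ by the multiplication rule of the partial skew group ring, since $\tau_a$ is an isomorphism between $D_{a^{-1}}$ and $D_a$. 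The key verification is relation (iii): expanding $s_\beta s_\alpha^* s_\alpha s_\beta^*$ in the skew group ring and applying the $\tau$-action letter by letter produces the idempotent $p_{C(\alpha,\beta)}\delta_e$; this uses that $\tau_\alpha^{-1}(p_{Z_\alpha}p_{Z_\beta})=p_{C(\alpha,\beta)}\cdot \text{(something in }D_{\alpha^{-1}})$, which follows from $C(\alpha,\beta)=r(Z_\beta,\alpha)$ and Lemma~\ref{chucrute}.

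For the inverse $\Psi$, I would define it on the natural spanning set of $\udalgshift\rtimes_\tau\F$, sending $a_g\delta_g\in D_g\delta_g$ to $a_g s_g\in\ualgshift$, where for a reduced word $g=a_1^{\epsilon_1}\cdots a_n^{\epsilon_n}$ one sets $s_g:=s_{a_1}^{\epsilon_1}\cdots s_{a_n}^{\epsilon_n}$ (with the convention $s_a^{+1}=s_a$, $s_a^{-1}=s_a^*$). The verification splits into two pieces: well-definedness on $D_g\delta_g$ (which requires that $a_g s_g$ depends only on $a_g$, handled by showing $p_{r(A,\alpha)}s_\alpha^*=s_\alpha^* p_A$ and its analogues collapse any rewriting), and $R$-linear extension respecting the twisted convolution product, which is where the multiplication rule $(a_g\delta_g)(b_h\delta_h)=\tau_g(\tau_{g^{-1}}(a_g)b_h)\delta_{gh}$ must be matched against the multiplication in $\ualgshift$. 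The nontrivial identities collapse to the Boolean relations of (i) and to Lemma~\ref{chucrute}(2), so this step reduces to a careful but routine induction on the length of the reduced words involved. One then checks $\Phi\circ\Psi$ and $\Psi\circ\Phi$ on generators, which is immediate.

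I expect the main obstacle to be the simultaneous bookkeeping in constructing $\tau$ as an honest partial action of $\F$ (particularly the compatibility of $\tau_g$ with the inclusion of $D_g\cap D_{gh}$), together with the well-definedness of $\Psi$ at every cancellation of $s_a s_a^*$ or $s_a^* s_a$ occurring when reducing words. Both reduce, after some reformulation, to the single identity $s_a^* p_A s_a = p_{r(A,a)}\cdot p_{F_a}$ and its companions, so the heart of the proof is really the content of Lemma~\ref{chucrute} promoted from single letters to arbitrary reduced words.
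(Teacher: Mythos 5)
This statement is quoted from \cite{BGGV} (Theorem~5.9); the present paper gives no proof of it, so your proposal can only be compared with the construction in that reference, and your outline is essentially that construction: build the set-theoretic/algebraic partial action of the free group $\F$ on the diagonal $\udalgshift$, obtain $\Phi$ from the universal property of $\ualgshift$ by checking relations (i)--(iii) on the candidate images, and invert it by sending $a_g\delta_g\mapsto a_g s_g$. The overall architecture is sound, and your computation of relation (iii) via $\tau_\beta(p_{F_\alpha\cap F_\beta})=p_{C(\alpha,\beta)}$ is the right heart of the matter. Two small corrections: the auxiliary identities you invoke are mis-stated --- one has $r(F_\alpha,a)=\{x:ax\in F_\alpha\}=\{x:\alpha a x\in\osf\}=F_{\alpha a}$, not $F_{a\alpha}$, and $Z_{\alpha a}=r(Z_\alpha,a)$ is false in general (for $\alpha=b\neq a$ the right-hand side is empty); the identities actually needed when composing the $\tau_{a_i}$ are of the form $r(Z_{a\gamma},a)=Z_\gamma\cap F_a$ and $r(F_\alpha,a)=F_{\alpha a}$. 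Also note that your $\Psi$ implicitly uses the identification of $\udalgshift$ with the diagonal subalgebra of $\ualgshift$ via $1_A\mapsto p_A$, which deserves a sentence (it holds because the relations in (i) are exactly the Boolean-algebra relations satisfied in $\udalgshift$). With those repairs the argument goes through and is the standard one.
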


Another important result regarding $\ualgshift$ is the so-called Reduction Theorem (which was proved in \cite{reductiontheoremofsubshifts}). We recall this result below, after providing the necessary definitions.

\begin{definition}[\cite{reductiontheoremofsubshifts}, Definitions 4.1 and 4.2]\label{cyclexit} Let $\osf$ be a subshift, $\alpha\in \lang\setminus \{w\}$, and $\emptyset \neq A\in \TCB$. The pair $(A,\alpha)$ is called  a \emph{cycle} if $A\subseteq r(A,\alpha)$.
We say that a cycle $(A,\alpha)$ has an \emph{exit} if $A\neq \{\alpha^\infty\}$. Otherwise, in case $A=\{\alpha^\infty\}$, we say that $(A,\alpha)$ is a \emph{cycle without exit}.
\end{definition}

\begin{definition}[\cite{reductiontheoremofsubshifts}, Definition 4.4]\label{mini}
Let $\osf$ be a subshift. We say that a cycle without exit $(A,\alpha)$ is \emph{minimal} if there is no element $\beta\in \lang$, with $1\leq |\beta|<|\alpha|$, such that $\alpha=\beta^k$ for some $k\geq 2$.
\end{definition}

\begin{theorem}[\cite{reductiontheoremofsubshifts}, Reduction Theorem]\label{reduction theorem}  Let $\osf$ be a subshift and $x\in \ualgshift$ be a non-zero element. Then, there exists $\mu,\nu\in \ualgshift$ such that  $\mu x\nu\neq 0$ and either:
\begin{enumerate}[\hspace{.3cm} \rm (1)]
    \item $\mu x\nu=\gamma p_D$ with $D\in \TCB$, or
    \item $\mu x\nu=\gamma_1 p_A +\sum\limits_{i=2}^k \gamma_i s_{\beta^{q_i}}p_A$, where $(A,\beta)$ is a minimal cycle without exit, $q_i\in \N\setminus \{0\}$, and $0\neq \gamma_i\in R$.
\end{enumerate}

\end{theorem}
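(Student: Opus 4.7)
The plan is to combine the normal form for elements of $\ualgshift$ with the $\F$-grading coming from Theorem~\ref{thm:set-theoretic-partial-action} and the manipulation rules of Lemma~\ref{chucrute}, reducing $x$ by systematic left and right multiplications until it lies in one of the two stated shapes. Using Definition~\ref{universal properties}(iii) and Lemma~\ref{chucrute}, I first write
\[
x = \sum_{i=1}^N \gamma_i\, s_{\alpha_i}\, p_{A_i}\, s_{\beta_i}^*,
\]
with $\gamma_i \in R\setminus\{0\}$, $\alpha_i,\beta_i\in\lang$, and $A_i\in\TCB$ non-empty. Grouping terms using the Boolean structure of $\TCB$, I assume the pairs $(\alpha_i,\beta_i)$ are pairwise distinct.

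Next, I exploit the grading. Via the isomorphism $\Phi$ of Theorem~\ref{thm:set-theoretic-partial-action}, each monomial $s_{\alpha_i}p_{A_i}s_{\beta_i}^*$ is homogeneous of degree $\alpha_i\beta_i^{-1}\in\F$. Pick an index $i_0$ with $(\alpha_{i_0},\beta_{i_0})$ extremal in an appropriate order on the support (for instance, of maximal total length). Left-multiplying by $s_{\alpha_{i_0}}^*$ and right-multiplying by $s_{\beta_{i_0}}$, the relations of Definition~\ref{universal properties} combined with Lemma~\ref{chucrute} collapse the distinguished term to the diagonal element $\gamma_{i_0}\, p_{F_{\alpha_{i_0}} \cap A_{i_0} \cap F_{\beta_{i_0}}}$, while every other term either vanishes (when initial letters disagree, by Lemma~\ref{chucrute}(1)) or becomes a monomial of strictly smaller combined length. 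Choosing a further idempotent $p_D\in\TCB$ inside the non-empty set $F_{\alpha_{i_0}}\cap A_{i_0}\cap F_{\beta_{i_0}}$ keeps the distinguished term non-zero, and iterating this procedure drives the residual terms down.

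Two outcomes are then possible. If the iteration terminates with every term collapsed, the final product is $\mu x\nu = \gamma p_D$ for some $\gamma\in R$ and $D\in\TCB$, giving case~(1). Otherwise, after finitely many steps the residual terms stop shortening, and are forced into a rigid shape: they all sit over a common idempotent $p_A$ (coming from the final distinguished diagonal) and have the form $\gamma_i\, s_{\beta^{q_i}}\, p_A$ for a common word $\beta$ and positive integers $q_i$. The obstruction to further reduction translates to $A \subseteq r(A,\beta)$, so $(A,\beta)$ is a cycle in the sense of Definition~\ref{cyclexit}; the impossibility of creating an exit by further multiplication forces $A=\{\beta^\infty\}$, so the cycle has no exit. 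Replacing $\beta$ by its primitive root if necessary and reindexing the $q_i$ yields case~(2) with $(A,\beta)$ a minimal cycle without exit. The main technical obstacle, I expect, lies precisely here: carefully tracking, through iterated applications of Lemma~\ref{chucrute}, how Boolean combinations of $F_\alpha$'s, $Z_\alpha$'s, and relative ranges evolve, and showing that any persistent non-diagonal behavior is forced by a single periodic orbit $\beta^\infty$ rather than by a more intricate overlap of shorter periodicities.
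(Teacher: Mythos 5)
First, note that the paper does not prove this statement at all: it is quoted verbatim from the reference \cite{reductiontheoremofsubshifts} (the Reduction Theorem is Theorem~4.9 there, an imported tool), so there is no in-paper proof to compare against. Your proposal must therefore stand on its own as a proof, and as written it has genuine gaps rather than being a complete argument.

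The outline (normal form, graded monomials, multiply by $s_{\alpha_{i_0}}^*$ and $s_{\beta_{i_0}}$, iterate) is the right general shape, but the two load-bearing steps are asserted rather than proved. (i) The termination of the iteration: after one multiplication the surviving terms are no longer in the original normal form with distinct pairs $(\alpha_i,\beta_i)$, products such as $s_{\beta_i}^*s_{\beta_{i_0}}$ produce monomials of \emph{negative} degree when $\beta_i$ is a proper prefix of $\beta_{i_0}$ (Lemma~\ref{chucrute}(1) iterated), and you give no well-founded measure that strictly decreases; moreover the idempotent $p_D$ you insert to keep the distinguished diagonal term alive can interact with the other terms, so ``strictly smaller combined length'' is not established. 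The standard fix is to choose $\beta_{i_0}$ of \emph{minimal} length (not maximal total length) and to track the $\Z$-degree of each summand separately. (ii) The derivation of case~(2) is the entire content of the theorem and is missing: you must show that when the nonzero-degree terms cannot be killed, one can shrink $D$ so that all of them become powers of a \emph{single} word $\beta$ with $D\subseteq r(D,\beta)$, and then run the ``use the exit'' argument --- if $(D,\beta)$ is a cycle with an exit, i.e.\ $D\neq\{\beta^\infty\}$, one multiplies by a suitable $p_{D'}$ with $\emptyset\neq D'\subsetneq D$ separating $\beta^\infty$ from another point of $D$ to annihilate the terms $s_{\beta^{q_i}}p_D$ while preserving the degree-zero part, landing back in case~(1). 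Only the failure of this step, $D=\{\beta^\infty\}$, yields case~(2), and one still has to clear negative powers (the statement has only $q_i>0$) and verify $\gamma_1\neq 0$. You explicitly flag this as ``the main technical obstacle,'' and indeed it is precisely the part that is not done; without it the proposal is a plausible plan, not a proof.
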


%\section{$\ualgshift$ is nondegenerate}

%First recall that an algebra $B$ is nondegenerate if $bBb=0$ for $b\in B$ implies that $b=0$.

%\begin{proposition} Let $\osf$ be a subshift and $R$ be a field. Then $\ualgshift$ is nondegenerate.
%\end{proposition}

%\begin{proof} Let $x\in \ualgshift$ and suppose that $x\neq 0$. We show that $x\ualgshift x\neq 0$. Following Theorem \ref{reduction theorem}, there exist $\nu,\mu\in \ualgshift$ such that $0\neq \mu x\nu$, and $\mu x \nu=\gamma p_D$ with $D\in \TCB$ or $\mu x\nu=\gamma_1 p_A+\sum\limits_{i=2}^k\gamma_i s_{\beta^{q_i}}p_A$. Since $R$ is a field, and by Theorem \ref{thm:set-theoretic-partial-action}, it follows that $(\mu x \nu)^2\neq 0$, that is, $\mu x \nu \mu x \nu\neq 0$. From this last inequality we get that $x\nu \mu x \neq 0$, and therefore $x\ualgshift x \neq 0$.
%\end{proof}

\section{Minimal left ideals in $\ualgshift$}

In this section, we characterize, up to isomorphism, the minimal left ideals of $\ualgshift$ (see Corollary~\ref{laVuelta}). To begin, we show a couple of results on the structure of left ideals of $\ualgshift$. As in the previous section, unless otherwise stated, $R$ stands for a commutative unital ring.

\begin{proposition}\label{leftideal} Let $X$ be a subshift. Then,
\begin{enumerate}
\item For each $x\in \ualgshift$, $\ualgshift x$ is a left ideal of $\ualgshift$.
\item If $0\neq I\subseteq \ualgshift$ is a left minimal ideal then there exists some $x\in \ualgshift$ such that $I=\ualgshift x$.
\item For all $A,B\in \TCB$ with $\emptyset\neq  B\subsetneq A$, it holds that $0\neq \ualgshift p_B\subsetneq \ualgshift p_A$.
\end{enumerate}
\end{proposition}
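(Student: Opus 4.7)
My plan is to treat the three parts essentially independently, with parts (1) and (2) being formal and (3) requiring a small computation inside the Boolean algebra of relations.

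For (1), I would simply verify the defining properties of a left ideal: for $y \in \ualgshift$ and $sx \in \ualgshift x$ we have $y(sx) = (ys)x \in \ualgshift x$, and closure under addition is immediate. For (2), I would pick any nonzero $x \in I$ (which exists since $I \neq 0$); because $\ualgshift$ is unital, $x = 1 \cdot x \in \ualgshift x \subseteq I$, so $\ualgshift x$ is a nonzero left ideal contained in $I$, and the minimality of $I$ forces $I = \ualgshift x$.

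Part (3) is the substantive point. The plan is first to establish the inclusion $\ualgshift p_B \subseteq \ualgshift p_A$: since $B \cap A = B$, Definition~\ref{universal properties}(i) gives $p_B = p_B p_A$, so $rp_B = rp_B p_A \in \ualgshift p_A$ for every $r \in \ualgshift$. To see $\ualgshift p_B$ is nonzero, I would note that $p_B = p_B^2 \in \ualgshift p_B$, and invoke the nondegeneracy $p_B \neq 0$, which can be read off from Theorem~\ref{thm:set-theoretic-partial-action}: under the isomorphism $\Phi$ the projection $p_B$ corresponds to the characteristic function of $B$ sitting in the identity fiber of $\udalgshift\rtimes_\tau \F$, and this is nonzero precisely when $B \neq \emptyset$.

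For the strict containment, I plan to argue by contradiction. Assuming $\ualgshift p_A = \ualgshift p_B$, one has $p_A = y p_B$ for some $y \in \ualgshift$. Since $\TCB$ is a Boolean algebra, $C := A \setminus B$ belongs to $\TCB$ and is nonempty (as $B \subsetneq A$). Multiplying $p_A = yp_B$ on the right by $p_C$ and repeatedly applying Definition~\ref{universal properties}(i) yields
\[
p_C \;=\; p_{A\cap C} \;=\; p_A p_C \;=\; y\, p_B p_C \;=\; y\, p_{B\cap C} \;=\; y\, p_\emptyset \;=\; 0,
\]
contradicting the nondegeneracy established above. The only genuinely delicate input is this nondegeneracy $p_D \neq 0$ for $\emptyset \neq D \in \TCB$; once that is granted, the rest of (3) is just bookkeeping in the Boolean algebra of relations.
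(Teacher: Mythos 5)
Your proposal is correct and follows essentially the same route as the paper: parts (1) and (2) are the same formal argument, and for (3) you prove the inclusion via $p_B = p_B p_A$ and derive the strictness by multiplying a hypothetical identity $p_A = y p_B$ on the right by $p_C$ with $C = B^c \cap A$, exactly as the paper does. Your extra care in justifying the nondegeneracy $p_D \neq 0$ for $\emptyset \neq D \in \TCB$ via Theorem~\ref{thm:set-theoretic-partial-action} is a point the paper leaves implicit, but it does not change the argument.
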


\begin{proof} The first item is clear, so we begin with the proof of the second one. 

Let $0\neq I\subseteq \ualgshift$ be a left minimal ideal, and let $0\neq x\in I$. Then, $\ualgshift x$ is a left ideal of $\ualgshift$ which is contained in $I$. Since $\ualgshift x\neq 0$ and $I$ is minimal, we obtain that $I=\ualgshift x$.

Next, we prove the third item. Notice that $0\neq \ualgshift p_B$, since $p_B\in \ualgshift p_B$. Furthermore, for any $z \in \ualgshift$ we have $zp_B = zp_Bp_A$ (since $B \subseteq A$), which implies that $\ualgshift p_B \subseteq \ualgshift p_A$. It remains to prove that the inclusion is proper.
Suppose that $\ualgshift p_B= \ualgshift p_A$. This implies the existence of some $z \in \ualgshift$ such that $zp_B = p_A$. Define $C=B^c\cap A$, which is an element of $\TCB$. Observe that $p_Bp_C = 0$ since $B \cap C = \emptyset$, and $p_Ap_C = p_C \neq 0$ because $C \neq \emptyset$. Therefore, $0=zp_Bp_C=p_Ap_C\neq 0$, a contradiction.
We conclude that $\ualgshift p_B \neq \ualgshift p_A$.

\end{proof}

\begin{proposition}\label{prop1} Let $X$ be a subshift. Then, for each $A\in \TCB$ with $|A|>1$, $\ualgshift p_A$ is a left ideal of $\ualgshift$ which is not minimal.
\end{proposition}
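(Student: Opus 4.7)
The plan is to leverage items (1) and (3) of the previous proposition. The first assertion, that $\ualgshift p_A$ is a left ideal, is immediate from Proposition~\ref{leftideal}(1) applied to $x = p_A$. The content of the statement is therefore the non-minimality claim, and the natural strategy is to exhibit a nontrivial proper sub-left-ideal inside $\ualgshift p_A$.

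In view of Proposition~\ref{leftideal}(3), it suffices to produce some $B \in \TCB$ with $\emptyset \neq B \subsetneq A$: such a $B$ immediately gives $0 \neq \ualgshift p_B \subsetneq \ualgshift p_A$, which shows that $\ualgshift p_A$ is not minimal. So the entire problem is reduced to a combinatorial separation statement about the Boolean algebra $\TCB$: any subset of $\osf$ that belongs to $\TCB$ and has more than one element admits a proper non-empty subset that still lies in $\TCB$.

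For this, pick two distinct points $x, y \in A$. Since $x \neq y$ in $\alf^{\N}$, there is an index $n \geq 0$ with $x_n \neq y_n$. Set $\alpha := x_{0,n} \in \lang$; then $x \in Z_\alpha$ while $y \notin Z_\alpha$, because any element of $Z_\alpha$ must have its $n$-th entry equal to $x_n$. Now define $B := A \cap Z_\alpha$. Since $Z_\alpha \in \TCB$ (by Definition~\ref{diachuvoso}) and $A \in \TCB$, we have $B \in \TCB$; moreover $x \in B$ so $B \neq \emptyset$, and $y \in A \setminus B$ so $B \subsetneq A$.

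There is no real obstacle here: the only thing one must notice is that cylinder sets separate points of $\osf$, which is immediate from their definition. Applying Proposition~\ref{leftideal}(3) to this $B$ concludes the proof.
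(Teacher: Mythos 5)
Your proof is correct and follows essentially the same route as the paper: both arguments separate two distinct points of $A$ by a cylinder set $Z_\alpha$ built from an initial segment of one of them, form $B = A \cap Z_\alpha$, and invoke the third item of Proposition~\ref{leftideal} to conclude non-minimality.
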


\begin{proof} Using the first item of Proposition~\ref{leftideal}, we obtain that $\ualgshift p_A$ is a left ideal of $\ualgshift$. Given that $|A|>1$, there exists $x,y\in A$ with $x\neq y$. Write $x=x_1x_2...$ and $y=y_1y_2...$, where $x_i,y_i\in \alf$ for each $i$. Since $x\neq y$, there exists an index $j\in\nn$ such that $x_j\neq y_j$. Let $B=Z_{x_{(1,j)}}\cap A$. Since $x\in B$ and $y\notin B$, we have that $\emptyset \neq B\subsetneq A$. Consequently, using the third
item of Proposition~\ref{leftideal}, we obtain that $\ualgshift p_A$ is not minimal.
\end{proof}

Given the above, we are interested in the left ideals of the form $\ualgshift p_A$, where $|A|=1$. Before proceeding, we review the definition of a line path below. 

\begin{definition}[\cite{Irreduciblerepresentations}, Definition 4.1]\label{linepath} Let $\osf$ be a subshift. We say that an element $p=a_0a_1a_2a_3...\in \osf$ is a line path if $Z_{a_{0}}=\{p\}$ and, for every $\beta\in \lang$ and $k\in \N$, we have that $\beta^\infty\neq a_ka_{k+1}a_{k+2}\ldots$. We denote  by $\mathcal{P_\osf}$ the set of all the elements $A\in \TCB$ such that $A=\{p\}$, for some line path $p\in \osf$.
\end{definition}

Line paths are associated with minimal left ideals. In fact, it is proved in \cite[Corollary~4.8]{Irreduciblerepresentations} that, for each $A\in \mathcal{P_\osf}$, $\ualgshift p_A$ is a minimal left ideal. However, there are other minimal ideals of the form $\ualgshift p_B$, with $B\notin \mathcal{P_\osf}$. To show this, we need the following definition.

% The lemma presented below allow us to establish a dichotomy for subshifts: they either lack line paths entirely or possess an infinite number of them. 

%\begin{lemma}\label{different entries}  Let $\osf$ be a subshift and $p=p_0p_1p_2...$ be a line path. Then, for each $n\in \nn$, the element $\sigma^{n}(p)$ is a line path. Moreover, $p_i\neq p_j$ for each $i\neq j$.
%\end{lemma}
%\begin{proof}

%Let $p=p_0p_1p_2...$ be a line path. We leave the proof that $\sigma^{n}(p)$ is a line path to the reader. Suppose that $p_i=p_j$ for some $i<j$. Since $\sigma^i(p)=p_ip_{i+1}...$ and $\sigma^j(p)=p_jp_{j+1}...$ are line paths, we have that $\{p_ip_{i+1}...\}=Z_{p_i}=Z_{p_j}=\{p_jp_{j+1}...\}$ and hence $\sigma^i(p)=\sigma^j(p)$. Let $\beta=p_ip_{i+1}...p_{j-1}$ and note that  $\sigma^i(p)=\beta \sigma^j(p)$. Since $\sigma^{i}(p)=\sigma^j(p)$, we conclude that $\sigma^i(p)=\beta^n \sigma^i(p)$ for each $n\in \N$ and hence $\sigma^i(p)=\beta^\infty$. This implies that $p$ is not a line path, a contradiction. Therefore, $p_i\neq p_j$ for each $i\neq j$.
%\end{proof}

%As an immediate consequence of the above lemma, we have the following corollary.

%\begin{corollary}\label{finite alphabets have no line paths}
%If $\mathcal{P}$ is nonempty then $\mathcal{P}$ contains infinitely many elements. If $\alf$ is a finite alphabet, then $\osf$ does not contain line paths.
%\end{corollary}

\begin{definition}\label{chuvacalor}
Let $\osf$ be a subshift. Denote by $\mathcal{Q_\osf}$ the set of all the single point sets $\{p\}\in \TCB$  such that $p\neq \alpha \beta^\infty$ for each $\alpha, \beta\in \lang$.
We call an element of $\mathcal{Q_\osf}$ an irrational path.
\end{definition}
  
Notice that $\mathcal{P_\osf}\subseteq \mathcal{Q_\osf}$ but, in general, $\mathcal{P_\osf}\neq \mathcal{Q_\osf}$ as we show in the next example.

\begin{example}\label{irrational paths are greater than line paths}
    Let $\alf=\{a,b,c\}$ be an alphabet with 3 letters, and let $x\in \alf^\infty$ be the element
    $$x=bcb^2cb^3cb^4c....$$
    
 Define $$\osf=\{a^\infty,  b^\infty, ax, cx\}\cup \{\sigma^n(x):n\in \N\},$$ which is a subshift.
 
This subshift has no line paths since the cardinalities of $Z_a, Z_b$ and $Z_c$ are all greater than one. However, 
 $\mathcal{Q_\osf}\neq \emptyset$ since $\TCB \ni C(c,a)=\{ax\}$ is an element of $\mathcal{Q_\osf}$.
    
\end{example}
The lemma presented below allows us to establish a dichotomy for the set of irrational paths in a subshift: either $\mathcal{Q_\osf}$ is empty or it has infinitely many elements. 

\begin{lemma}\label{many elements in Q} Let $\osf$ be a subshift. If $A=\{p\}\in \mathcal{Q_\osf}$ then $\{\sigma^{n}(p)\}\in \mathcal{Q_\osf}$ for each $n\in \N$ and, moreover, $\sigma^n(p)\neq \sigma^m(p)$ for each $m\neq n$. Consequently, if $\mathcal{Q_\osf}$ is nonempty then it contains infinitely many elements.     
\end{lemma}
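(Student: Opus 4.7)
The plan is to address each of the three assertions in the lemma in turn. First, to establish that $\{\sigma^n(p)\}\in\TCB$, I would observe that $\{\sigma^n(p)\} = r(\{p\}, p_{1,n})$, since the equation $p_{1,n}\, x = p$ has the unique solution $x = \sigma^n(p)$, which lies in $\osf$ by shift-invariance. Closure of $\TCB$ under the relative range operation $r(\cdot,\alpha)$ (which is implicit in the relation $s_\alpha^* p_A = p_{r(A,\alpha)} s_\alpha^*$ from Lemma~\ref{chucrute}(2)) then yields $\{\sigma^n(p)\}\in\TCB$. Equivalently, one can argue by induction on $n$, using $\{\sigma^{n+1}(p)\} = r(\{\sigma^n(p)\}, p_{n+1})$ at the inductive step.

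Next, to show that $\{\sigma^n(p)\}\in\mathcal{Q_\osf}$, suppose for contradiction that $\sigma^n(p) = \alpha\beta^\infty$ for some $\alpha,\beta\in\lang$. Then $p = p_{1,n}\sigma^n(p) = (p_{1,n}\alpha)\beta^\infty$, and since $p\in\osf$ the prefix $p_{1,n}\alpha$ of $p$ lies in $\lang$, so $p$ itself would be expressible in the forbidden form, contradicting $\{p\}\in\mathcal{Q_\osf}$. For the distinctness claim, assume without loss of generality that $n<m$ and suppose that $\sigma^n(p)=\sigma^m(p)$. Comparing coordinates gives $p_{n+k}=p_{m+k}$ for every $k\geq 1$, hence $p = p_{1,n}(p_{n+1,m})^\infty$; since $p_{1,n}, p_{n+1,m}\in\lang$ and $p_{n+1,m}\neq\eword$, this again contradicts $\{p\}\in\mathcal{Q_\osf}$.

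The concluding statement then follows immediately: given any $\{p\}\in\mathcal{Q_\osf}$, the sets $\{\sigma^n(p)\}$ for $n\in\N$ constitute an infinite family of pairwise distinct elements of $\mathcal{Q_\osf}$. I expect the only nontrivial point to be the invocation of closure of $\TCB$ under $r(\cdot,\alpha)$ in the first step; the remaining parts are direct manipulations of the defining condition for an irrational path, in each case rewriting the supposed periodic expression for a shift of $p$ as a periodic expression for $p$ itself.
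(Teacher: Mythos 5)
Your proposal is correct and follows essentially the same route as the paper: membership in $\TCB$ via the relative range $r(\{p\},\alpha)$ with $\alpha$ a prefix of $p$, membership in $\mathcal{Q_\osf}$ by prepending the prefix to a hypothetical eventually periodic tail, and distinctness by showing $\sigma^n(p)=\sigma^m(p)$ forces $p$ to be eventually periodic (the paper phrases this as $z=bz\Rightarrow z=b^\infty$ rather than coordinatewise, which is the same argument). No gaps.
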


\begin{proof} Let $A=\{p\}\in \mathcal{Q_\osf}$, where $p=p_0p_1p_2...\in \osf$, $n\in \N$, and define $a=p_0p_1..p_{n-1}$. Notice that $\{\sigma^n(p)\}=r(A,a)\in \TCB$. Suppose that $\{\sigma^n(p)\}\notin \mathcal{Q_\osf}$, that is, $\sigma^n(p)=\alpha \beta ^\infty$ for some $\alpha, \beta \in \lang$. Then, $p=a \alpha \beta^\infty$ and hence $\{p\}\notin \mathcal{Q_\osf}$, a contradiction. % since $\{p\}\in \mathcal{Q_\osf}$. 
Therefore, $\{\sigma^n(p)\}\in \mathcal{Q_\osf}$. 

Now, suppose that $\sigma^n(p)=\sigma^m(p)$ for some $m\neq n$, and suppose, without loss of generality, that $n<m$. Let $z=\sigma^n(p)=\sigma^m(p)$ and let $b=p_np_{n+1}...p_{m-1}$. Then,  $z=\sigma^n(p)=b\sigma^m(p)=bz$ and consequently $z=b^\infty$. Hence, $p=p_0p_1...p_{n-1}z=p_0p_1...p_{n-1}b^\infty$, which implies that $\{p\}\notin \mathcal{Q_\osf}$, a contradiction. Therefore, $\sigma^n(p)\neq \sigma^m(p)$. 
\end{proof}

%There is proved in Corollary 4.8 of \cite{Irreduciblerepresentations} that $\ualgshift p_A$ is a minimal left ideal, for each $A\in \mathcal{P}$. In the next lemma we prove that it is also true for $A\in \mathcal{Q}$.

Recall that a nonzero idempotent $e$ in an algebra $A$ is minimal if  $eAe$ is a division ring, see \cite[Definition~30.1]{SocleBook}. Furthermore, if $A$ is semi-prime then $L$ is a minimal left ideal of $A$ if and only if $L = A e$, where $e$ is a minimal idempotent in $A$, see  \cite[Proposition~30.6]{SocleBook}. We will use these concepts below, to prove that left ideals associated with irrational paths are minimal. From now on, we assume that the base ring $R$ is a field, so that $\ualgshift$ is semiprime (in fact, it is enough to ask that $R$ has no zero divisors to obtain semiprimeness, see [\cite[Corollary~5.6]{reductiontheoremofsubshifts}).

\begin{proposition}
\label{elements of Q generate minimal ideals} Let $\osf$ be a subshift and $R$ be a field. Then, for each $A\in \mathcal{Q_\osf}$, $\ualgshift p_A$ is a left minimal ideal of $\ualgshift$.    
\end{proposition}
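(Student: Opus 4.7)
The plan is to apply the standard criterion \cite[Proposition~30.6]{SocleBook}: in a semiprime algebra, a principal left ideal $\ualgshift e$ generated by an idempotent $e$ is minimal if and only if $e$ is a minimal idempotent, that is, $e\ualgshift e$ is a division ring. Since $R$ is a field, $\ualgshift$ is semiprime by \cite[Corollary~5.6]{reductiontheoremofsubshifts}, and the relations in Definition~\ref{universal properties} give $p_A p_A = p_{A\cap A}= p_A$, so $p_A$ is an idempotent. Hence it suffices to establish the equality
\[p_A\ualgshift p_A \;=\; R\cdot p_A,\]
which is obviously a field (isomorphic to $R$) with unit $p_A$.

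To do this I would exploit the $\Z$-grading: since $p_A$ is homogeneous of degree $0$, the subalgebra $p_A\ualgshift p_A$ is itself $\Z$-graded, so it is enough to analyze $p_A\ualgshift_n p_A$ for each $n\in\Z$. On a typical spanning generator $s_\alpha p_B s_\beta^*$ with $|\alpha|-|\beta|=n$, two applications of Lemma~\ref{chucrute}(2) give
\[p_A\, s_\alpha p_B s_\beta^*\, p_A \;=\; s_\alpha\, p_{r(A,\alpha)\,\cap\, B\,\cap\, r(A,\beta)}\, s_\beta^*.\]
Writing $A=\{p\}$, the relative range $r(A,\gamma)$ is the singleton $\{\sigma^{|\gamma|}(p)\}$ when $\gamma$ is a prefix of $p$, and is empty otherwise. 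Consequently the only surviving generators are those for which both $\alpha$ and $\beta$ are prefixes of $p$.

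The crucial step is the vanishing of the off-diagonal components. For $n\neq 0$ we have $|\alpha|\neq|\beta|$, and Lemma~\ref{many elements in Q}---the point at which irrationality of $p$ is used---yields $\sigma^{|\alpha|}(p)\neq \sigma^{|\beta|}(p)$, so $r(A,\alpha)\cap r(A,\beta)=\emptyset$ and the whole expression is zero. Therefore $p_A\ualgshift_n p_A=0$ for every $n\neq 0$. For the degree-$0$ part, $|\alpha|=|\beta|$ combined with both being prefixes of $p$ forces $\alpha=\beta$, and whenever the middle factor is nonzero one pulls $p_A$ back through Lemma~\ref{chucrute}(2) to obtain $p_A s_\alpha s_\alpha^* = p_A p_{Z_\alpha} = p_A$ (using $A\subseteq Z_\alpha$). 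Thus $p_A\ualgshift_0 p_A\subseteq R\cdot p_A$, and together with the higher-degree vanishing this gives $p_A\ualgshift p_A=R\cdot p_A$, completing the argument.

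I expect the main obstacle to be precisely the vanishing of the off-diagonal graded components, which depends sharply on the hypothesis $A\in\mathcal{Q_\osf}$: the injectivity of $n\mapsto \sigma^n(p)$ is exactly what distinguishes irrational paths from ultimately periodic ones of the form $\alpha\beta^\infty$, and is the geometric content of Lemma~\ref{many elements in Q}. Once that is in place, every other manipulation is a routine application of the defining relations and Lemma~\ref{chucrute}.
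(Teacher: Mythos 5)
Your proposal is correct and follows essentially the same route as the paper: reduce to showing $p_A\ualgshift p_A=Rp_A$ on the spanning generators $s_\alpha p_B s_\beta^*$, use Lemma~\ref{chucrute}(2) to push $p_A$ inward, observe that nonvanishing forces $\alpha,\beta$ to be prefixes of $p$, and invoke Lemma~\ref{many elements in Q} to rule out $|\alpha|\neq|\beta|$ before concluding via \cite[Proposition~30.6]{SocleBook}. Your explicit organization by graded components is only a cosmetic repackaging of the paper's direct case analysis.
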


\begin{proof} Fix an $A=\{p\}\in \mathcal{Q_\osf}$, and let $\alpha, \beta \in \lang$ and $B\in \TCB$ be such that $p_As_\alpha p_B s_\beta^* p_A\neq 0$. Then $\alpha$ is an initial path of $p$, since otherwise $p_A s_\alpha=0$. Similarly, $\beta$ is an initial path of $p$. Write $p=\alpha x$ and $p=\beta y$, where $x,y \in \osf$. 

Then, since
$$0\neq p_As_\alpha p_B s_\beta^* p_A=s_\alpha p_{r(A,\alpha)}p_B p_{r(A,\beta)} s_\beta^*,$$ we have that $\emptyset\neq r(A,\alpha)\cap B \cap r(A,\beta)$ and hence $r(A,\alpha)=r(A,\alpha)\cap B\cap  r(A,\beta)=r(A,\beta)$. Notice that $r(A,\alpha)=\{x\}=\{\sigma^{|\alpha|}(p)\}$ and $r(A,\beta)=\{y
\}=\{\sigma^{|\beta|}(p)\}$, so that $\sigma^{|\alpha|}(p)=\sigma^{|\beta|}(p)$. From Lemma~\ref{many elements in Q}, we get that $|\alpha|=|\beta|$ and, consequently, $\alpha=\beta$. Hence, 
$$p_As_\alpha p_B s_\beta^* p_A=s_\alpha p_{r(A,\alpha)} p_B p_{r(A,\beta)} s_\beta^* = s_\alpha p_{r(A,\alpha)}p_{r(A,\alpha)} s_\alpha^*= $$
$$=p_A s_\alpha s_\alpha^* p_A= 
p_A p_{Z_\alpha} p_A=p_A.$$

Therefore, as elements of the form $s_\alpha p_B s_\beta^*$ generate $\ualgshift$ (see the comment about the $\Z$-grading of $\ualgshift$ below Lemma~\ref{chucrute}), we obtain that $p_A \ualgshift p_A=R p_A$. This means that $p_A$ is a minimal idempotent, and so, by \cite[Proposition~30.6]{SocleBook}, we conclude that $\ualgshift p_A$ is a minimal left ideal.
\end{proof}

Next, we completely characterize minimal left ideals associated with elements of $\mathcal U$.

\begin{proposition}\label{prop2}
    Let $\osf$ be a subshift, $R$ be a field, and $A\in \TCB$. Then, $\ualgshift p_A$ is a left minimal ideal if and only if $A\in \mathcal{Q_\osf}$.
\end{proposition}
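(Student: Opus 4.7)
I plan to prove this by double implication. The $(\Leftarrow)$ direction is exactly Proposition~\ref{elements of Q generate minimal ideals}, so only $(\Rightarrow)$ needs work, which I will handle by contrapositive: assuming $A \notin \mathcal{Q_\osf}$, I show $\ualgshift p_A$ is not minimal. If $A=\emptyset$ then $\ualgshift p_A=0$ is not minimal by convention; if $|A|\geq 2$, Proposition~\ref{prop1} applies. The remaining case is $A=\{p\}$ with $p=\alpha\beta^\infty$ for some $\alpha,\beta\in\lang$, and for this case my plan is to show that $p_A$ is not a minimal idempotent, then apply \cite[Proposition~30.6]{SocleBook} (recalling that $\ualgshift$ is semiprime since $R$ is a field).

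Set $A':=r(A,\alpha)=\{\beta^\infty\}\in\TCB$ and introduce
$$u:=s_{\alpha\beta}\,p_{A'}\,s_\alpha^*,\qquad v:=s_\alpha\,p_{A'}\,s_{\alpha\beta}^*.$$
The first step is a direct computation showing (i) $u,v\in p_A\ualgshift p_A$; (ii) $uv=vu=p_A$, so that $u$ is a unit of the corner with $u^{-1}=v$; and (iii) $u$ is $\Z$-homogeneous of degree $|\beta|>0$, with $u^k=s_{\alpha\beta^k}p_{A'}s_\alpha^*$ and $v^k=s_\alpha p_{A'}s_{\alpha\beta^k}^*$ nonzero for every $k\geq 1$ (nonvanishing of $v^k$ follows from $u^kv^k=p_A\neq 0$). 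These reduce, via Lemma~\ref{chucrute}, to the identity $s_\gamma\,p_{r(B,\gamma)}\,s_\gamma^*=p_{B\cap Z_\gamma}$ (obtained by combining $p_Bs_\gamma=s_\gamma p_{r(B,\gamma)}$ with $s_\gamma s_\gamma^*=p_{Z_\gamma}$) applied to $\gamma\in\{\alpha,\alpha\beta\}$ together with the inclusions $A\subseteq Z_\alpha\cap Z_{\alpha\beta}$ and $A'\subseteq F_\alpha\cap F_{\alpha\beta}$.

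To show the corner is not a division ring, I will prove that $p_A+u$ is not invertible in it. Suppose $(p_A+u)\,w=p_A$ for some $w=\sum_{n\in\Z}w_n\in p_A\ualgshift p_A$ with finite $\Z$-homogeneous decomposition. Comparing degrees gives the recurrence
$$w_n+u\,w_{n-|\beta|}=\delta_{n,0}\,p_A\qquad(n\in\Z).$$
Iterating upward from $n=0$, the invertibility of $u$ forces $w_0=0$, for otherwise $w_{k|\beta|}=(-u)^kw_0\neq 0$ for every $k\geq 0$, contradicting finite support. Then the $n=0$ relation yields $w_{-|\beta|}=v\neq 0$, and iterating downward produces $w_{-k|\beta|}=(-1)^{k-1}v^k\neq 0$ for every $k\geq 1$, again a contradiction. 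Hence $p_A+u$ is not invertible, $p_A\ualgshift p_A$ is not a division ring, and $\ualgshift p_A$ is not a minimal left ideal.

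The main obstacle I anticipate is the bookkeeping for (i)--(iii)---in particular the cancellations that make $uv$ and $vu$ both collapse to $p_A$---but once the identity $s_\gamma p_{r(B,\gamma)}s_\gamma^*=p_{B\cap Z_\gamma}$ is in hand, both that step and the subsequent graded non-invertibility argument for $p_A+u$ are short and essentially mechanical.
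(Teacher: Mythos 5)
Your proof is correct, and for the essential case $A=\{p\}$ with $p=\alpha\beta^\infty$ it takes a genuinely different route from the paper. The paper first treats $|\alpha|=0$ by invoking \cite[Lemma~5.3]{reductiontheoremofsubshifts} to identify the corner $p_A\ualgshift p_A$ with $R[x,x^{-1}]$, pulls a proper nonzero two-sided ideal of the corner up to a proper nonzero left ideal of $\ualgshift$ sitting inside $\ualgshift p_A$, and then handles $|\alpha|>0$ by a separate conjugation-by-$s_\alpha$ argument transporting a proper left ideal from $\ualgshift p_{\{\beta^\infty\}}$ to $\ualgshift p_A$. You instead work entirely inside the corner, uniformly in $\alpha$: your $u=s_{\alpha\beta}p_{A'}s_\alpha^*$ is a homogeneous unit of positive degree $|\beta|$ (it is, in effect, the image of $x$ under the paper's Laurent identification, conjugated by $s_\alpha$), and your graded recurrence showing $p_A+u$ has no right inverse re-derives by hand the non-invertibility of $1+x$ in $R[x,x^{-1}]$; you then conclude via the minimal-idempotent criterion rather than by exhibiting a proper left ideal. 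I checked the computations: $r(A,\alpha)=r(A,\alpha\beta)=A'=\{\beta^\infty\}$, the identity $s_\gamma p_{r(B,\gamma)}s_\gamma^*=p_{B\cap Z_\gamma}$ together with $A\subseteq Z_\alpha\cap Z_{\alpha\beta}$ and $A'\subseteq F_\alpha\cap F_{\alpha\beta}$ does give $uv=vu=p_A$, the corner inherits the $\Z$-grading because $p_A$ is homogeneous of degree $0$, and the upward/downward iterations are sound (note that only the implication ``$\ualgshift p_A$ minimal $\Rightarrow$ $p_A\ualgshift p_A$ a division ring'' is needed, which is the elementary direction of \cite[Proposition~30.6]{SocleBook} and holds for the fixed idempotent $p_A$). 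What each approach buys: the paper's argument reuses an established structural isomorphism and produces an explicit proper left ideal, which is more informative about the ideal lattice; yours is self-contained modulo Lemma~\ref{chucrute} and the grading, avoids the case split on $|\alpha|$, and isolates the single algebraic obstruction (a homogeneous unit of nonzero degree) responsible for non-minimality.
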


\begin{proof} 

Let $A\in \mathcal{Q_\osf}$. Then, by Proposition~\ref{elements of Q generate minimal ideals}, $\ualgshift p_A$ is minimal. 

To prove the converse, let $A\in \TCB$ and assume that $\ualgshift p_A$ is minimal. If $|A|>1$ then, by Proposition~\ref{prop1}, $\ualgshift p_A$ is not minimal, a contradiction. Hence, $|A|=1$ and we write $A=\{p\}$.
Suppose that $A\notin \mathcal{Q_\osf}$. Then, $p=\alpha\mu^\infty$, with $\mu, \alpha \in \lang$. 

Suppose first that $|\alpha|=0$, that is, $p=\mu^\infty$. Let $\beta\in \lang$ be the element of minimal length such that $\mu=\beta^k$ for some $k\in \N$. Then, $\mu^\infty=p=\beta^\infty$, and the pair $(A,\beta)$ is a minimal cycle without exit (see Definition~\ref{mini}). Applying \cite[Lemma~5.3]{reductiontheoremofsubshifts}, we obtain that $p_A \ualgshift p_A$ and $R[x,x^{-1}]$ are isomorphic algebras, where $R[x,x^{-1}]$ denotes the Laurent polynomials ring. Since $R[x,x^{-1}]$ contains nontrivial (two-sided) ideals, there exists a (two-sided) ideal $0\neq I\subseteq p_A \ualgshift p_A$ with $I\neq p_A \ualgshift p_A$. Let $0\neq x\in I$ and $J=\ualgshift x$, which is a left ideal of $\ualgshift$. Notice that $0\neq J\subseteq \ualgshift p_A$, since $x=p_Axp_A$. Aiming to show that $J\neq \ualgshift p_A$, we suppose that $J=\ualgshift p_A$. Then, there exists $y\in \ualgshift$ such that $yx=p_A$. Since  $x=p_Axp_A$, we have that $yp_Axp_A=p_A$. Multiplying this equality on the left by $p_A$, we get that $(p_Ayp_A)(p_Axp_A)=p_A$, and so $p_Ayp_Ax=p_A$. Since $p_Ayp_A\in p_A\ualgshift p_A$, $x\in I$, and $I$ is a two-sided ideal of $p_A \ualgshift p_A$, we conclude that $p_A=p_Ayp_Ax\in I$. Hence, $p_A \ualgshift p_A\subseteq I$, which is impossible since $I\subsetneq p_A \ualgshift p_A$. So, we get that $J\neq \ualgshift p_A$ and, consequently, $\ualgshift p_A$ is not minimal.

Next, suppose that $p=\alpha\mu^\infty$, with $|\alpha|>0$. Define $B=\{\mu^\infty\}$ and notice that $r(A,\alpha)=\{\mu^\infty\}$. Then, $p_A s_\alpha=s_\alpha p_{r(A,\alpha)}=s_\alpha p_B$ and, similarly, $s_\alpha^* p_A=p_B s_\alpha^*$. From the previous paragraph we get that $\ualgshift p_B$ is not minimal. Let $J$ be a left ideal of $\ualgshift$ such that 
$0\neq J\subsetneq \ualgshift p_B$.
Define $I=Js_\alpha^*$, which is a left ideal of $\ualgshift$. Notice that $I=Js_\alpha^* =Jp_B s_\alpha^* =Js_\alpha^* p_A$, so that $I\subseteq \ualgshift p_A$. Let $0\neq x\in J$. Then, $x=xp_B=xp_B p_{F_\alpha}=xp_B s_\alpha^*s_\alpha$ and therefore $0\neq xp_B s_\alpha^*$. So, $I\neq 0$. Aiming to show that $I\neq \ualgshift p_A$, suppose that $I=\ualgshift p_A$. Then, $p_A\in I=Js_\alpha^*$ and there is $y\in J$ such that  $p_A=y p_B s_\alpha^*$. Multiplying this equality on the right by $s_\alpha$ we get that $p_A s_\alpha=yp_Bs_\alpha^* s_\alpha=yp_B$ and, since $p_A s_\alpha=s_\alpha p_B$, we conclude that $s_\alpha p_B=y p_B$. Now, multiplying this equality on the left by $s_\alpha^*$, we get that $s_\alpha^* s_\alpha p_B=s_\alpha^*y p_B$, which is an element of $J$. As $s_\alpha^* s_\alpha p_B=p_B$, we obtain that $p_B\in J$ and, consequently, $J=\ualgshift p_B$, which is a contradiction. Therefore, $I\neq \ualgshift p_A$  and $\ualgshift p_A$ is not minimal.
\end{proof}

Our next goal is to completely characterize the left minimal ideals of $\ualgshift$. For this, we need the following lemma.

\begin{lemma}\label{minimallemma}
Let $0\neq x\in \ualgshift$ be such that $\ualgshift x$ is a left minimal ideal of $\ualgshift$. Then, for each $\mu',\nu'\in \ualgshift
$ with $\mu'x\nu'\neq 0$, $\ualgshift x$ and $\ualgshift \mu' x \nu'$ are isomorphic as left $\ualgshift$-modules (and consequently $\ualgshift \mu' x \nu'$ is also minimal).
\end{lemma}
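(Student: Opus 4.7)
The plan is to exploit the standard fact that, for a simple left module, right multiplication by an element is either zero or an isomorphism onto its image, together with the observation that a nonzero submodule of a simple module is the whole module. Write $L := \ualgshift x$ for brevity; by hypothesis $L$ is simple as a left $\ualgshift$-module.

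The first step is to show $L \cong L\nu'$. I would consider the map $\rho \colon L \to L\nu'$ defined by $y \mapsto y\nu'$, which is a surjective homomorphism of left $\ualgshift$-modules onto the left ideal $L\nu' \subseteq \ualgshift$. Its kernel is a $\ualgshift$-submodule of $L$, so by minimality it is either $0$ or $L$. From $\mu' x \nu' = \mu'(x\nu') \neq 0$ one immediately obtains $x\nu' \neq 0$; since $x = 1\cdot x \in L$, this rules out $\ker \rho = L$. Hence $\ker \rho = 0$, $\rho$ is an isomorphism, and $L\nu'$ is itself a minimal left ideal.

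The second step is to identify $\ualgshift \mu' x \nu'$ with $L\nu'$. Since $\mu' \in \ualgshift$, we have $\mu' x \nu' = \mu'(x\nu') \in \ualgshift\cdot(x\nu') \subseteq L\nu'$, so $\ualgshift \mu' x \nu' \subseteq L\nu'$. Because $\mu' x \nu' \neq 0$ and $\ualgshift$ is unital, $\ualgshift \mu' x \nu'$ is a nonzero left ideal contained in the minimal left ideal $L\nu'$ and must therefore equal $L\nu'$. Chaining the two steps yields $\ualgshift \mu' x \nu' = L\nu' \cong L = \ualgshift x$, which is the desired conclusion (with minimality of $\ualgshift \mu' x \nu'$ thrown in for free). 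I do not anticipate a serious obstacle; the argument is a routine two-step application of the Schur-style isomorphism for minimal ideals, and the only subtle point to flag is the implication $\mu' x \nu' \neq 0 \Rightarrow x\nu' \neq 0$, which is exactly what prevents $\rho$ from being the zero map and makes the whole chain work.
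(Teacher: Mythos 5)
Your proof is correct and follows essentially the same route as the paper: both arguments use right multiplication by $\nu'$ as a surjective left-module homomorphism whose kernel must vanish by simplicity, together with the fact that a nonzero left ideal contained in a minimal one equals it. The only difference is cosmetic — the paper first identifies $\ualgshift\mu'x$ with $\ualgshift x$ and then multiplies by $\nu'$, whereas you multiply $\ualgshift x$ by $\nu'$ first and then absorb $\mu'$ — so no further comment is needed.
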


 \begin{proof}

 First, observe that $\ualgshift \mu' x=\ualgshift x$. Indeed, this follows from the minimality of $\ualgshift x$ and the fact that $0\neq \ualgshift \mu' x$ is a left $\ualgshift$ ideal contained in $\ualgshift x$.

%Notice that $0\neq \ualgshift \mu' x$ (since $\mu'x\neq 0$) and moreover $\ualgshift \mu' x$ is a left $\ualgshift$ ideal contained in $\ualgshift x$. Since $\ualgshift x$ is minimal then $\ualgshift \mu' x=\ualgshift x$.

Define $\varphi:\ualgshift \mu' x\rightarrow \ualgshift \mu' x \nu'$ by $\varphi(a)=a \nu'$, for each $a\in \ualgshift \mu' x$. Notice that this map is a surjective left $\ualgshift$-module homomorphism.  %$R$-linear and surjective, and moreover for each $a\in \ualgshift$ and $b\in \ualgshift \mu' x$ it holds that $\varphi(a b)=a \varphi(b)$, so that $\varphi$ is a left $\ualgshift$-module homomorphism. 
We show that $\varphi$ is injective. Since $\varphi(\mu' x)=\mu' x \nu'\neq 0$, we have that $ker(\varphi)\neq \ualgshift \mu' x$. Moreover, $Ker(\varphi)$ is a left $\ualgshift$ ideal contained in $\ualgshift \mu' x =\ualgshift x$. By the minimality of  $\ualgshift x$, we conclude that  $ker(\varphi)=0$. Therefore, $\varphi$ is an $\ualgshift$-left module isomorphism.
\end{proof}

\begin{proposition}\label{minimal isomorphism} Let $\osf$ be a subshift, $R$ be a field, and let $x\in \ualgshift$ be such that $\ualgshift x$ is a minimal left ideal. Then, $\ualgshift x$ is isomorphic (as a left $\ualgshift$-module) to $\ualgshift p_D$ for some $D\in \mathcal{Q_\osf}$.
\end{proposition}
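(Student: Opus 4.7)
The plan is to apply the Reduction Theorem (Theorem~\ref{reduction theorem}) to $x$, producing $\mu,\nu\in\ualgshift$ with $\mu x\nu\neq 0$ of one of two standard forms, and then exploit Lemma~\ref{minimallemma}, which guarantees that $\ualgshift \mu x\nu\cong \ualgshift x$ as left $\ualgshift$-modules, so in particular $\ualgshift \mu x\nu$ is also a minimal left ideal. It then suffices to identify $\ualgshift \mu x\nu$ with $\ualgshift p_D$ for some $D\in\mathcal{Q_\osf}$.

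In the first case of the Reduction Theorem, $\mu x\nu=\gamma p_D$ for some $D\in\TCB$ and scalar $\gamma\in R$; since $R$ is a field and $\mu x\nu\neq 0$, the scalar $\gamma$ is invertible, and so $\ualgshift\mu x\nu=\ualgshift p_D$. Because this left ideal is minimal, Proposition~\ref{prop2} forces $D\in\mathcal{Q_\osf}$, yielding the desired conclusion.

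The main work is ruling out the second case, in which $y:=\mu x\nu=\gamma_1 p_A+\sum_{i=2}^k\gamma_i s_{\beta^{q_i}}p_A$ with $(A,\beta)$ a minimal cycle without exit (so $A=\{\beta^\infty\}$) and $k\geq 2$. Here the key ingredient, already invoked in the proof of Proposition~\ref{prop2}, is the isomorphism $p_A\ualgshift p_A\cong R[t,t^{-1}]$ (via $s_\beta p_A\leftrightarrow t$) supplied by \cite[Lemma~5.3]{reductiontheoremofsubshifts}. Under it, $y$ corresponds to the Laurent polynomial $P(t)=\gamma_1+\sum_{i=2}^k\gamma_i t^{q_i}$, which is not a unit in $R[t,t^{-1}]$ because it has nonzero terms in at least two distinct degrees.

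To derive a contradiction from the hypothetical minimality of $\ualgshift y$, I would consider $y^2$. Since $R[t,t^{-1}]$ is a domain and $P(t)\neq 0$, $y^2\neq 0$, so $0\neq\ualgshift y^2\subseteq\ualgshift y$; minimality then forces $\ualgshift y^2=\ualgshift y$, and hence $y=ay^2$ for some $a\in\ualgshift$. Sandwiching with $p_A$ on both sides (using $p_A y=yp_A=y$) gives $y=(p_A a p_A)y^2$, which under the isomorphism reads $P(t)=Q(t)P(t)^2$ for some $Q(t)\in R[t,t^{-1}]$. The domain property then yields $Q(t)P(t)=1$, making $P(t)$ a unit---a contradiction. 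The step I expect to be the main obstacle is precisely this contradiction in Case~(2); once it is in place, Case~(1) combined with Proposition~\ref{prop2} completes the argument.
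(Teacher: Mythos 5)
Your proposal is correct, and its skeleton coincides with the paper's: apply the Reduction Theorem, use Lemma~\ref{minimallemma} to transfer minimality to $\ualgshift\mu x\nu$, dispose of Case~(1) via Proposition~\ref{prop2}, and rule out Case~(2) using the isomorphism $p_A\ualgshift p_A\cong R[t,t^{-1}]$. The one genuine difference is how the Case~(2) contradiction is extracted from that isomorphism. The paper shows that $I=p_A\ualgshift z$ is a nonzero \emph{minimal} two-sided ideal of $p_A\ualgshift p_A$ (minimality being inherited from $\ualgshift z$ via the correspondence $J\mapsto\ualgshift J$), and then contradicts the fact that $R[t,t^{-1}]$ has no minimal ideals. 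You instead exploit minimality through the relation $\ualgshift y^2=\ualgshift y$, writing $y=ay^2$, sandwiching with $p_A$ to get $P=QP^2$ in $R[t,t^{-1}]$, and concluding that the visibly non-unit Laurent polynomial $P$ would be a unit. Your route is a bit more elementary and self-contained: it needs only that $R[t,t^{-1}]$ is a domain whose units are the monomials $\gamma t^n$, rather than the (equivalent but less immediate) fact that it has no minimal ideals, and it avoids verifying that $p_A\ualgshift z$ is a two-sided ideal. The only point worth making explicit is that in Case~(2) the exponents $q_i\geq 1$ are distinct and $\gamma_1\neq 0$, so $P$ really does have nonzero coefficients in at least two degrees; with that said, the argument is complete.
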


\begin{proof} Applying the Reduction Theorem (Theorem~\ref{reduction theorem}) for $x$, we obtain $\mu,\nu \in \ualgshift$ such that 
$\mu x \nu\neq 0$ and $\mu x \nu=\lambda p_D$, where $D\in \TCB$ 
and $\lambda \in R$, or $\mu x\nu=\gamma_1 p_A +\sum\limits_{i=2}^k \gamma_i s_{\beta^{q_i}}p_A$, 
where $(A,\beta)$ is a minimal cycle without exit (so $A=\{\beta^\infty\}$), $q_i\in \N\setminus \{0\}$, and $0\neq \gamma_i\in R$.

By Lemma \ref{minimallemma}, $\ualgshift x$ and $\ualgshift \mu x \nu$ are isomorphic as left $\ualgshift$-modules. So, if  $\mu x \nu=\lambda p_D$ then $\ualgshift x$ is isomorphic to $\ualgshift \lambda p_D$. As $R$ is a field, $\ualgshift \lambda p_D = \ualgshift p_D$ and, by Proposition~\ref{prop2}, we get that $D\in \mathcal{Q_\osf}$.

Next, we show that the second possibility for $\mu x \nu$ does not happen. For this, suppose that $\mu x\nu=\gamma_1 p_A +\sum\limits_{i=2}^k \gamma_i s_{\beta^{q_i}}p_A$ and, to simplify notation, let $z=\mu x \nu$. 

From [\cite{reductiontheoremofsubshifts}, Lemma 5.3] there exists an $R$-isomorphism $\psi:p_A \ualgshift p_A \rightarrow R[x,x^{-1}]$. Let 
$I=p_A \ualgshift z$, which is a nonzero two-sided ideal of $p_A \ualgshift p_A$ (to see that it is an ideal on the right, use that  $z=p_A z p_A$ since $(A,\beta)$ is a minimal cycle without exit, and that $R[x,x^{-1}]$ is commutative).

Next, we show that $I$ is a minimal ideal of $p_A \ualgshift p_A$. Let $0\neq J$ be a $p_A \ualgshift p_A$ ideal such that $J\subseteq I$. Then, $0\neq \ualgshift J$ is an $\ualgshift$ left ideal contained in $\ualgshift z$. Since $\ualgshift z$ is minimal (by Lemma \ref{minimallemma}) we obtain that  $\ualgshift J=\ualgshift z$. Consequently, $p_A \ualgshift J=p_A\ualgshift z = I$ and, since $p_A \ualgshift  J= p_A \ualgshift p_A J=J$, we conclude that $J=I$. Therefore, $I$ is minimal. 

The minimality of $I$ implies that $\varphi(I)$ is a minimal ideal of $R[x,x^{-1}]$, which is a contradiction, since $R[x,x^{-1}]$ has no minimal ideals. Therefore, the case $\mu x\nu=\gamma_1 p_A +\sum\limits_{i=2}^k \gamma_i s_{\beta^{q_i}}p_A$, 
is not possible, as desired.

%Notice that $\ualgshift\lambda p_D=\ualgshift p_D$ since $R$ is a field. From Lemma \ref{minimallemma} we get that $\ualgshift p_D$ is minimal and from Proposition~\ref{prop2} we get that $D\in \mathcal{Q_\osf}$.
\end{proof}

\begin{corollary}\label{laVuelta} Let $\osf$ be a subshift, $R$ be a field and $I$ be a left ideal in $\ualgshift$. Then, $I$ is minimal if and only if it is isomorphic (as a left $\ualgshift$-submodule) to $\ualgshift p_A$, for some $A\in \mathcal{Q_\osf}$.
\end{corollary}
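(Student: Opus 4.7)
The plan is to recognize that this corollary is essentially a bookkeeping step that combines the work already done in Propositions~\ref{leftideal}, \ref{prop2}, \ref{elements of Q generate minimal ideals}, and \ref{minimal isomorphism}. So I would just carefully verify both implications separately and cite the appropriate results.

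For the forward direction, suppose $I$ is a minimal left ideal of $\ualgshift$. The second item of Proposition~\ref{leftideal} gives $I = \ualgshift x$ for some nonzero $x \in \ualgshift$. Then Proposition~\ref{minimal isomorphism} (which is the substantive result here, obtained via the Reduction Theorem and the analysis of minimal cycles without exit using the isomorphism with $R[x,x^{-1}]$) provides an $A \in \mathcal{Q_\osf}$ and a left $\ualgshift$-module isomorphism $\ualgshift x \cong \ualgshift p_A$. This is exactly the desired conclusion.

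For the converse, suppose $I \cong \ualgshift p_A$ as left $\ualgshift$-modules for some $A \in \mathcal{Q_\osf}$. By Proposition~\ref{elements of Q generate minimal ideals} (alternatively, one direction of Proposition~\ref{prop2}), $\ualgshift p_A$ is a minimal left ideal. Since minimality of a left ideal is equivalent to simplicity of the corresponding left module, and simplicity is invariant under module isomorphism, $I$ is likewise simple as a left $\ualgshift$-module, hence a minimal left ideal.

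I do not expect any real obstacle; the only thing to be mildly careful about is that ``minimal left ideal'' here is a statement about $I$ as a submodule of $\ualgshift$, and the isomorphism in Proposition~\ref{minimal isomorphism} is of abstract left $\ualgshift$-modules (not necessarily as submodules of $\ualgshift$), so I should phrase the converse in terms of simplicity of the module to make it unambiguous that minimality transfers along the isomorphism.
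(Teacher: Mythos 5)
Your proposal is correct and follows essentially the same route as the paper: the forward direction via Proposition~\ref{leftideal}(2) and Proposition~\ref{minimal isomorphism}, and the converse via Proposition~\ref{prop2} together with the observation that simplicity transfers along a left-module isomorphism. Your extra remark about phrasing minimality as simplicity of the module is a sensible clarification but does not change the argument.
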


\begin{proof} If $I$ is isomorphic as a left $\ualgshift$-ideal to $\ualgshift p_A$, for some $A\in \mathcal{Q_\osf}$, then it is minimal by Proposition~\ref{prop2}. For the other statement, suppose that $I$ is a left $\ualgshift$ minimal ideal. By Proposition~\ref{leftideal}, there exists $x\in \ualgshift$ such that $I=\ualgshift x$. The result now follows from Proposition~\ref{minimal isomorphism}.
\end{proof}

\section{The Socle of $\ualgshift$}
%Moreover, in Corollary~\ref{socle via line paths} and Corollary~\ref{socle characterization}, we describe the socle of $\ualgshift$ as the two sided ideal generated by certain idempotents.

Recall that the (left) socle of an algebra $B$, denoted by $Soc(B)$, is the sum of all the left minimal ideals in $B$ (hence $Soc(B)$ is also a left ideal). If $B$ is semiprime, then $Soc(B)$ coincides with the sum of all the right minimal ideals in $B$, as proved in \cite[Chapter~3, Proposition~4]{Lambek}). Hence, for semiprime algebras the socle is a two-sided ideal. Recall that we are assuming that $R$ is a field and so $\ualgshift$ is semiprime.

Our goal in this section is to characterize the socle of $\ualgshift$ as the sum of the minimal left ideals associated with elements of $\mathcal{Q_\osf}$. By Proposition~\ref{prop2}, we have that 
$$\sum\limits_{D\in \mathcal{Q_\osf}}\ualgshift p_D\subseteq Soc(\ualgshift).$$

To prove the reverse inclusion, we first need a lemma.

\begin{lemma}\label{parouAchuva!} Let $\osf$ be a subshift, $x\in \osf$, and $D=\{x\}\in \TCB$.
\begin{enumerate}
\item If $\beta \in \lang$ is such that $D\subseteq F_\beta$, then $\{\beta x \}\in \TCB$.
%\item For each $\alpha, \beta \in \lang$ and $A\in \TCB$ it holds that either $p_D s_\alpha p_A s_\beta^*=0$, or $x=\alpha y$ for some $y\in \osf$ and 
%$$p_D s_\alpha p_A s_\beta^*=s_\alpha s_\beta^* p_{\{\beta y\}}.$$ 
\item For each $\alpha, \beta \in \lang$ and $A\in \TCB$ such that $p_D s_\alpha p_A s_\beta^*\neq 0$, there exists $y\in \osf$ such that $x=\alpha y$, $\{y\}$ and $\{\beta y \}\in \TCB$, and 
$$p_D s_\alpha p_A s_\beta^*=s_\alpha s_\beta^* p_{\{\beta y\}}.$$ 

\item For each $\alpha, \beta \in \lang$ and $A\in \TCB$, if $s_\alpha p_A s_\beta^*p_D\neq 0$ then 
$s_\alpha p_A s_\beta^*p_D=s_\alpha s_\beta^* p_D.$

\end{enumerate}
\end{lemma}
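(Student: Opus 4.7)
The plan is to dispatch items (1)--(3) in order, with item (1) providing the set-theoretic input needed for the algebraic manipulations in item (2). Throughout, the essential tools are the commutation relations of Lemma~\ref{chucrute}(2), together with the elementary identities $r(\{x\},\alpha)=\{y\}$ when $x=\alpha y$ and $r(\{x\},\alpha)=\emptyset$ otherwise.

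For item (1), I would first prove the auxiliary statement that for every $B\in\TCB$ and $\beta\in\lang$, the set $\beta\star B:=\{\beta y:y\in B\cap F_\beta\}$ again belongs to $\TCB$. This is an induction on the Boolean complexity of $B$: on a generator $C(\gamma,\delta)$, a direct check gives $\beta\star C(\gamma,\delta)=C(\gamma,\beta\delta)$ when $\beta\delta\in\lang$, and $\emptyset$ otherwise; the injectivity of $y\mapsto \beta y$ on $F_\beta$ lets $\beta\star$ distribute over finite intersections and unions; and for complements one uses $\beta\star B^c=Z_\beta\setminus\beta\star B$ together with $Z_\beta=C(\omega,\beta)\in\TCB$. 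Specializing to $B=D$ and using $D\subseteq F_\beta$ gives $\{\beta x\}=\beta\star D\in\TCB$. I expect this bookkeeping with complements and with the intersection against $F_\beta$ to be the main technical obstacle.

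For item (2), Lemma~\ref{chucrute}(2) yields $p_Ds_\alpha=s_\alpha p_{r(D,\alpha)}$, so the hypothesis $p_Ds_\alpha p_As_\beta^*\neq 0$ forces $r(D,\alpha)\neq\emptyset$: thus $\alpha$ is a prefix of $x$, we may write $x=\alpha y$, and $\{y\}=r(D,\alpha)\in\TCB$. Substituting and using $p_{\{y\}}p_A=p_{\{y\}\cap A}$, the expression collapses to $s_\alpha p_{\{y\}}s_\beta^*$ (nonvanishing forces $y\in A$), which in turn forces $y\in F_\beta$, i.e., $\beta y\in\osf$, so item~(1) applies to give $\{\beta y\}\in\TCB$. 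A second application of Lemma~\ref{chucrute}(2), now with $A=\{\beta y\}$, yields
\[
s_\beta^*p_{\{\beta y\}}=p_{r(\{\beta y\},\beta)}s_\beta^*=p_{\{y\}}s_\beta^*,
\]
and rearranging gives $p_{\{y\}}s_\beta^*=s_\beta^*p_{\{\beta y\}}$, whence $s_\alpha p_{\{y\}}s_\beta^*=s_\alpha s_\beta^*p_{\{\beta y\}}$, as desired.

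Item (3) is the right-sided mirror of item (2) and is actually simpler, since no new set-membership in $\TCB$ is required (the set $D$ on the right-hand side is given in $\TCB$ from the outset). Lemma~\ref{chucrute}(2) yields $s_\beta^*p_D=p_{r(D,\beta)}s_\beta^*$, so the hypothesis forces $\beta$ to be a prefix of $x$; writing $x=\beta z$ with $\{z\}=r(D,\beta)\in\TCB$, the same identity rearranges to $p_{\{z\}}s_\beta^*=s_\beta^*p_D$. Then $p_A s_\beta^* p_D=p_Ap_{\{z\}}s_\beta^*=p_{A\cap\{z\}}s_\beta^*$ is nonzero only if $z\in A$, and consequently $s_\alpha p_As_\beta^*p_D=s_\alpha p_{\{z\}}s_\beta^*=s_\alpha s_\beta^*p_D$, completing the proof.
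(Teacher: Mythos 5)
Your proofs of items (2) and (3) are correct and essentially identical to the paper's: reduce via Lemma~\ref{chucrute}(2) to $s_\alpha p_{\{y\}}s_\beta^*$, observe that nonvanishing forces $y\in A$ and $y\in F_\beta$, and then commute $p_{\{y\}}$ past $s_\beta^*$ using $r(\{\beta y\},\beta)=\{y\}$. Where you genuinely diverge is item (1). The paper proves $\{\beta x\}\in\TCB$ by passing through the partial skew group ring isomorphism $\Phi$ of Theorem~\ref{thm:set-theoretic-partial-action}: it computes $\Phi(s_\beta p_D s_\beta^*)=1_{\{\beta x\}}\delta_0$, expands $1_{\{\beta x\}}$ as a finite $R$-combination of indicators of pairwise disjoint sets in $\TCB$, and notes that a singleton support forces exactly one summand to survive, so $\{\beta x\}$ itself lies in $\TCB$. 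You instead prove the purely set-theoretic statement that the concatenation operation $\beta\star B=\{\beta y:y\in B\cap F_\beta\}$ maps $\TCB$ into $\TCB$, by structural induction on a Boolean expression for $B$; the base case $\beta\star C(\gamma,\delta)=C(\gamma,\beta\delta)$ checks out (one uses that $\beta\delta x\in\osf$ already implies $\delta x\in\osf$), injectivity of $y\mapsto\beta y$ on $F_\beta$ handles unions and intersections, and $\beta\star(B^c)=Z_\beta\setminus\beta\star B$ handles complements. Both routes are valid. The paper's argument is shorter given that the partial-action machinery is already set up, and it cleverly exploits disjointification of indicator functions; yours is more elementary and self-contained, avoids Theorem~\ref{thm:set-theoretic-partial-action} entirely, and actually establishes the stronger fact that $\beta\star$ preserves all of $\TCB$, not just singletons.
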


\begin{proof} We begin with the first item. Let $\Phi$ be the isomorphism of Theorem~\ref{thm:set-theoretic-partial-action}. Notice that 
$$\Phi(s_\beta p_D s_\beta^*)=1_{\beta} \delta_{\beta} 1_D 1_{\beta^{-1}} \delta_{\beta^{-1}}=1_{\{\beta x\}}\delta_0.$$ Recall from \cite[Section~5]{BGGV} that $\udalgshift$ is generated by the characteristic functions of the sets $C(\alpha, \beta)$, with $\alpha, \beta \in \lang$. Consequently, $1_{\{\beta x\}}$ is a finite sum of the form $1_{\beta x}=\sum\limits_{i\in F} \lambda_i 1_{A_i}$, where $A_i \in \TCB$ and $F$ is a finite set. Moreover, since $\TCB$ is a Boolean algebra, we can suppose that $A_i\cap A_j=\emptyset$ for each $i\neq j$ (see \cite[Lemma~3.5]{BCGWLabel}). Since $\{\beta x\}$ is a set with only one element, and all the $A_i$ are disjoint, we obtain that $F$ is also a set with only one element, say $F=\{i\}$. Hence, $1_{\beta x}=\lambda_i 1_{A_i}$, which implies that $\lambda_i=1$ and $\{\beta x\}=A_i \in \TCB$.

Next, we prove the second item. Let $\alpha, \beta \in \lang$ and $A\in \TCB$. Notice that if $x\notin Z_\alpha$ then $p_Ds_\alpha=0$ and hence $p_D s_\alpha p_A s_\beta^*=0$. Therefore, if $p_D s_\alpha p_A s_\beta^*\neq 0$ then $x\in Z_\alpha$, say $x=\alpha y$ for some $y\in \osf$. 

Thus, $$p_D s_\alpha p_A s_\beta^*=p_{\{\alpha y\}}s_\alpha p_A s_\beta^*=s_\alpha p_{\{y\}}p_A s_\beta^*.$$ 
From the last term in the equality above, we obtain that $p_{\{y\}} p_A=0$ if $y\notin A$. Therefore, we may suppose that $y\in A$. Hence, 
$$s_\alpha p_{\{y\}}p_A s_\beta^*=s_\alpha p_{\{y\}}s_\beta^*.$$ 
In case that $y\notin F_\beta$, the above implies that $p_{\{y\}}s_\beta^*=0$, and so we may suppose that $y \in F_\beta$. Observe that $\{y\} \in \TCB$ since $\{y\}=r(\{\alpha y\},\alpha)$ and so, from the first item of this lemma, we get that $\{\beta y\} \in \TCB$. Thus, from the second item of Lemma~\ref{chucrute}, we obtain that $p_{\{y\}}s_\beta^*=s_\beta^* p_{\{\beta y\}}$ and hence $s_\alpha p_{\{y\}} s_\beta^*=s_\alpha s_\beta^* p_{\{\beta y\}}$. Thus, $p_Ds_\alpha p_A s_\beta^*=s_\alpha s_\beta^*p_{\{\beta y\}}$ as desired. 

To prove the third item, let $\alpha, \beta \in \lang$ and $A\in \TCB$ be such that $s_\alpha p_A s_\beta^*p_D\neq 0$. Since $0\neq s_\alpha p_A s_\beta^* p_D=s_\alpha p_A p_{r(D,\beta)} s_\beta^*$  we have that $r(D,\beta)\subseteq A$. Hence, $s_\alpha p_A p_{r(D,\beta)} s_\beta^*=s_\alpha p_{r(D,\beta)} s_\beta^*=s_\alpha s_\beta^* p_D$, where the last equality follows from the second item of Lemma~\ref{chucrute}.
    
\end{proof}

Next, we prove that the socle of a subshift algebra coincides with the sum of the minimal left ideals associated with elements of $\mathcal{Q_\osf}$.

\begin{theorem}\label{socle description} Let $\osf$ be a subshift and $R$ be a field. Then, $$Soc(\ualgshift)=\sum\limits_{A \in \mathcal{Q_\osf}}\ualgshift p_A.$$ 
\end{theorem}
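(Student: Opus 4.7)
The inclusion $\supseteq$ is immediate from Proposition~\ref{prop2}, so my plan focuses on the reverse inclusion. Write $J := \sum_{D \in \mathcal{Q_\osf}} \ualgshift p_D$. Since $Soc(\ualgshift)$ is by definition the sum of the minimal left ideals, it suffices to prove that every minimal left ideal $I \subseteq \ualgshift$ satisfies $I \subseteq J$. The strategy has two parts: (i) show that $J$ is in fact a two-sided ideal of $\ualgshift$; and (ii) for each minimal left ideal $I$, exploit Proposition~\ref{minimal isomorphism} to embed $I$ into the two-sided ideal $\ualgshift p_D \ualgshift$ for an appropriate $D \in \mathcal{Q_\osf}$, which then lies in $J$ by (i).

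For part (i), I would check closure under right multiplication. Fix $D=\{x\}\in \mathcal{Q_\osf}$ and a generator $b = s_\alpha p_B s_\beta^*$ of $\ualgshift$; by Lemma~\ref{parouAchuva!}(2), either $p_D b = 0$ or there exists $y\in \osf$ with $x=\alpha y$, $\{\beta y\}\in \TCB$, and $p_D b = s_\alpha s_\beta^* p_{\{\beta y\}}$. The key step is to verify $\{\beta y\}\in \mathcal{Q_\osf}$: by Lemma~\ref{many elements in Q} we already have $\{y\}=\{\sigma^{|\alpha|}(x)\}\in \mathcal{Q_\osf}$, and any hypothetical presentation $\beta y = \alpha''\mu^\infty$ with $\alpha'',\mu \in \lang$ and $\mu$ nonempty forces $y$ itself to have the form $\gamma\mu^\infty$ for some $\gamma\in \lang$ (split into the cases $|\alpha''|\leq |\beta|$ and $|\alpha''|>|\beta|$ and truncate the prefix $\beta$ from $\alpha''\mu^\infty$), contradicting $\{y\}\in \mathcal{Q_\osf}$. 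Hence $p_D b \in \ualgshift p_{\{\beta y\}} \subseteq J$; since such generators $b$ span $\ualgshift$ as an $R$-module and $J$ is already a left ideal, linearity gives $Jb \subseteq J$ for every $b\in \ualgshift$.

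For part (ii), fix a minimal left ideal $I$. By Proposition~\ref{minimal isomorphism} there is a left $\ualgshift$-module isomorphism $\psi:\ualgshift p_D \to I$ for some $D\in \mathcal{Q_\osf}$, and I set $e' := \psi(p_D)$. Using $\ualgshift$-linearity of $\psi$ together with $p_D^2 = p_D$, I obtain $e' = \psi(p_D \cdot p_D) = p_D e'$, and therefore $\ualgshift e' = \ualgshift p_D e' \subseteq \ualgshift p_D \ualgshift$. Surjectivity of $\psi$ yields $I = \ualgshift e'$, so $I \subseteq \ualgshift p_D \ualgshift$. Because $J$ is two-sided and $p_D \in \ualgshift p_D \subseteq J$, the two-sided ideal $\ualgshift p_D \ualgshift$ is contained in $J$; hence $I \subseteq J$, and summing over all minimal left ideals completes the proof.

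I expect the main obstacle to be the preservation of irrationality under prepending a prefix, used in part~(i): membership in $\mathcal{Q_\osf}$ excludes every decomposition into a prefix followed by a periodic tail, and one needs a brief case split to confirm that such a decomposition for $\beta y$ would descend to one for $y$. The remainder essentially assembles pieces already available: Proposition~\ref{minimal isomorphism} provides the classification of minimal left ideals up to isomorphism, and once $J$ is known to be two-sided the standard minimal-idempotent argument upgrades this isomorphism to an honest containment.
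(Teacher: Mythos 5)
Your proof is correct and rests on the same two pillars as the paper's own argument: Lemma~\ref{parouAchuva!}(2) combined with the observation that $\{\alpha y\}\in\mathcal{Q_\osf}$ forces $\{\beta y\}\in\mathcal{Q_\osf}$, and the left-module isomorphism of Proposition~\ref{minimal isomorphism} used to write a minimal left ideal as $\ualgshift e'$ with $e'=p_De'$. The only differences are organizational: you first establish that $\sum_{D\in\mathcal{Q_\osf}}\ualgshift p_D$ is a two-sided ideal and then invoke Proposition~\ref{minimal isomorphism} as a black box, whereas the paper re-enters that proposition's proof (via the Reduction Theorem) to produce $x'=\mu x$ explicitly and verifies $p_Dx'\in\sum_{D}\ualgshift p_D$ termwise on generators; you also spell out the prefix case-split showing that prepending $\beta$ preserves irrationality, a point the paper asserts without detail.
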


\begin{proof} Let $0\neq I$ be a left minimal ideal of $\ualgshift$. By Lemma~\ref{leftideal}, there exists $0\neq x\in \ualgshift$ such that $I=\ualgshift x$. For this $x$, let $\mu,\nu$ be as in the Reduction Theorem (Theorem~\ref{reduction theorem}) and use Lemma~\ref{minimallemma} to conclude that $\ualgshift \mu x \nu$ is minimal. Then, by the proof of Proposition~\ref{minimal isomorphism}, we get that $\mu x \nu=\lambda p_D$, for some $0\neq \lambda \in R$ and $D\in \TCB$. Notice that since $\ualgshift x$ is minimal, we have that $\ualgshift \mu x =\ualgshift x$. Define $x'=\mu x$  and let $\varphi :\ualgshift x' \rightarrow \ualgshift p_D$ be the ($\ualgshift$ left) isomorphism of the proof of Lemma \ref{minimallemma}, that is, $\varphi(z x')=z x' \nu=z \lambda p_D$. Observe that $\varphi(x')=\lambda p_D$ and, since $\varphi$ is a left $\ualgshift$-isomorphism, this implies that
$$x'=\varphi^{-1}(\varphi(x'))=\varphi^{-1}(\lambda p_D)=\varphi^{-1}( p_D\lambda p_D)=p_D \varphi^{-1}(\lambda p_D)=p_D x'.$$

Write $x'$ as a finite sum of the form $x'=\sum \limits \lambda_i s_{\alpha_i} p_{A_i} s_{\beta_i}^*$, where $A_i \in \TCB$, $\alpha_i, \beta_i \in \lang$, and $\lambda_i\in R$ for each $i$. From the second item of Lemma~\ref{parouAchuva!}, we obtain that $p_D x'$ has the form 
$$p_D x'=\sum \lambda_i s_{\alpha_i} s_{\beta_i}^* p_{\{\beta_i y_i\}},$$ where $D=\{\alpha_i y_i\}$ for each $i$ (notice that we can apply Lemma~\ref{parouAchuva!} because, as $\ualgshift p_D$ is minimal, $D$ is a one point set). Since $\{\alpha_i y_i\} \in \mathcal{Q_\osf}$ for each $i$, we have that $\{\beta_i y_i\} \in \mathcal{Q_\osf}$ for each $i$ and hence $$x'=p_D x'=\sum \limits s_{\alpha_i}s_{\beta_i}^*p_{\{\beta_i y_i\}}\in \sum \limits_{A\in \mathcal{Q_\osf}}\ualgshift p_A.$$ Consequently, we obtain that
$$I=\ualgshift x=\ualgshift x' \subseteq \sum \limits_{A\in \mathcal{Q_\osf}}\ualgshift p_A$$ and so
$$Soc(\ualgshift)\subseteq \sum\limits_{A \in \mathcal{Q_\osf}}\ualgshift p_A.$$

As we mentioned at the beginning of the section, the other inclusion follows from Proposition~~\ref{prop2}, as it gives us that $\ualgshift p_A$ is a left minimal ideal of $\ualgshift$ for each $A\in \mathcal{Q_\osf}$. 

So, $$Soc(\ualgshift)=\sum\limits_{A \in \mathcal{Q_\osf}}\ualgshift p_A,$$ as desired.
\end{proof}

\begin{corollary}\label{socle via line paths} Let $\osf$ be a subshift and $R$ be a field. Then $Soc(\ualgshift)=J$, where $J$ is the two-sided ideal of $\ualgshift$ generated by the set $\{p_A:A\in \mathcal{Q_\osf}\}$. Moreover, the socle is a $\Z$-graded ideal.
\end{corollary}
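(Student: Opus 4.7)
The plan is to use Theorem~\ref{socle description} as the essential input and then repackage the resulting sum as a two-sided ideal, noting along the way that the generators live in degree zero.

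First I would establish $J\subseteq Soc(\ualgshift)$. Recall from the beginning of Section~4 that, since $R$ is a field, $\ualgshift$ is semiprime, and hence $Soc(\ualgshift)$ is a two-sided ideal. For each $A\in\mathcal{Q_\osf}$, we have $p_A = p_A\cdot p_A \in \ualgshift p_A \subseteq Soc(\ualgshift)$ by Theorem~\ref{socle description} (or directly by Proposition~\ref{elements of Q generate minimal ideals}). Therefore the two-sided ideal $J$ generated by $\{p_A : A\in\mathcal{Q_\osf}\}$, being the smallest two-sided ideal containing these elements, satisfies $J\subseteq Soc(\ualgshift)$.

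Next I would show the reverse inclusion. By Theorem~\ref{socle description}, $Soc(\ualgshift) = \sum_{A\in \mathcal{Q_\osf}} \ualgshift p_A$. For each $A\in \mathcal{Q_\osf}$, the element $p_A$ lies in $J$, and $J$ is in particular a left ideal, so $\ualgshift p_A \subseteq J$. Taking the sum over all $A\in\mathcal{Q_\osf}$ yields $Soc(\ualgshift)\subseteq J$, completing the equality $Soc(\ualgshift)=J$.

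Finally, for the grading statement I would observe that each generator $p_A$ is homogeneous of degree $0$ with respect to the $\Z$-grading of $\ualgshift$ recalled in Section~\ref{unital} (the description $\ualgshift_n=\vecspan_R\{s_\alpha p_A s_\beta^*:|\alpha|-|\beta|=n\}$ shows $p_A \in \ualgshift_0$, taking $\alpha=\beta=\omega$). A two-sided ideal generated by homogeneous elements is automatically a graded ideal, so $J$ is $\Z$-graded, and therefore so is $Soc(\ualgshift)$.

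There is no real obstacle: once Theorem~\ref{socle description} is in hand, both assertions are bookkeeping. The only point worth double-checking is that the $p_A$ are genuinely homogeneous of degree $0$, which is immediate from the explicit description of $\ualgshift_0$.
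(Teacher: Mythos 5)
Your proposal is correct and follows essentially the same route as the paper: the equality is deduced from Theorem~\ref{socle description} together with the fact that $Soc(\ualgshift)$ is a two-sided ideal (by semiprimeness), and the grading statement comes from the generators $p_A$ being homogeneous of degree $0$, which is exactly the content of the reference to \cite[Remark~2.1.2]{thebook} that the paper invokes. Your write-up simply makes the bookkeeping explicit.
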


\begin{proof} The first part of the result follows from Theorem~\ref{socle description} and the fact that $Soc(\ualgshift)$ is a two-sided ideal. The grading statement follows from \cite[Remark~2.1.2]{thebook}.
\end{proof}

\begin{remark}\label{carregador}
Let $E$ be a graph. By \cite[Proposition~5.1]{Socle1}, the socle of the Leavitt path algebra $L_K(E)$ contains (but is not necessarily equal to) the sum of the minimal left ideals associated with the line points. In Theorem~\ref{socle description} we have an equality, since we are summing over a larger set (notice that a line path in $E$ induces an irrational path in the subshift associated with $E$, but the converse is not necessarily true, as can be seen in Example \ref{irrational paths are greater than line paths}).
\end{remark} 

%\begin{proof} 
%{\color{red} esta dem deve sair diretamente do teorema acima, refazer...} Let $I$ be a left nonzero minimal ideal in $\ualgshift$. Following Lemma \ref{leftideal} there exists some $x\in \ualgshift$ such that $I=\ualgshift x$. By Proposition~\ref{minimal isomorphism}, as $\ualgshift x$ is minimal, there exists $D\in \mathcal{Q_\osf}$ and a left $\ualgshift$-module isomorphism $\varphi:\ualgshift x \rightarrow \ualgshift p_D$. Then, $\varphi(x)=y p_D$ and $\varphi^{-1}(p_D)=zx$, for some $y,z\in \ualgshift$. Therefore, 
%$$x=\varphi^{-1}(\varphi(x))=\varphi^{-1}(y p_D)=\varphi^{-1}(y p_D p_D)=yp_D\varphi^{-1}(p_D)=yp_Dzx.$$ 
%From the equality above we get that $x\in J$ and, consequently, $I \subseteq J$. We conclude that $Soc(\ualgshift)\subseteq J$.
%The reverse inequality follows from Proposition~\ref{prop2} and from the fact that $Soc(\ualgshift)$ is a two-sided ideal.
%\end{proof}

\begin{corollary}\label{iogurtecaseiro}  Let $\osf$ be a subshift and $R$ be a field. Then:
\begin{enumerate}
\item $\ualgshift$ has nonzero socle if, and only if, $\mathcal{Q_\osf}$ is non-empty.
\item $Soc(\ualgshift)\neq \ualgshift$.
\end{enumerate}
\end{corollary}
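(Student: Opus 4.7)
The plan is to deduce both assertions from Theorem~\ref{socle description}, which identifies $Soc(\ualgshift)$ with $\sum_{A \in \mathcal{Q_\osf}} \ualgshift p_A$, together with the dichotomy supplied by Lemma~\ref{many elements in Q}.

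For item~(1), I would handle the two directions separately. If $\mathcal{Q_\osf}$ is empty, then the indexing set of the sum in Theorem~\ref{socle description} is empty and hence $Soc(\ualgshift)=0$. Conversely, given any $A \in \mathcal{Q_\osf}$, the element $p_A$ lies in $Soc(\ualgshift)$, and I would verify that $p_A \neq 0$ by applying the isomorphism $\Phi$ of Theorem~\ref{thm:set-theoretic-partial-action}: it sends $p_A$ to $1_A \delta_0$ in the partial crossed product, which is non-zero because $A \neq \emptyset$.

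For item~(2), I would split on whether $\mathcal{Q_\osf}$ is empty. If it is, then $Soc(\ualgshift)=0$ by item~(1), while $\ualgshift$ is non-zero (it is unital, and the same representation argument as above shows $1=p_\osf \neq 0$). The substantive case is when $\mathcal{Q_\osf} \neq \emptyset$. There I would argue by contradiction: assume $1 \in Soc(\ualgshift)$ and, using Theorem~\ref{socle description}, write $1 = \sum_{i=1}^{k} x_i p_{A_i}$ as a finite sum with $A_i=\{p_i\} \in \mathcal{Q_\osf}$. Invoking Lemma~\ref{many elements in Q}, which upgrades non-emptiness of $\mathcal{Q_\osf}$ to infinitude, I would choose $A'=\{p'\} \in \mathcal{Q_\osf}$ distinct from each $A_i$; since every $A_i$ is a singleton, this forces $A_i \cap A'=\emptyset$ for all $i$. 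Multiplying the presumed expression for $1$ on the right by $p_{A'}$ then gives $p_{A'} = \sum_i x_i p_{A_i \cap A'} = 0$, contradicting the non-vanishing of $p_{A'}$ established in the argument for item~(1).

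The only real obstacle is producing a single irrational path outside the finite collection $\{A_1,\ldots,A_k\}$ that appears in a hypothetical expression of $1$ as an element of the socle, and this is exactly what Lemma~\ref{many elements in Q} furnishes at no cost. Everything else is a direct manipulation of the defining relations of $\ualgshift$ together with the fact that $p_B \neq 0$ whenever $B \in \TCB$ is non-empty.
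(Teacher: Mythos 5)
Your proposal is correct and follows essentially the same route as the paper: both deduce item (1) from the description of the socle as the sum (equivalently, the ideal generated by) the $p_A$ with $A\in\mathcal{Q_\osf}$, and both prove item (2) by writing $1$ as a finite sum $\sum_i x_i p_{A_i}$, using Lemma~\ref{many elements in Q} to produce an irrational path $A'$ disjoint from all the $A_i$, and deriving the contradiction $p_{A'}=0$. Your extra verification that $p_A\neq 0$ via the isomorphism of Theorem~\ref{thm:set-theoretic-partial-action} is a harmless elaboration of a fact the paper takes for granted.
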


\begin{proof}
 The first statement is a direct consequence of Corollary~\ref{socle via line paths}, so we prove the second item.  The result is clearly true if $\mathcal{Q_\osf}=\emptyset$ and hence we may assume that $\mathcal{Q_\osf}\neq \emptyset$. 
   %Let $A=\{q\}\in \mathcal{Q_\osf}$. Notice that $\langle p_A \rangle$ is the linear span of elements of the form $x p_A s_a p_B s_b^*$ where $a,b\in \lang$, $B\in \TCB$ and $x\in \ualgshift$. Since $A$ is a single point set then $r(a,A)\cap B=r(a,A)$, or $r(a,A)\cap B=\emptyset$, in which case $ p_A s_\alpha p_B=s_ap_{r(a,A)}p_B=0$. Case $r(a,A)\cap B=r(a,A)$, then $q$ has the form $q=ap$, where $p\in \mathcal{Q_\osf}$. In this case  write $C=\{p\}$, and then  
% $$xp_As_a p_B s_b^*=xs_a p_{r(a,A)}p_B s_b^*=xs_ap_Cs_b^*.$$ 
  %Therefore, $\langle p_A \rangle$ is the linear span of elements of the form $y p_C s_b^*$ where $y\in \ualgshift$, $C\in \mathcal{Q_\osf}$ and $b\in \lang$. 
%To show that $Soc(\ualgshift)\neq \ualgshift$ we show that $Soc(\ualgshift)$ dos not contain $p_\osf$ (the unit of $\ualgshift$). 

Seeking for a contradiction, suppose that $p_\osf\in Soc(\ualgshift)$. From Theorem~\ref{socle description}, we obtain that $p_\osf$ is a sum of the form $$p_\osf=\sum\limits_{j=1}^m z_j p_{D_j},$$ where $z_j \in \ualgshift$ and $D_j\in \mathcal{Q_\osf}$ for each $j$.
%From the previous paragraph and Theorem~\ref{socle via line paths}, $Soc(\ualgshift)$ is the linear span of elements of the form $y p_C s_b^*$ where $y\in\ualgshift$, $C\in \mathcal{Q_\osf}$ and $b\in \lang$.
%Then $p_\osf$ has the form $$p_\osf=\sum_{i=1}^n y_i p_{C_i}s_{b_i}^*+\sum\limits_{j=1}^m z_j p_{D_j}$$ where $y_i,z_j\in \ualgshift$, $C_i, D_j\in \mathcal{Q_\osf}$ for each $i,j$ and $b_i\in \lang$ with $|b_i|\geq 1$ for each $i$. Notice that $C_i=\{q_i\}$ where $q_i\in \osf$.
By Lemma~\ref{many elements in Q}, $\mathcal{Q_\osf}$ contains infinitely many elements. So, there exists $D\in \mathcal{Q_\osf}$ such that $D\neq D_j$ for each $j\in \{1,...,m\}$, which implies that $p_{D_j}p_D=0$ for each $j$. Therefore,
$$p_D=p_\osf p_D=\sum\limits_{j=1}^m z_j p_{D_j}p_D=0,$$ which is a contradiction, since $p_D\neq 0$.
We conclude that $p_\osf\notin Soc(\ualgshift)$ and, consequently, $\ualgshift\neq Soc(\ualgshift)$. 
 \end{proof}

\begin{remark}
For finite graphs, the socle of a Leavitt path algebra is non-zero if, and only if, the graph has sinks, see \cite{Socle}. So, for a finite graph without sinks $E$, the socle of $L_K(E)$ is zero. We can obtain the last statement using the result above, considering the subshift $\osf$ associated with $E$, recalling that $\ualgshift$ is isomorphic to $L_K(E)$, and noticing that $\mathcal{Q_\osf}=\emptyset$. 
\end{remark}

In the remainder of this section, we focus on finding a smaller generating set for the socle of a subshift algebra. For this, we need the following definition.

\begin{definition}\label{def equiv line paths} Let $\osf$ be a subshift and $x,y\in \osf$. We say that the elements $x$ and $y$ are equivalent, and write $x\sim y$, if there exists $m,n\in \N$ such that $\sigma^m(x)=\sigma^{n}(y)$.
\end{definition}

%\begin{remark}\label{equiv line paths} For two line paths $p=p_0p_1...$ and $q=q_0q_1...$ we have that $p\sim q$ if, and only if, $p_i=q_j$ for some $i,j\in \N$. To see this, notice first that $x=x_0x_1...$ is a line path if and only if $\sigma^n(x)$ is also a line path for each $n\in \N$. If $p\sim q$, then there exits $m,n\in \N$ such that $\sigma^m(p)=\sigma^n(q)$ and consequently $p_{n+i}=q_{m+i}$ for each $i\in \N$. Conversely, if $p_i=q_j$ for some $i,j$ then, since $p_ip_{i+1}...$ and $q_jq_{j+1}...$ are line paths, we have that $\{p_ip_{i+1}...\}=Z_{p_i}=Z_{q_j}=\{q_jq_{j+1}...\}$ and hence $p_ip_{i+1}...=q_jq_{j+1}...$, which means that $p\sim q$.
%\end{remark}

In the proposition below, we denote by $\langle x \rangle$ the two-sided ideal generated by $x$ in $\ualgshift$.

\begin{proposition}\label{equivalent paths generate the same ideals} Let $\osf$ be a subshift and let $A,B\in \TCB$ be two single point sets, say, $A=\{p\}$ and $B=\{q\}$, where $p,q\in \osf$. If $p\sim q$ then $\langle p_A \rangle =\langle p_B \rangle$, and if $p\not \sim q$ then $\langle p_A \rangle \langle p_B \rangle =0$.
\end{proposition}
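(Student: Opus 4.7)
The plan is to handle the two implications separately; both reduce to explicit manipulations with the generating relations, combined with the structural Lemmas~\ref{chucrute} and~\ref{parouAchuva!} which have already been developed.

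For the first implication, assume $p\sim q$ and pick $m,n\in\N$ with $z:=\sigma^m(p)=\sigma^n(q)$. Write $\alpha=p_0\cdots p_{m-1}$ and $\beta=q_0\cdots q_{n-1}$, so that $p=\alpha z$, $q=\beta z$, and $\alpha,\beta\in\lang$. The key observation is that $\{z\}\in\TCB$ because $\{z\}=r(A,\alpha)$, and moreover $z\in F_\alpha\cap F_\beta$. I will produce $p_{\{z\}}$ inside both $\langle p_A\rangle$ and $\langle p_B\rangle$ and then produce $p_A,p_B$ back from $p_{\{z\}}$, which forces $\langle p_A\rangle=\langle p_{\{z\}}\rangle=\langle p_B\rangle$. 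Concretely, applying Lemma~\ref{chucrute}(2) gives $s_\alpha^*p_A=p_{\{z\}}s_\alpha^*$, so
\[
s_\alpha^*\,p_A\,s_\alpha=p_{\{z\}}s_\alpha^*s_\alpha=p_{\{z\}}p_{F_\alpha}=p_{\{z\}},
\]
and symmetrically $s_\beta^*p_Bs_\beta=p_{\{z\}}$. In the reverse direction, $p_As_\alpha=s_\alpha p_{\{z\}}$ together with $s_\alpha s_\alpha^*=p_{Z_\alpha}$ and $p\in Z_\alpha$ yields $p_A=p_A p_{Z_\alpha}=p_As_\alpha s_\alpha^*=s_\alpha p_{\{z\}}s_\alpha^*$, and similarly $p_B=s_\beta p_{\{z\}}s_\beta^*$. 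This closes the first part.

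For the second implication, assume $p\not\sim q$; I want to show $\langle p_A\rangle\langle p_B\rangle=0$. Since a typical product in the product of ideals has the form $(u_1 p_A u_2)(u_3 p_B u_4)=u_1p_A(u_2u_3)p_Bu_4$, it suffices to prove $p_A\,w\,p_B=0$ for every $w\in\ualgshift$, and by linearity and the spanning comment following Lemma~\ref{chucrute} it is enough to verify this on generators $w=s_\gamma p_D s_\delta^*$ with $\gamma,\delta\in\lang$ and $D\in\TCB$. Applying Lemma~\ref{parouAchuva!}(2) to $p_A s_\gamma p_D s_\delta^*$: if this expression is nonzero, then there exists $y\in\osf$ with $p=\gamma y$, $\{y\},\{\delta y\}\in\TCB$, and
\[
p_A\,s_\gamma p_D s_\delta^*=s_\gamma s_\delta^*\,p_{\{\delta y\}}.
\]
Multiplying on the right by $p_B=p_{\{q\}}$ gives $s_\gamma s_\delta^*\,p_{\{\delta y\}\cap\{q\}}$, which is zero unless $q=\delta y$. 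But $q=\delta y$ together with $p=\gamma y$ means $\sigma^{|\gamma|}(p)=y=\sigma^{|\delta|}(q)$, contradicting $p\not\sim q$. Hence $p_A s_\gamma p_D s_\delta^* p_B=0$ for all $\gamma,\delta,D$, so $p_A w p_B=0$ for every $w$, and therefore $\langle p_A\rangle\langle p_B\rangle=0$.

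There is no real obstacle: everything is a direct computation with the Cuntz-type relations, using Lemma~\ref{chucrute} in the first half and Lemma~\ref{parouAchuva!}(2) in the second. The only point requiring a bit of care is the reduction, in the second half, from an arbitrary element of $\langle p_A\rangle\langle p_B\rangle$ to checking $p_A\,w\,p_B=0$ on the spanning monomials $s_\gamma p_D s_\delta^*$; this is what lets Lemma~\ref{parouAchuva!}(2) apply directly and turns the non-equivalence hypothesis $p\not\sim q$ into the disjointness $\{\delta y\}\cap\{q\}=\emptyset$ that kills the product.
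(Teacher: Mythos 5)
Your proof is correct and follows essentially the same route as the paper's: the first half uses Lemma~\ref{chucrute}(2) to conjugate $p_A$ and $p_B$ into $p_{\{z\}}$ and back (you do the general case directly, where the paper first treats $p=\sigma^n(q)$ and then reduces to it), and the second half rests on the same mechanism, namely Lemma~\ref{parouAchuva!}(2) turning $p_A s_\gamma p_D s_\delta^*$ into $s_\gamma s_\delta^* p_{\{\delta y\}}$ with $\delta y\sim p$, so that $p_{\{\delta y\}}p_{\{q\}}=0$ when $p\not\sim q$. Your reduction to checking $p_A w p_B=0$ on the spanning monomials is a slightly cleaner packaging of the paper's spanning-set argument for the two ideals, but the substance is identical.
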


\begin{proof} 
We begin the proof showing that $p\sim q$ implies $\langle p_A \rangle=\langle p_B \rangle$.

Suppose first that $p=\sigma^n(q)$ for some $n\in \N$. Then $q=\alpha p$, where $\alpha\in \lang$. %Recall, from Lemma~\ref{chucrute}, that $p_B s_\alpha=s_\alpha p_{r(B,\alpha)}$.
Hence, 
$$s_\alpha^*p_B s_\alpha=s_\alpha^* s_\alpha p_{r(B,\alpha)}=p_{F_\alpha} p_{r(B,\alpha)}=p_{F_\alpha\cap r(B,\alpha)}=p_A,$$ 
which implies that $\langle p_A\rangle\subseteq \langle p_B \rangle$. Multiplying the above equality on the left by $s_\alpha$, and on the right by $s_\alpha^*$, we get that
$$s_\alpha p_A s_\alpha^*=s_\alpha s_\alpha^* p_B s_\alpha s_\alpha^*=p_{Z_\alpha} p_B p_{Z_\alpha}=p_B,$$ and consequently $\langle p_B\rangle\subseteq \langle p_A \rangle$. Therefore, we have proved that $\langle p_A\rangle =\langle p_B \rangle$.

To prove the general case, suppose that $p\sim q$. Then, there exists $m,n\in \N$ such that $\sigma^m(p)=\sigma^n(q)$, and hence there exists $a,b\in \lang$, and $z\in \osf$, such that $p=a z$ and $q= b z$. Using that $r(A,\alpha)=\{z\}=r(B,\beta)$, and what we proved in the previous paragraph, we obtain that $\langle p_{A} \rangle = \langle p_{\{z\}} \rangle =\langle p_B \rangle$.

Now, suppose that $p$ and $q$ are not equivalent. We will show that $\langle p_{A} \rangle \langle p_B \rangle=0$. 

From Lemma \ref{parouAchuva!}, $\langle p_A \rangle$ is the linear span of elements of the form $s_\alpha s_\beta^* p_{\{\mu\}}$, where $\mu\sim p$ and $\alpha, \beta \in \lang$, and also $\langle p_B\rangle$ is the linear span of elements of the form $s_a s_b^* p_{\{\nu\}}$, where $\nu \sim q$ and $a,b \in \lang$. Therefore, it is enough to show that $s_\alpha s_\beta ^* p_{\{\mu\}}s_as_b^* p_{\{\nu\}}=0$, for $\alpha, \beta, a,b, \mu$ and $\nu$ as above.

For $a,b, \alpha, \beta, \mu, \nu$ as above, notice that, by the second item of Lemma~\ref{parouAchuva!} either $p_{\{\mu\}}s_a s_b^* =0$ or there exists $z\sim \mu$ such that 
$p_{\{\mu\}}s_a s_b^*= s_a s_b^* p_{\{z\}}$. As $p\not \sim q$, we have that $z \not \sim \nu$ and hence $p_{\{z\}}p_{\{\nu\}}=0$. Therefore, 
$$s_\alpha s_\beta ^* p_{\{\mu\}}s_as_b^* p_{\{\nu\}}=s_\alpha s_\beta^* s_a s_b^* p_{\{z\}}p_{\{\nu\}}=0$$ and, consequently, $\langle p_A \rangle \langle p_B \rangle=0$.

\end{proof}

Let $\mathcal{Q_\osf}/\hspace{-0.15 cm}\sim$ be the quotient space of $\mathcal{Q_\osf}$ by $\sim$, whose elements we denote by $[q]$ (here, and when necessary below, we identify a one-point set in $\mathcal{Q_\osf}$ with its element). To each equivalence class $[q]$ in $\mathcal{Q_\osf}/\hspace{-0.15 cm}\sim$ we associated a projection, denoted by $p_{[q]}$, which is defined as $p_{[q]}:= p_{\{q'\}}$, where $q'\in [q]$. By the proposition above, $p_{[q]}$ is well-defined. Joining this discussion with Corollary~\ref{socle via line paths} and Proposition~\ref{equivalent paths generate the same ideals},
we get the following description of the socle of a subshift algebra.

\begin{corollary}\label{socle characterization} Let $\osf$ be a subshift and $R$ be a field. Then, 
$$Soc(\ualgshift)=\bigoplus\limits_{[q]\in\mathcal{Q}_\osf/\sim } \langle [p_q] \rangle.$$

%$$Soc(\ualgshift)=\bigoplus \limits_{[p]\in }$$, where $J$ is the two-sided ideal of $\ualgshift$ generated by $\{p_{[q]} :  [q]\in \mathcal{Q_\osf}/\hspace{-0.15 cm}\sim\}$.
\end{corollary}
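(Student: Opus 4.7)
The plan is to combine the description of the socle from Corollary~\ref{socle via line paths} with both halves of Proposition~\ref{equivalent paths generate the same ideals}. Corollary~\ref{socle via line paths} identifies $Soc(\ualgshift)$ with the two-sided ideal generated by $\{p_A : A\in \mathcal{Q_\osf}\}$, that is, with $\sum_{A\in\mathcal{Q_\osf}}\langle p_A\rangle$. The first half of Proposition~\ref{equivalent paths generate the same ideals} guarantees that $\langle p_A\rangle$ depends only on the $\sim$-class of the unique element of $A$, so $p_{[q]}$ is well defined and
\[
\sum_{A\in\mathcal{Q_\osf}}\langle p_A\rangle=\sum_{[q]\in\mathcal{Q_\osf}/\sim}\langle p_{[q]}\rangle.
\]
This settles the equality of underlying ideals; what remains is to promote this sum to a direct sum.

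For directness, I would use the second half of Proposition~\ref{equivalent paths generate the same ideals}, namely $\langle p_{[q]}\rangle\langle p_{[q']}\rangle=0$ whenever $[q]\neq[q']$. Suppose we have a finite relation $z_1+\cdots+z_n=0$ with $z_i\in\langle p_{[q_i]}\rangle$ and $[q_1],\ldots,[q_n]$ pairwise distinct. Fix $j$ and let $x\in\ualgshift$ be arbitrary. Multiplying the relation on the right by $xz_j$ gives $\sum_i z_i x z_j=0$. Since each $\langle p_{[q_i]}\rangle$ is a \emph{two-sided} ideal, $xz_j\in\langle p_{[q_j]}\rangle$, so for every $i\neq j$ one has $z_i x z_j\in\langle p_{[q_i]}\rangle\langle p_{[q_j]}\rangle=0$. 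The relation therefore collapses to $z_j x z_j=0$ for every $x\in\ualgshift$.

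At this point I would invoke semiprimeness of $\ualgshift$ (available because $R$ is a field, as recalled before Proposition~\ref{elements of Q generate minimal ideals}) together with its standard characterization: an element $a$ of a semiprime ring with $a\ualgshift a=0$ must be zero. Applied to $z_j$, this yields $z_j=0$; since $j$ was arbitrary, every summand vanishes, and the sum is direct.

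The main obstacle I anticipate is purely bookkeeping: one must keep track of the fact that each $\langle p_{[q]}\rangle$ is two-sided, as this is what lets us slide $x$ through $z_j$ and reach an expression of the form $z_j\ualgshift z_j$ rather than merely $z_j\langle p_{[q_j]}\rangle$ (which, by itself, would not be enough to conclude $z_j=0$ without an identity element in the ideal). No further subtlety is expected; the semiprimeness step is standard and both ingredients from Proposition~\ref{equivalent paths generate the same ideals} are used exactly once.
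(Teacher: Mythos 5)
Your proposal is correct and follows essentially the same route as the paper, which simply cites Corollary~\ref{socle via line paths} together with Proposition~\ref{equivalent paths generate the same ideals} without further elaboration. Your explicit verification that the sum is direct --- using the orthogonality $\langle p_{[q]}\rangle\langle p_{[q']}\rangle=0$ together with semiprimeness to conclude $z_j\ualgshift z_j=0\Rightarrow z_j=0$ --- is precisely the argument the paper leaves implicit, and it is carried out correctly.
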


\section{The socle of a subshift algebra as a Leavitt path algebra.}\label{socle as leavitt algebra}

In this section, starting from a subshift $X$, we build a graph such that the associated Leavitt path algebra is graded isomorphic to the socle of the subshift algebra. Furthermore, we provide applications for such construction. The definition of the graph depends on some subsets of $X$, which we define below.

Fix an element $p\in \osf$ such that $\{p\}\in \mathcal{Q_\osf}$, and define $I_0=\N$. Moreover, let $$J_0=\{\sigma^{n}(p): n\in I_0\},$$ 
$$J_1=\sigma^{-1}(J_0)\setminus J_0,$$
and inductively define, for each $n\in \N$,
$$J_n=\sigma^{-1}(J_{n-1})\setminus (J_0\cup J_1\cup\ldots \cup J_{n-1}).$$ 
Notice that $J_n\cap J_m=\emptyset$ for each $n\neq m$. Moreover, it is possible that $J_n=\emptyset$ for some $n$, in which case this process is finite. 

For each $i\in I_0$ define $z_i=\sigma^i(p)$, so that $J_0=\{z_i:i\in I_0\}$. Inductively, for each $n\geq 1$,  chose an index set $I_n$ that indexes the elements of $J_n$ and such that $I_n \cap I_m=\emptyset$ for each $n\neq m$. Write $J_n=\{z_i:i\in I_n\}$.

Define $I=\bigcup\limits_{n=0}^\infty I_n$ and $J=\bigcup\limits_{n=0}^\infty J_n$, which are both disjoint unions. The set $I$ is the index set of the elements of $J$, so the map $I\ni i \mapsto z_i \in J$ is a bijective map.

\begin{remark}\label{voltouovento} Each element $z_i \in J$ has a unique representation of the form $z_i=az_j$, where $a \in \alf$ and $z_j \in J$. This is clear when $i\in I_0$, since in this case $z_i=\sigma^{i}(p)=a \sigma^{i+1}(p)$, where $a$ is the first letter of $z_i$. When $z_i\in J_n$ for some $n\geq 1$, we have that $z_i\in \sigma^{-1}(J_{n-1})$ and hence $\sigma(z_i)=z_j$ for a (unique) $z_j \in J_{n-1}$. Therefore, $z_i=az_j$ for some $a\in \alf$.
\end{remark}

Now, we define a graph $E_p$ associated to the fixed element $p\in \osf$ (with $\{p\}\in \mathcal{Q_\osf}$). Both the vertex and edge sets are indexed by the set $I$ defined above and we write $E^0=\{v_i:i\in I\}$ and $E^1=\{e_i:i\in I\}$. The source map is defined by $s(e_i)=v_i$, for each $i\in I$. To define the range map, for each $i\in I$, write $z_i=az_j$ as in Remark~\ref{voltouovento}, and let $r(e_i)=v_j$. Equivalently,  the range map is defined as $r(e_i)=v_j$, where $\sigma(z_i)=z_j$. Therefore, we get that $r(e_i)=v_j=s(z_j)=s(\sigma(z_i))$.

%\begin{remark}\label{no closed paths in E_p} In the graph $E_p$ there are no closed paths {\color{red} Include the definition of a closed path, mas ver a reescrita abaixo antes de fazer algo aqui}. To see this, recall that from the definitions of the source and range maps we get that for each $n\geq 1$ and $i\in I_n$, $s(e_i)=v_i$ and $r(e_i)=v_j$ where $j\in J_{n-1}$. So there are no closed paths $\alpha$ in the graph $E_p$ such that $s(\alpha)=v_i$ with $i\in I_n$ for some $n\geq 1$. For $i\in I_0$, the source and range maps are defined as $s(e_i)=v_i$ and $r(e_i)=e_{i+1}$, so that there are also no closed paths $\alpha$ such that $s(\alpha)=v_i$ for some $i\in I_0$. Consequently, there are no closed paths in $E_p$.
%\end{remark}

%{\color{red} Reescrita do remark acima:

\begin{remark}\label{no closed paths in E_p} In the graph $E_p$ there are no closed paths, that is, strings $\alpha=e_{k_1}\ldots e_{k_n}$ such that $r(e_{k_l})=s(e_{k_{l+1}})$, $l=1\ldots n-1$, and $r(e_{k_n})=s(e_{k_1})$. Indeed, for each $n\geq 1$ and $i\in I_n$, we have that $s(e_i)=v_i$ and $r(e_i)=v_j$, where $j\in I_{n-1}$. So, if $\alpha$ is a path in the graph $E_p$ such that $s(\alpha)=v_i$, with $i\in I_n$ and $n\geq 1$, then the range of $\alpha$ lies in some $I_j$ with $j<n$. Hence $\alpha$ is not a closed path. Suppose now that $\alpha$ is a path such that $s(\alpha)=v_i$ for some $i\in I_0$. Notice that for $i\in I_0$, the source and range maps are defined as $s(e_i)=v_i$ and $r(e_i)=v_{i+1}$. Hence, $\alpha$ is not a closed path. Consequently, there are no closed paths in $E_p$.
\end{remark}

Our next goal is to prove that the ideal generated by an irrational element $\{p\}$ is isomorphic to the Leavitt path algebra of the graph $E_p$ defined above. Before we do this, we prove the following lemma.

\begin{lemma}\label{ventonocampeche} Let $\osf$ be a subshift and $Y\subseteq \osf$ be such that $\{y\}\in \TCB$ for each $y\in Y$. Then:
\begin{enumerate}
\item for each $n\in \N$ and $z\in \sigma^n(Y)$ it holds that $\{z\}\in \TCB$, and,
\item for each $n\in \N$ and $z\in \sigma^{-n}(Y)$ it holds that $\{z\}\in \TCB$.
\end{enumerate}
\end{lemma}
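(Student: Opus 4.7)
The plan is to reduce both items to the case $n = 1$ by a short induction and then dispatch the two base cases using the machinery already developed.

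For item (2), I first treat $n = 1$: given $z \in \sigma^{-1}(Y)$, set $y := \sigma(z) \in Y$ and write $z = a y$ with $a \in \alf$. Since $z \in \osf$, we have $a \in \lang$ and $y \in F_a$, so $\{y\} \subseteq F_a$, and Lemma~\ref{parouAchuva!}(1) (applied with $D = \{y\}$ and $\beta = a$) delivers $\{z\} = \{ay\} \in \TCB$. For the inductive step, if the statement holds for some $n$ and $z \in \sigma^{-(n+1)}(Y)$, then $\sigma(z) \in \sigma^{-n}(Y)$ satisfies $\{\sigma(z)\} \in \TCB$ by the inductive hypothesis, and the $n = 1$ argument now applied to $Y' := \sigma^{-n}(Y)$ in place of $Y$ gives $\{z\} \in \TCB$.

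For item (1), the same inductive reduction brings matters to the case $n = 1$: given $y \in Y$ with $\{y\} \in \TCB$ and $z := \sigma(y)$, write $y = a z$ with $a \in \alf$, so that $r(\{y\}, a) = \{z\}$. Lemma~\ref{chucrute}(2) applied with $A = \{y\}$ and $\alpha = a$ yields $s_a^* p_{\{y\}} = p_{\{z\}} s_a^*$; right-multiplying by $s_a$ then produces
\[
s_a^* p_{\{y\}} s_a \;=\; p_{\{z\}}\, s_a^* s_a \;=\; p_{\{z\}}\, p_{F_a} \;=\; p_{\{z\}},
\]
the last equality holding because $z \in F_a$ (as $a z = y \in \osf$). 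This exhibits $p_{\{z\}}$ as a concrete element of the diagonal subalgebra $\udalgshift$, hence $\{z\} \in \TCB$ as required.

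The main subtlety is the implicit use, in item (1), of the fact that $\TCB$ is closed under the relative range operation $A \mapsto r(A, a)$: this is precisely what makes the statement of Lemma~\ref{chucrute}(2) meaningful in the first place, and it follows structurally from the partial skew group ring description of Theorem~\ref{thm:set-theoretic-partial-action}. I will treat it as part of the established framework for subshift algebras, rather than re-deriving it by induction on the Boolean generators $C(\alpha,\beta)$ of $\TCB$.
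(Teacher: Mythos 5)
Your proof is correct and rests on the same two ingredients as the paper's own argument: Lemma~\ref{parouAchuva!}(1) for preimages, and the closure of $\TCB$ under the relative range operation for images (which the paper likewise invokes without re-deriving, writing $\{z\}=r(\{y\},\alpha)\in\TCB$). The only difference is that you reduce to $n=1$ by induction, whereas the paper handles general $n$ in a single step by writing $y=\alpha z$ (resp.\ $z=\beta y$) with a word $\alpha$ (resp.\ $\beta$) of length $n$ and applying the same two facts to that word directly, so the induction is harmless but unnecessary.
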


\begin{proof} Let $n\in \N$ and $z\in \sigma^n(Y)$. Then, $z=\sigma^{n}(y)$ for some $y\in Y$ and hence $y=\alpha z$, for some $\alpha \in \lang$. Since $\{y\}\in \TCB$, we have that $\{z\}=r(\{y\}, \alpha)\in \TCB$.

To prove the second item, let $n\in \N$ and $z\in \sigma^{-n}(Y)$. In this case, $\sigma^n(z)=y$ for some $y\in Y$, and therefore $z=\beta y$ for some $\beta \in \lang$. Since $\{y\} \in \TCB$, from the first item of Lemma~\ref{parouAchuva!}, we get that $\{z\}\in \TCB$.
\end{proof}

As a consequence of the above lemma, we obtain that every single element set formed by an element in the equivalence class of an irrational path is in $\TCB$, as stated below.

\begin{proposition}\label{quasesolnocampeche}
    Let $\{p\}\in \mathcal{Q_\osf}$. Then, $[p]=J$, where $J$ is the set constructed above. Moreover, for each $z\in [p]$, the set $\{z\}$ belongs to the boolean algebra $\TCB$.
\end{proposition}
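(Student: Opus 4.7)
The plan is to establish the set-theoretic equality $[p]=J$ by proving both inclusions, and then derive $\TCB$-membership for each singleton from Lemma~\ref{ventonocampeche}.

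For $J\subseteq[p]$, I would induct on $n$ to show every $z\in J_n$ satisfies $z\sim p$. The base case $n=0$ is immediate, since $J_0=\{\sigma^k(p):k\in\N\}$. For $n\ge 1$, the construction gives $z\in\sigma^{-1}(J_{n-1})$, so $\sigma(z)\in J_{n-1}$ is equivalent to $p$ by induction, hence so is $z$.

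For the reverse inclusion $[p]\subseteq J$, fix $z\sim p$ and pick $m,n\in\N$ with $\sigma^m(z)=\sigma^n(p)\in J_0$. Let $k$ be the smallest non-negative integer with $\sigma^k(z)\in J_0$ (so $0\le k\le m$). If $k=0$ then $z\in J_0$; otherwise I will prove by downward induction on $j$ that $\sigma^{k-j}(z)\in J_j$ for $0\le j\le k$. The base case $j=0$ is the choice of $k$. For the inductive step, observe that $\sigma^{k-j-1}(z)\in\sigma^{-1}(J_j)$; it cannot lie in $J_0$ (that would contradict minimality of $k$, since $k-j-1<k$) nor in $J_i$ for $1\le i\le j$ (for then $\sigma^{k-j}(z)$ would belong to $J_{i-1}$, contradicting the disjointness $J_{i-1}\cap J_j=\emptyset$). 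Hence $\sigma^{k-j-1}(z)\in J_{j+1}$, completing the induction; setting $j=k$ gives $z\in J_k\subseteq J$.

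For the second assertion, note that $\{p\}\in\mathcal{Q_\osf}\subseteq\TCB$. Applying the first part of Lemma~\ref{ventonocampeche} to $Y=\{p\}$, we obtain $\{w\}\in\TCB$ for every $w\in J_0$. Given $z\in J_n$ with $n\ge 1$, we have $\sigma^n(z)\in J_0$, so $\{\sigma^n(z)\}\in\TCB$ and $z\in\sigma^{-n}(\{\sigma^n(z)\})$; the second part of Lemma~\ref{ventonocampeche} then yields $\{z\}\in\TCB$.

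The main obstacle I anticipate is the combinatorial bookkeeping in the downward induction: one must simultaneously argue that $\sigma^{k-j-1}(z)$ lies in $\sigma^{-1}(J_j)$ while excluding membership in each earlier $J_i$. Both the minimality of $k$ and the mutual disjointness of the $J_i$'s are needed, and they interact in a slightly non-obvious way to force membership precisely in $J_{j+1}$.
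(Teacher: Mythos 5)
Your proof is correct and follows essentially the same route as the paper: the paper dismisses the equality $[p]=J$ as immediate from the construction of $J$ (your two inclusions, including the downward induction using minimality of $k$ and disjointness of the $J_i$, are a careful writing-out of exactly that), and the $\TCB$-membership is obtained, as in the paper, from the two items of Lemma~\ref{ventonocampeche} applied to $J_0$ and then to the $J_n$ with $n\geq 1$.
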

\begin{proof}
   The first statement follows by the definition of $J$. For the second statement, notice that, by the first item of Lemma~\ref{ventonocampeche}, we have that $\{z\} \in \TCB$ for each $z\in J_0$. Also, by applying successively the second item of Lemma~\ref{ventonocampeche}, we get that $\{z\}\in \TCB$ for each $z\in J_n$, $n\geq 1$. 
\end{proof}

The following lemma will be useful in the study of Condition~(Y) within the graph associated with the socle of a subshift algebra.

\begin{lemma}\label{lema bijecoes Ep}let $\osf$ be a subshift,  $p\in \osf$ be such that $\{p\}\in \mathcal{Q_\osf}$, and $E_p$ be the associated graph. Denote by $E_p^1$ the edge set of $E_p$ and by $E_p^\infty$ the set of all the infinite paths in $E_p$. There exists bijections $\psi:[p]\rightarrow E_p^\infty$ and $\varphi:[p]\rightarrow E_p^1$ such that:
\begin{enumerate}
    \item For each $z\in [p]$, it holds that $\psi(z)=\varphi(z)\psi(\sigma(z))$.
    \item For each $z\in [p]$ and each $n\in \N$ it holds that
    $$\psi(z)=\varphi(z)\varphi(\sigma(z))\varphi(\sigma^2(z))...\varphi(\sigma^n(z))\psi(\sigma^{n+1}(z)).$$
    
    \item For each $z\in [p]$, it holds that $$\psi(z)=\varphi(z)\varphi(\sigma(z))\varphi(\sigma^2(z))\varphi(\sigma^3(z))...,$$ that is, the edge in position $n$  of the infinite path $\psi(z)$ is $\varphi(\sigma^{n}(z))$, for each $n\in \N$. 
\end{enumerate}
\end{lemma}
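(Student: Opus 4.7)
The two bijections are forced by the construction: define $\varphi\colon [p]\to E_p^1$ by $\varphi(z_i)=e_i$. Since Proposition~\ref{quasesolnocampeche} identifies $[p]$ with $J$ and the indexing $i\mapsto z_i$ is a bijection $I\to J$ by construction, $\varphi$ is a bijection. The first observation to record is that each vertex of $E_p$ emits exactly one edge, because $s(e_j)=v_j$, so $s(e_j)=v_i$ forces $j=i$. In particular, $E_p$ is row-finite and sinkless, which justifies Remark~\ref{rmk48} in passing and is what allows us to speak of infinite paths starting at every vertex.

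Next, I would define $\psi\colon[p]\to E_p^\infty$ by
\[
\psi(z)\;:=\;\varphi(z)\,\varphi(\sigma(z))\,\varphi(\sigma^2(z))\,\cdots,
\]
and check that this is actually an infinite path in $E_p$. Writing $z=z_i$, we have $\varphi(z)=e_i$ with $r(e_i)=v_j$ where $\sigma(z_i)=z_j$; but $\sigma(z)=\sigma(z_i)=z_j$, so $s(\varphi(\sigma(z)))=s(e_j)=v_j=r(\varphi(z))$. Applying this inductively along the sequence $z,\sigma(z),\sigma^2(z),\ldots$ (each of which lies in $[p]$ since $[p]$ is $\sigma$-invariant) shows that consecutive edges compose correctly. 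Properties (1) and (3) are then immediate from the definition of $\psi$, and property (2) follows by iterating (1), or equivalently by truncating the infinite product in (3) at position $n+1$.

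It remains to show $\psi$ is a bijection. For injectivity, if $\psi(z)=\psi(z')$, then comparing initial edges gives $\varphi(z)=\varphi(z')$, whence $z=z'$ by injectivity of $\varphi$. For surjectivity, given an infinite path $\alpha=e_{i_0}e_{i_1}e_{i_2}\cdots$ in $E_p$, set $z:=z_{i_0}\in [p]$. The consecutive-edge condition $r(e_{i_n})=s(e_{i_{n+1}})=v_{i_{n+1}}$ together with the definition $r(e_{i_n})=v_j$ where $\sigma(z_{i_n})=z_j$ gives $\sigma(z_{i_n})=z_{i_{n+1}}$. A straightforward induction then yields $\sigma^n(z)=z_{i_n}$ for every $n\in\N$, and so $\varphi(\sigma^n(z))=e_{i_n}$, i.e.\ $\psi(z)=\alpha$.

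I do not anticipate a genuine obstacle: the lemma is essentially a bookkeeping statement asserting that the construction $i\mapsto z_i$, $i\mapsto e_i$ transports the shift dynamics on $[p]$ onto the edge-shift dynamics on $E_p^\infty$. The only mild point worth explicit attention is that each vertex emits a unique edge, which is what makes the two sides match up without any choices.
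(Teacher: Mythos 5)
Your proposal is correct and follows essentially the same route as the paper: both hinge on the observation that each vertex $v_i$ emits the unique edge $e_i$, define $\varphi(z_i)=e_i$, and obtain $\psi$ from the resulting correspondence between $[p]=J$ and infinite paths. The only cosmetic difference is that the paper defines $\psi(z_i)$ as the unique infinite path starting at $v_i$ and then derives the product formula from item (1), whereas you take the product formula as the definition and verify bijectivity directly; both verifications are sound.
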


\begin{proof} We begin defining the maps $\psi$ and $\varphi$.
Let $I$ and $J$ be as at the beginning of Section~\ref{socle as leavitt algebra}. Recall that $I$ is the index set of $J$, so that $J=\{z_i:i\in I\}$ and $E_p^1=\{e_i:i\in I\}$. By Proposition~\ref{quasesolnocampeche}, we have that $[p]=J$ and hence $[p]=\{z_i:i\in I\}$. So, the map $\varphi:[p]\rightarrow E_p^1$ defined by $\varphi(z_i)=e_i$ is a bijection. Now we define $\psi$. From the definition of the source and range maps of $E_p$ (see Remark~\ref{voltouovento}) we get that $s(\varphi(z_i))=v_i$ for each $z_i \in [p]$ and  $r(\varphi(z_i))=s(\varphi(\sigma(z_i)))$. Moreover, from the definition of $E_p$, we have that each vertex $v_i$ of $E_p$ emits a unique edge, which is $e_i$, and so $v_i$ is the source of a unique infinite path in $E_p$ (the path beginning with $e_i$). Hence, we get a bijective map $\psi:[p]\rightarrow E_p^\infty$, defined by $\psi(z_i)=\xi_i$, where $\xi_i$ is the unique infinite path in $E_p$ with $s(\xi_i)=v_i$ (or, equivalently, $\xi_i$ is the unique infinite path in $E_p$ beginning with $e_i$). 

Now we prove the first item. Let $z_i\in [p]$, so that  $\varphi(z_i)=e_i$. Denote $\xi=\psi(z_i)$, that is, $\xi$ is the unique infinite path in $E_p$ beginning in the edge $e_i$. Write $z_i=a_i z_j$, where $a_i\in \alf$ and $z_j\in \osf$. Notice that, from the definition of $E_p$, we have that $r(e_i)=v_j$ and $v_j$ is the source of a unique edge, which is $e_j$. Therefore, the second edge of $\xi$ is $e_j$. Write $\xi=e_ie_j \widetilde{\xi}$. From the definition of $\psi$, we get that $\psi(z_j)$ is the unique infinite path of $E_p$ beginning with $e_j$ and, since $e_j\widetilde{\xi}$ is an infinite path beginning with $e_j$, we conclude that  $\psi(z_j)=e_j\widetilde{\xi}$. Then,
$$\psi(z_i)=\xi=e_ie_j\widetilde{\xi}=\varphi(z_i)\psi(z_j)=\varphi(z_i)\psi(\sigma(z_i)),$$ which proves the first item. 

The second and third items follow by applying successively the first one. 
\end{proof}

Notice that, by \cite[Remark~2.1.2]{thebook}, the ideal generated by a projection associated with an irrational element $\{p\}$ is graded. Next, we prove that this ideal is $\Z-$graded isomorphic to the Leavitt path algebra of the graph $E_p$. We refer the reader to \cite{thebook} for the concepts regarding Leavitt path algebras.

\begin{proposition}\label{graph and socle} Let $\osf$ be a subshift, let $D=\{p\}\in \mathcal{Q}_\osf$, and let $E_p$ be the associated graph as above. Then, $\langle p_D \rangle$ and the Leavitt path algebra $L_R(E_p)$ are $\Z$-graded isomorphic (with its natural $\mathbb{Z}$-gradings).
\end{proposition}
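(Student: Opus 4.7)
The plan is to build a graded $R$-algebra homomorphism $\Phi : L_R(E_p) \to \langle p_D \rangle$ through the universal property of Leavitt path algebras, show its image is all of $\langle p_D \rangle$ using the description of $\langle p_D \rangle$ from Lemma~\ref{parouAchuva!}, and finally obtain injectivity from the Graded Uniqueness Theorem for Leavitt path algebras. Using the notation of Remark~\ref{voltouovento}, for each $i \in I$ write $z_i = a_i z_{j(i)}$ with $a_i \in \alf$ and $j(i) \in I$. I will define $\Phi$ on generators by $\Phi(v_i) = p_{\{z_i\}}$, $\Phi(e_i) = s_{a_i} p_{\{z_{j(i)}\}}$, and $\Phi(e_i^*) = p_{\{z_{j(i)}\}} s_{a_i}^*$. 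These assignments make sense: Proposition~\ref{quasesolnocampeche} gives $\{z_i\} \in \TCB$ for each $i \in I$, and Proposition~\ref{equivalent paths generate the same ideals} places each $p_{\{z_i\}}$ inside $\langle p_D \rangle$.

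To extend $\Phi$ to a homomorphism, I will check the defining Leavitt path algebra relations. Mutual orthogonality of the $p_{\{z_i\}}$ is immediate. The source/range relations $\Phi(v_i)\Phi(e_i) = \Phi(e_i) = \Phi(e_i)\Phi(v_{j(i)})$ follow from Lemma~\ref{chucrute}(2) applied to $A = \{z_i\}$ and $\alpha = a_i$, which yields $r(\{z_i\}, a_i) = \{z_{j(i)}\}$ and hence $p_{\{z_i\}} s_{a_i} = s_{a_i} p_{\{z_{j(i)}\}}$. The relation $\Phi(e_i^*)\Phi(e_i) = \Phi(v_{j(i)})$ follows from $s_{a_i}^* s_{a_i} = p_{F_{a_i}}$ together with $z_{j(i)} \in F_{a_i}$. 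The orthogonality $\Phi(e_i^*)\Phi(e_k) = 0$ for $i \neq k$ splits into the case $a_i \neq a_k$, handled directly by Lemma~\ref{chucrute}(1), and the case $a_i = a_k$, where the bijection $i \mapsto z_i$ forces $z_{j(i)} \neq z_{j(k)}$ so that $p_{\{z_{j(i)}\}} p_{F_{a_i}} p_{\{z_{j(k)}\}} = 0$. Finally, since each vertex emits a single edge, the Cuntz--Krieger relation reduces to $\Phi(v_i) = \Phi(e_i)\Phi(e_i^*)$, which follows from $s_{a_i} p_{\{z_{j(i)}\}} s_{a_i}^* = p_{\{z_i\}} s_{a_i} s_{a_i}^* = p_{\{z_i\}} p_{Z_{a_i}} = p_{\{z_i\}}$. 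Because $\Phi$ sends vertices to degree $0$, edges to degree $1$, and ghost edges to degree $-1$, it respects the natural $\Z$-gradings on both sides.

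For surjectivity, I will apply Lemma~\ref{parouAchuva!} to express a spanning set of $\langle p_D \rangle$ in the form $s_\alpha s_\beta^* p_{\{\beta y\}} = s_\alpha p_{\{y\}} s_\beta^*$ with $y \in [p]$ and $\alpha y, \beta y \in \osf$. Writing $y = z_k$, $\alpha z_k = z_{k'}$, and $\beta z_k = z_{k''}$, the unique-edge structure of $E_p$ determines unique paths $\mu, \nu$ of lengths $|\alpha|, |\beta|$ starting at $v_{k'}, v_{k''}$ respectively and both ending at $v_k$; a telescoping computation using Lemma~\ref{chucrute}(2) collapses $\Phi(\mu \nu^*)$ to $s_\alpha p_{\{z_k\}} s_\beta^*$. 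Hence the image of $\Phi$ equals $\langle p_D \rangle$. For injectivity, since $\Phi$ is a graded $R$-algebra homomorphism with $\Phi(v_i) = p_{\{z_i\}} \neq 0$ for every vertex $v_i$, the Graded Uniqueness Theorem for Leavitt path algebras concludes the proof. The main obstacle I anticipate is the bookkeeping in the surjectivity step: correctly matching the spanning elements from Lemma~\ref{parouAchuva!} with images of path pairs $\mu \nu^*$ requires careful tracking of the single-edge-per-vertex structure of $E_p$ and the commutation relations in $\ualgshift$; once this identification is clean, the verification of relations and the appeal to graded uniqueness are routine.
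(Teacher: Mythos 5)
Your proposal is correct and follows essentially the same route as the paper's proof: the same assignment on generators (your $\Phi(e_i)=s_{a_i}p_{\{z_{j(i)}\}}$ equals the paper's $p_{\{z_i\}}s_{a_i}$ via Lemma~\ref{chucrute}(2)), the same verification of the Leavitt path algebra relations, and the same surjectivity argument via Lemma~\ref{parouAchuva!} and telescoping products of edge images. The only cosmetic difference is that you invoke the Graded Uniqueness Theorem for injectivity, whereas the paper uses the Cuntz--Krieger Uniqueness Theorem after observing in Remark~\ref{no closed paths in E_p} that $E_p$ has no closed paths and hence vacuously satisfies Condition~(L); both are valid here.
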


\begin{proof} To obtain a homomorphism $\varphi:L_R(E_p)\rightarrow \ualgshift$, we use the universal property of $L_R(E_p)$. So, it is enough to define $\varphi$ on $E^0\cup E^1$ in a way such that the images of $E^0\cup E^1$ satisfy the relations defining $L_R(E_p)$.

We begin with the definition of $\varphi$ on $E^0$. Let $v_i \in E^0$, where $i\in I$, and let $z_i$ be the associated element in $J$. Define $\varphi(v_i)=p_{\{z_i\}}$. Notice that, from Proposition~\ref{quasesolnocampeche}, we have $\{z_i\}\in \TCB$ and hence $\varphi(v_i)$ is well defined. Next, for each $e_i \in E^1$, let $z_i \in J$ be the element corresponding to $i$,  and let $a_i \in \alf$ be the first letter of $z_i$. Define $\varphi(e_i)=p_{\{z_i\}} s_{a_i}$, and $\varphi(e_i^*)=s_{a_i}^* p_{\{z_i\}}$.

Now we verify that $\{\varphi(v_i), \varphi(e_i), \varphi(e_i^*):i\in I\}$ satisfy the relations which define $L_R(E_p)$, see \cite[Definition~1.2.3]{thebook}.
First notice that $\varphi(v_i)$ is idempotent for each $v_i\in E^0$. Moreover, for $i\neq j$, we have that $\varphi(v_i)\varphi(v_j)=p_{\{z_i\}}p_{\{z_j\}}=0$, since $z_i \neq z_j$. 
Next, for a fixed $i\in I$, observe that
$$\varphi(s(e_i))\varphi(e_i)=\varphi(v_i)\varphi(e_i)=p_{\{z_i\}}p_{\{z_i\}} s_{a_i}=p_{\{z_i\}} s_{a_i}=\varphi(e_i).$$ Moreover, for this $i$, let $z_i \in J$ be the  element associated with $i$ and write $z_i=a_i z_j$, where $a_i$ is the first letter of $z_i$. Recall that $r(e_i)=v_j$. Then, $r(\{z_i\}, a_i)=\{z_j\}$ and hence
$$\varphi(e_i) \varphi(r(e_i))=\varphi(e_i)\varphi(v_j)=p_{\{z_i\}} s_{a_i}p_{\{z_j\}}=s_{a_i}p_{r(\{z_i\}, a_i)}p_{\{z_j\}}=p_{\{z_i\}}s_{a_i}=\varphi(e_i).$$ 

So, we proved that
$$\varphi(s(e_i))\varphi(e_i)=\varphi(e_i)=\varphi(e_i)\varphi(r(e_i)).$$
Similarly, the reader can check that $\varphi(e_i^*)=\varphi(e_i^*) \varphi(s(e_i))=\varphi(e_i^*)=\varphi(r(e_i))\varphi(e_i^*)$.

Next, for $i,j\in I$ with $i\neq j$, observe that 
$$\varphi(e_i^*)\varphi(e_j)=s_{a_i}^* p_{\{z_i\}}p_{\{z_j\}}s_{a_j}=0$$ since $p_{\{z_i\}}p_{\{z_j\}}=0$. Moreover, to conclude that $\varphi(e_i^*)\varphi(e_j)=\delta_{i,j}\varphi(r(e_i))$, write $z_i=a_i z_j$ as in Proposition~\ref{quasesolnocampeche} and notice that 
$$\varphi(e_i^*)\varphi(e_i)=s_{a_i}^*p_{\{z_i\}}s_{a_i}=s_{a_i}^*s_{a_i}p_{r(\{z_i\}, a_i)}=p_{F_{a_i}}p_{\{z_j\}}=p_{\{z_j\}}=\varphi(r(e_i)).$$

Finally, for each $i\in I$ (write $z_i=a_i z_j$ as in Proposition~\ref{quasesolnocampeche}), notice that $s^{-1}(v_i)=e_i$, and so 
 $$\sum\limits_{e\in s^{-1}(v_i)} \varphi(s_e)\varphi(s_e^*)=\varphi(e_i)\varphi(e_i^*)=p_{\{z_i\}}s_{a_i} s_{a_i}^* p_{\{z_i\}}=p_{\{z_i\}}p_{Z_{a_i}} p_{\{z_i\}}=p_{\{z_i\}}=p_{v_i}.$$
 
 We have checked that the image of $\varphi$ satisfies all the relations defining $L_R(E_p)$. By the universal property of $L_R(E_p)$, we have that $\varphi$ extends to a homomorphism, which we also call $\varphi:L_R(E_p)\mapsto \ualgshift$. By the definition of $\varphi$ on the generators, we obtain that it is a graded homomorphism.

To finish our proof, it remains to show that $\varphi$ is injective and that $\varphi(L_R(E_p))=\langle p_D \rangle$.

By Remark~\ref{no closed paths in E_p}, $E_p$ has no closed paths, and hence it satisfies (vacuously) Condition~$(L)$. Since $\varphi(v_i)=p_{\{z_i\}}\neq 0$ for each $v_i \in E^0$, we obtain from Cuntz-Krieger Uniqueness Theorem, see \cite[Theorem~2.2.15]{thebook}, that $\varphi$ is injective.

It remains to show that $\varphi(L_R(E_p))=\langle p_D \rangle$.
From Proposition \ref{quasesolnocampeche}, each $z_i\in J$ is an element of $[p]$, and then from Proposition \ref{equivalent paths generate the same ideals} we get that $p_{\{z_i\}}\in \langle p_D \rangle$. Therefore $\varphi(e_i), \varphi(e_i^*)$ and $\varphi(v_i)$ are all elements of $\langle p_D \rangle$, for all $i\in I$, and consequently $\varphi(L_R(E_p))\subseteq \langle p_D \rangle$.

To prove that $\langle p_D \rangle\subseteq \varphi(L_R(E_p))$ we first show the following claim.

{\it Claim: Let $\alpha z\in [p]$, where $\alpha \in \lang$ and $z\in \osf$. Then, $p_{\{\alpha z\}} s_\alpha$ and $s_\alpha^* p_{\{\alpha z\}}$ both belong to $\varphi(L_R(E_p))$.}

We show that $p_{\{\alpha z\}}s_\alpha \in \varphi(L_R(E_p))$ and leave the proof that $s_\alpha^*p_{\{\alpha z\}} \in \varphi(L_R(E_p))$, which is analogous, to the reader. 

Since $\alpha z\in [p]=J$, we have that  $\alpha z=z_{i_1}$ for some $i_1 \in I$. From the definition of $\varphi$, we obtain that $\varphi(e_{i_1})=p_{\{z_{i_1}\}}s_{\alpha_1}$, where $\alpha_1$ is the first letter of $\alpha$. Notice that $\sigma(\alpha z) \in J$, so that $\sigma(\alpha z)=z_{i_2}$ for some $i_2 \in I$. From the definition of $\varphi$, we get that $\varphi(e_{i_2})=p_{\{z_{i_2}\}}s_{\alpha_2}$. Proceeding inductively, we obtain indexes $i_k\in I$, with $k\in \{1,2,...,|\alpha|\}$, such that $\varphi(e_{i_k})=p_{\{z_{i_k}\}}s_{\alpha_{i_k}}.$

From the second item of Lemma \ref{universal properties} we get that $s_{\alpha_k}p_{\{z_{i_{k+1}}\}}=p_{\{z_{i_k}\}}s_{\alpha_k}$, for each $k\in \{1,...,|\alpha|-1\}$. Hence, making the proper computations, we obtain that $$\varphi(e_{i_1})\varphi(e_{i_2})...\varphi(e_{i_{|\alpha|}})=p_{\{\alpha z\}}s_{\alpha_1}...s_{\alpha_{|\alpha|}}=p_{\{\alpha z\}} s_\alpha.$$ Therefore, $p_{\{\alpha z\}}s_\alpha \in \varphi(L_R(E_p))$ and the claim is proved.

Now we show that $\varphi(L_R(E_p))=\langle p_D \rangle.$

Let $\alpha, \beta \in \lang$ and $A\in \TCB$ be such that $p_D s_\alpha p_A s_\beta^*\neq 0$. From the second item of Lemma~\ref{parouAchuva!}, there exists $y\in \osf$ such that $p=\alpha y$, $\{\beta y\}\in \TCB$ and moreover $p_D s_\alpha p_A s_\beta^*=s_\alpha s_\beta^* p_{\{\beta y\}}$. Applying the second item of Lemma~\ref{universal properties}, we get that $p_{\{\alpha y\}}s_\alpha=s_\alpha p_{\{y\}}$ and $s_\beta^* p_{\{\beta y\}}=p_{\{y\}}s_\beta^*$, and therefore 
$$p_{\{\alpha y\}} s_\alpha s_\beta^* p_{\{\beta y\}}=s_\alpha s_\beta^* p_{\{\beta y\}}.$$ It follows from the claim proved above that the element $p_{\{\alpha y\}} s_\alpha s_\beta^* p_{\{\beta y\}}$ belongs to $\varphi(L_R(E_p))$, and so $p_D s_\alpha p_A s_\beta^*\in \varphi(L_R(E_p))$.

Similarly one shows that $s_\alpha p_A s_\beta^* p_D\in \varphi(L_R(E_p))$, for all $\alpha, \beta \in \lang$ and $A\in \TCB$. Consequently, we obtain that $\langle p_D \rangle \subseteq \varphi(L_R(E_p))$ and this finishes the proof of the proposition.
\end{proof}

\begin{corollary}\label{doubleskiff} Let $\osf$ be a subshift. Then, there exists a graph $E$ such that $L_R(E)$ and $Soc(\ualgshift)$ are $\mathbb{Z}$-graded isomorphic (with its natural $\mathbb{Z}$-gradings).    
\end{corollary}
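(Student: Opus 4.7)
The plan is to assemble the result by taking a disjoint union of the graphs $E_p$ constructed in Proposition~\ref{graph and socle}, one per equivalence class of irrational paths. First, if $\mathcal{Q_\osf}=\emptyset$, then by Corollary~\ref{iogurtecaseiro} the socle is zero and we may take $E$ to be the empty graph. Otherwise, for each class $[q]\in \mathcal{Q_\osf}/{\sim}$ choose a representative $p_{[q]}$ and form the graph $E_{p_{[q]}}$ as in Section~\ref{socle as leavitt algebra}. Define
\[
E = \bigsqcup_{[q]\in \mathcal{Q_\osf}/\sim} E_{p_{[q]}},
\]
where the disjoint union means we take the (set-theoretic) disjoint union of vertex sets and of edge sets, with source/range maps inherited componentwise.

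The next step is the standard fact that the Leavitt path algebra of a disjoint union of graphs decomposes as a direct sum of the Leavitt path algebras of its components, and this decomposition respects the canonical $\mathbb Z$-grading (since the generators of each component are $\mathbb Z$-homogeneous in the usual way). Thus,
\[
L_R(E) \;\cong\; \bigoplus_{[q]\in \mathcal{Q_\osf}/\sim} L_R(E_{p_{[q]}})
\]
as $\mathbb Z$-graded $R$-algebras. Applying Proposition~\ref{graph and socle} to each class gives a $\mathbb Z$-graded isomorphism $L_R(E_{p_{[q]}}) \cong \langle p_{[q]}\rangle$, and combining this with Corollary~\ref{socle characterization}, which states that $Soc(\ualgshift)=\bigoplus_{[q]} \langle p_{[q]}\rangle$, yields the desired $\mathbb Z$-graded isomorphism $L_R(E)\cong Soc(\ualgshift)$.

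The step that requires some care is packaging the isomorphisms from Proposition~\ref{graph and socle} across all equivalence classes into a single homomorphism $L_R(E)\to \ualgshift$ with image exactly $Soc(\ualgshift)$. The cleanest way is to invoke the universal property of $L_R(E)$: define the map on generators class-by-class using the formulas from the proof of Proposition~\ref{graph and socle}, verify the Leavitt--Cuntz--Krieger relations (they hold within each component, and generators coming from distinct components multiply to zero because the projections $p_{\{z_i\}}$ attached to inequivalent irrational paths are orthogonal by Proposition~\ref{equivalent paths generate the same ideals}), and conclude the existence of a graded homomorphism. Injectivity follows from the Cuntz--Krieger Uniqueness Theorem, since $E$ inherits Condition~$(L)$ from each $E_{p_{[q]}}$ (Remark~\ref{no closed paths in E_p} makes each component acyclic, hence $E$ is acyclic). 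The image is exactly the sum of the ideals $\langle p_{[q]}\rangle$, which by Corollary~\ref{socle characterization} equals $Soc(\ualgshift)$. The main obstacle, as noted, is just making the orthogonality between different components explicit so that the universal map is well-defined and its image matches the socle; everything else reduces to routine assembly of Proposition~\ref{graph and socle} and Corollary~\ref{socle characterization}.
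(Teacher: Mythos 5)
Your proposal is correct and follows essentially the same route as the paper: form the disjoint union of the graphs $E_p$ over the equivalence classes, decompose $L_R(E)$ as the direct sum of the $L_R(E_p)$, and combine Proposition~\ref{graph and socle} with Corollary~\ref{socle characterization}. The extra discussion of the universal property and orthogonality between components is a more detailed version of the paper's one-line definition $\varphi=\bigoplus_{[p]}\varphi_p$, not a different argument.
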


\begin{proof} 
For each $[p] \in \mathcal{Q}_\osf/\hspace{-0.15 cm}\sim$, let $E_p$ represent the corresponding graph (ensuring that $E_p$ and $E_q$ are disconnected for distinct equivalence classes $[p]$ and $[q]$).
Define $E$ as the disjoint union of all such graphs $E_p$, and observe that $$L_R(E)=\bigoplus\limits_{[p]\in\mathcal{Q}_\osf/\sim } L_R(E_p).$$

For each $[p]\in \mathcal{Q_\osf}/\hspace{-0.15 cm}\sim$, Proposition~\ref{graph and socle} guarantees the existence of an isomorphism $\varphi_p:L_R(E_p)\rightarrow \langle [p] \rangle$.
From  Corollary~\ref{socle characterization}, we get that $Soc(\ualgshift)=\bigoplus\limits_{[p]\in\mathcal{Q}_\osf/\sim } \langle [p] \rangle$. So, $\varphi: L_R(E)\rightarrow Soc(\ualgshift)$ defined by $\varphi=\bigoplus\limits_{[p]\in\mathcal{Q}_\osf/\sim } \varphi_p$ is an isomorphism.
\end{proof}

\begin{remark}\label{rmk48}
    Notice that the graph associated with a subshift described in Corollary~\ref{doubleskiff} is always acyclic, row-finite, and sinkless. Since the graph is acyclic, Corollary~\ref{doubleskiff} and \cite{MR2630124} implies that the socle is locally $K$-matricial; that is, it is the direct union of subalgebras, each isomorphic to a finite direct sum of finite matrix rings over the field $R$.  
\end{remark}

Using the description of the socle of a subshift algebra as the Leavitt path algebra of the associated graph given above, we provide next a criteria to determine when the socle is strongly graded (see \cite{MR4448424} for a study of strongly $\Z$-graded Leavitt path algebras). This criteria depends on Condition~(Y) so, for the reader's convenience, we first recall this definition. 

\begin{definition} (see \cite{MR3938862, GRgrad})
    A graph $E$ satisfies Condition~(Y) if for each infinite path $e_1e_2e_3\ldots$ there exists a finite path $\alpha$ and $k\in \N$ such that $s(e_{k+1})=r(\alpha)$ and $|\alpha|=k+1$.
\end{definition}

\begin{proposition}\label{regata}
    Let $\osf$ be a subshift. 
    The socle of $\ualgshift$ is strongly $\Z$-graded (with its natural $\mathbb{Z}-$grading) if, and only if, the associated graph $E$ given in Corollary~\ref{doubleskiff} satisfies Condition~(Y).% Moreover, $E$ satisfies Condition~(Y) if, and only if, $J_n \neq \emptyset$ for every $n$.
\end{proposition}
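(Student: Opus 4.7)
The plan is to reduce the question to the known graph-theoretic characterization of strongly $\Z$-graded Leavitt path algebras. By Corollary~\ref{doubleskiff}, there is a $\Z$-graded algebra isomorphism between $Soc(\ualgshift)$ and the Leavitt path algebra $L_R(E)$, where $E$ is the graph constructed in Section~\ref{socle as leavitt algebra}. Since a $\Z$-graded isomorphism preserves the property of being strongly graded, the problem reduces to showing that $L_R(E)$ is strongly $\Z$-graded if and only if $E$ satisfies Condition~(Y).

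Next, I would invoke Remark~\ref{rmk48}, which records that $E$ is acyclic, row-finite, and has no sinks. For row-finite graphs with no sinks, a standard result (originally due to Hazrat, see the treatments in \cite{MR3938862, GRgrad}) gives exactly the characterization we need: $L_R(E)$ is strongly $\Z$-graded precisely when $E$ satisfies Condition~(Y). Concretely, strong grading is the condition $L_R(E)_n L_R(E)_{-n} = L_R(E)_0$ for every $n \in \Z$, and at the vertex level this translates into being able to write each $v\in E^0$ as a sum $\sum_i \alpha_i \beta_i^*$ with $s(\alpha_i)=v$ and $|\alpha_i|-|\beta_i|=n$; Condition~(Y) is precisely the combinatorial condition on paths that produces these decompositions in the absence of sinks, and conversely even the case $n=1$ of strong grading forces Condition~(Y) to hold. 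Combining this with the reduction from the previous paragraph yields the statement.

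The main obstacle is purely bookkeeping: one needs to check that the cited characterization is applicable in the generality of this paper, namely for graphs $E$ that may have infinite vertex and edge sets and over an arbitrary field $R$. This poses no real difficulty, since the Hazrat-type characterization of strongly $\Z$-graded Leavitt path algebras is a purely graph-theoretic and algebraic statement that makes no restriction on the cardinality of $E$ beyond row-finiteness, nor on the characteristic of $R$. Once this is verified, the proof of Proposition~\ref{regata} is essentially a direct assembly of Corollary~\ref{doubleskiff}, Remark~\ref{rmk48}, and the cited characterization.
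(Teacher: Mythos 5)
Your proposal is correct and follows exactly the paper's own argument: transfer the strong grading question through the graded isomorphism of Corollary~\ref{doubleskiff}, use Remark~\ref{rmk48} to see that $E$ is row-finite and sinkless, and then apply the known characterization of strongly $\Z$-graded Leavitt path algebras via Condition~(Y) (the paper cites \cite[Theorem~4.2]{MR3938862} and \cite[Theorem~3.9]{GRgrad} for this). No gaps; the paper's proof is the same one-line assembly of these ingredients.
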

\begin{proof}
    This follows from Corollary~\ref{doubleskiff}, Remark~\ref{rmk48}, and \cite[Theorem~4.2]{MR3938862} (or  \cite[Theorem~3.9]{GRgrad}).% The second statement is straightfoward. 
    \end{proof}

\begin{remark}\label{condition Y in E} Let $\osf$ be a subshift and let $E$ be the graph as in Corollary \ref{doubleskiff}, that is, $E$ is the disjoint union of all the graphs $E_p$, where $[p]\in \mathcal{Q}_\osf/\hspace{-0.15 cm}\sim$. Then $E$ satisfies condition $(Y)$ if, and only if, each $E_p$ satisfies condition $(Y)$.
\end{remark}

 Next, we characterize the condition $(Y)$ of $E$ in terms of the shift map $\sigma$ of $\osf$. Let $W\subseteq \osf$ be the set $W=\{p\in \osf: \{p\}\in \mathcal{Q}_\osf\}$. From the first item of Lemma \ref{ventonocampeche} we get that $\sigma(W)\subseteq W$, and from the second one we get that each $z\in \sigma^{-1}(x)$ is also an element of $W$, for each $x\in W$.
\begin{proposition}
    Let $\osf$ be a subshift and let $E$ be the associated graph as in Corollary~\ref{doubleskiff}. Then, $E$ satisfies condition $(Y)$ if and only if for each $q\in W$ there exists an $n\in \N$ such that $\sigma^{-(n+1)}(\sigma^n(q))\neq \emptyset$.
\end{proposition}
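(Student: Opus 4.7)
The plan is to translate Condition~(Y) for the graph $E$ into a condition on the shift map acting on $W$, using the explicit structure of each component $E_p$ and the bijections provided by Lemma~\ref{lema bijecoes Ep}. By Remark~\ref{condition Y in E}, I may analyze one connected component $E_p$ (corresponding to $[p]\in \mathcal{Q_\osf}/\hspace{-0.15 cm}\sim$) at a time, since $W=\bigsqcup_{[p]}[p]$ and since Condition~(Y) is quantified over infinite paths, which live in one component at a time.

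Fix $[p]\in \mathcal{Q_\osf}/\hspace{-0.15 cm}\sim$ and write $J=[p]$. The third item of Lemma~\ref{lema bijecoes Ep} tells me that the infinite path $\psi(z)=e_1e_2e_3\cdots$ associated with $z\in J$ satisfies $e_{k+1}=\varphi(\sigma^k(z))$, hence $s(e_{k+1})=v_{\sigma^k(z)}$. For finite paths, I will exploit the fact that, by the construction of $E_p$ (Remark~\ref{voltouovento}), every vertex $v_w$ emits a unique edge, which travels from $v_w$ to $v_{\sigma(w)}$; iterating this, a finite path of length $m$ ending at $v_w$ is uniquely determined by its initial vertex $v_y$, where $y\in J$ must satisfy $\sigma^m(y)=w$. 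Conversely, any $y\in\osf$ with $\sigma^m(y)=w$ is automatically $\sim$-equivalent to $w$, hence lies in $J$ whenever $w\in J$; the fact that such a $y$ really is in $W$ is guaranteed by Lemma~\ref{ventonocampeche}(2). Thus, finite paths of length $m$ in $E_p$ ending at $v_w$ are in natural bijection with $\sigma^{-m}(w)$.

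Combining the two observations, Condition~(Y) applied to $\psi(z)$ becomes: there exist $k\in\N$ and $y\in\sigma^{-(k+1)}(\sigma^k(z))$, i.e., $\sigma^{-(k+1)}(\sigma^k(z))\neq\emptyset$. Since $\psi$ is a bijection onto $E_p^\infty$, Condition~(Y) holds for $E_p$ iff this occurs for every $z\in J$; quantifying over all equivalence classes and using $W=\bigsqcup_{[p]}[p]$ yields the stated criterion. The only subtle point is ensuring that $\sigma$-preimages of elements of $W$ stay inside $W$ (so that they actually correspond to vertices of $E_p$), which is exactly Lemma~\ref{ventonocampeche}(2); once this is in place, the remaining verification is a bookkeeping translation between the one-edge-per-vertex combinatorics of $E_p$ and the iteration of $\sigma$.
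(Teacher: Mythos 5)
Your proof is correct and follows essentially the same route as the paper's: reduce to a single component $E_p$ via Remark~\ref{condition Y in E}, use Lemma~\ref{lema bijecoes Ep} together with the fact that every vertex of $E_p$ emits exactly one edge, and translate Condition~(Y) into the existence of an element of $\sigma^{-(k+1)}(\sigma^k(z))$. Your packaging of the key step as a bijection between finite paths of length $m$ ending at $v_w$ and $\sigma^{-m}(w)$ is a slightly more streamlined way of organizing the explicit path constructions ($\alpha$, $\beta$, $\eta$, $\mu$) that the paper carries out in each direction, but the underlying argument is the same.
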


\begin{proof} First, suppose that for each $q\in W$ there exists an $n\in \N$ such that $\sigma^{-(n+1)}(\sigma^n(q))\neq \emptyset$. By Remark~\ref{condition Y in E}, it is enough to prove that, for each $[p]\in \mathcal{Q}_\osf/\hspace{-0.15 cm}\sim$, the graph $E_p$ satisfies condition $(Y)$. Fix a $[p]\in \mathcal{Q}_\osf/\hspace{-0.15 cm}\sim$. Let $\xi$ be an infinite path in the graph $E_p$, $\psi$ be the map defined in Lemma~\ref{lema bijecoes Ep}, and $z\in [p]$ be such that $\psi(z)=\xi$. Furthermore, let $m\in \N$ be such that $\sigma^{-(m+1)}(\sigma^m(z))\neq \emptyset$, and chose $y\in \sigma^{-(m+1)}(\sigma^m(z))$. Then, $\sigma^{m+1}(y)=\sigma^m(z)$. Denote this last element by $x$. So, we get that $y=a x$ and $z=b x$, where $a,b\in \lang$ are such that $|a|=m+1$ and $|b|=m$. Now. let $\eta=\psi(y)$. Bu the second item of Lemma~\ref{lema bijecoes Ep} we get that 
$$\xi=\psi(z)=\varphi(z)\varphi(\sigma(z))...\varphi(\sigma^{m-1}(z))\psi(\sigma^{m}(z))$$ that is, $\xi$ has the form $\xi=\alpha\psi(\sigma^m(z))$, where $\alpha$ is the finite path $$\alpha=\varphi(z)\varphi(\sigma(z))\varphi(\sigma^2(z))...\varphi(\sigma^{m-1}(z)).$$
Similarly, $\eta$ has the form $\eta=\beta\psi(\sigma^{m+1}(y))$, where 
$$\beta=\varphi(y)\varphi(\sigma(y))\varphi(\sigma^2(y))...\varphi(\sigma^{m}(y)).$$ 
Since $\sigma^{m+1}(y)=\sigma^m(z)$, we obtain that $\eta=\beta \psi(\sigma^m(z))$. So, we have proved that $\xi=\alpha\psi(\sigma^m(z))$ and that there exists an infinite path $\eta$ of the form $\eta=\beta \psi(\sigma^m(z))$ with $|\beta|=|\alpha|+1$. This proves that $E_p$ satisfies condition $(Y)$.

For the converse, suppose that $E$ satisfies condition $(Y)$, and let $q\in W$. Let $\psi$ be as in Lemma~\ref{lema bijecoes Ep}, and define $\xi=\psi(q)$, which is an infinite path in $E_q$.  Given that $E$ satisfies Condition $(Y)$, it follows that $E_q$ also satisfies Condition $(Y)$. Consequently, there exists an infinite path $\eta$ in $E_q$ such that $\eta=\alpha \mu$ and $\xi=\beta \mu$, where $\mu$ is an infinite path in $E_q$ and $\alpha, \beta$ are finite paths in $E_q$ with $|\alpha|=|\beta|+1$. Let $y \in [q]$ be such that $\psi(y)=\eta$ and write $m=|\beta|$. 
By the second item of Lemma~\ref{lema bijecoes Ep}, we get that 
$$\beta\mu=\xi=\psi(q)=\varphi(q)\varphi(\sigma(q))\varphi(\sigma^2(q))...\varphi(\sigma^{m-1}(q))\psi(\sigma^m(q)),$$ from where we conclude that $\mu=\psi(\sigma^m(q))$. Similarly, from 
$$\alpha\mu=\eta=\psi(y)=\varphi(y)\varphi(\sigma(y))\varphi(\sigma^2(y))...\varphi(\sigma^m(y))\psi(\sigma^{m+1}(y)),$$ we obtain that $\mu=\psi(\sigma^{m+1}(y))$. Therefore $\psi(\sigma^{m+1}(y))=\mu=\psi(\sigma^m(q))$ and, since $\psi$ is injective, we have that $\sigma^{m+1}(y)=\sigma^m(q)$. This last equality means that $y\in\sigma^{-(m+1)}(\sigma^m(q))$, and so we are done.
\end{proof}

\begin{remark} If $\sigma_{|_W}:W\rightarrow W$ is surjective then, by the proposition above, $E$ satisfies condition $(Y)$. However, surjectivity of $\sigma$ is not necessary, as can be seen in the following example: consider the subshift $\osf$ over the alphabet $\alf=\{0,1,2\}$ determined by the set of forbidden words $F=\{00, 10, 20\}$. The associated graph satisfies Condition~$(Y)$ but $\sigma_{|_W}$ is not onto.
    
\end{remark}

As an application of Proposition~\ref{regata}, we show how to use the graded structure of the socle as an invariant of conjugacy between Ott-Tomforde-Willis subshifts.

\begin{theorem}\label{conjugacy} Let $\osf_1$ and $\osf_2$ be subshifts and $E_i$, $i=1,2$, be the associated graphs as in Corollary~\ref{doubleskiff}. If $E_1$ satisfies Condition~(Y) and $E_2$ does not, then the associated OTW-subshifts are not conjugate.
\end{theorem}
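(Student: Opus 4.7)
The plan is to argue by contradiction using the invariance of the graded socle under graded isomorphism. Suppose the OTW-subshifts associated with $\osf_1$ and $\osf_2$ are conjugate. By the algebraic characterization of conjugacy for OTW-subshifts (as cited in the introduction, building on the work in \cite{BGGV3}), this conjugacy induces a $\Z$-graded isomorphism $\Psi\colon \TCA_R(\osf_1)\to \TCA_R(\osf_2)$ between the corresponding unital subshift algebras, where both algebras carry their natural $\Z$-grading.

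Since the socle is defined purely algebraically (as the sum of all minimal left ideals), any algebra isomorphism restricts to an isomorphism between the socles; because $\Psi$ is $\Z$-graded and the socle is a graded ideal (Corollary~\ref{socle via line paths}), this restriction is a $\Z$-graded isomorphism
\[
\Psi\bigl|_{Soc(\TCA_R(\osf_1))}\colon Soc(\TCA_R(\osf_1))\longrightarrow Soc(\TCA_R(\osf_2)).
\]
Combining this with Corollary~\ref{doubleskiff}, we obtain a chain of $\Z$-graded isomorphisms $L_R(E_1)\cong Soc(\TCA_R(\osf_1))\cong Soc(\TCA_R(\osf_2))\cong L_R(E_2)$, so $L_R(E_1)$ and $L_R(E_2)$ are $\Z$-graded isomorphic.

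To reach a contradiction, I would invoke Proposition~\ref{regata}: since $E_1$ satisfies Condition~(Y), the socle $Soc(\TCA_R(\osf_1))$ is strongly $\Z$-graded, while since $E_2$ fails Condition~(Y), the socle $Soc(\TCA_R(\osf_2))$ is not strongly $\Z$-graded. However, the property of being strongly $\Z$-graded is preserved under $\Z$-graded isomorphism (it can be phrased as $A_1A_{-1}=A_0=A_{-1}A_1$, a condition that transfers verbatim across graded isomorphisms). This contradicts the graded isomorphism obtained above, completing the proof.

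The main obstacle I anticipate is step one, namely pinning down the precise algebraic consequence of OTW-conjugacy. The theorem is stated for OTW-subshifts specifically (rather than for arbitrary subshifts), which strongly suggests that the relevant input is a theorem asserting that OTW-conjugacy yields a $\Z$-graded isomorphism of the unital subshift algebras (analogous to the Carlsen--Rout type result for finite alphabets). Once this input is secured, the rest of the argument is a short and clean transfer of the graded invariant ``socle is strongly $\Z$-graded'' through the chain of isomorphisms, leveraging the identification in Corollary~\ref{doubleskiff} and the characterization in Proposition~\ref{regata}.
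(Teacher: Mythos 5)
Your proposal is correct and follows essentially the same route as the paper: conjugacy of the OTW-subshifts yields a $\Z$-graded isomorphism of the unital subshift algebras (the paper cites \cite[Theorem~7.6]{BGGV} for exactly the input you anticipated needing), this restricts to a graded isomorphism of the socles, and Proposition~\ref{regata} together with the fact that being strongly $\Z$-graded is a graded-isomorphism invariant gives the contradiction.
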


\begin{proof}
Suppose that the OTW subshifts are conjugate. Then, by \cite[Theorem~7.6]{BGGV}, there is a graded isomorphism between the associated subshift algebras, which induces a graded isomorphism between the socle of both algebras. By Proposition~\ref{regata}, the socle of one subshift algebra is strongly graded, while the other is not. Since a graded isomorphism preserves the strong graded structure of an algebra, we obtain a contradiction. 
\end{proof}

\begin{remark}\label{isometriconj}
    Under the hypothesis of the theorem above, using \cite[Theorem~6.11]{BGGV3}, we conclude that the associated subshifts built using the $\frac{1}{2^i}$ metric (which induces the product topology) are not isometrically conjugate.
        Moreover, for finite alphabets, OTW subshifts coincide with the usual notion of subshifts (with the product topology). Hence, the result above also provides an invariant for the conjugacy of subshifts (with the product topology) over finite alphabets. 
    \end{remark}

To illustrate an application of the theorem above, we provide two examples of subshifts. For one of them, Condition~(Y) is satisfied for the associated graph, while for the other it is not. We begin with a subshift that induces a 2-step Ott-Tomforde-Willis subshift not conjugate to any 1-step Ott-Tomforde-Willis subshift.  In this case, the graph does not satisfy Condition~(Y).

\begin{example}\label{example01} Let $\N^*$ be the alphabet and define $P$ as the following subset of $(\N^*)^3$: 
$$P:=\{(1,j,1),(j,1,j):j\in \N^*\setminus\{2\}\}\cup\{(j,j+1,j+2):j\in \N^*\}.$$ Let $F=(\N^*)^3\setminus P$ and $\osf_F:=\osf$. By [\cite{DDStep}, Section 3], the Ott-Tomforde-Willis subshift associated with $F$ is not conjugate to any 1-step Ott-Tomforde-Willis subshift. Notice that 
$$\osf:=\{ (1j)^\infty, (j1)^\infty: j\in \N^*\setminus\{2\}\}\cup \{(k+j)_{j\in \N^*}: k\in \N^* \}.$$

We show that $Soc(\ualgshift)$ is $R$-isomorphic to $M_\infty(R)$, the algebra of infinite matrices with finitely many nonzero entries.

The set of the irrational paths, in the subshift $\osf$, is $\mathcal{Q_\osf}=\{\{(k+j)_{j\in \N}\}:k\in \N^*\}$. Let $p=12345...$. Then, according to the equivalence relation given in Definition~\ref{def equiv line paths}, all the elements of $\mathcal{Q_\osf}$ are equivalent to $A=\{p\}$. By Corollary~\ref{socle characterization} we conclude that  $$Soc(\ualgshift)=\langle p_A\rangle.$$

For the element $p=1234...$, the associated graph $E_p$ is the graph below. 
\begin{center}
\begin{tikzpicture}[node distance=1cm, auto]
  \node[draw,circle] (w0) {$u_0$};
  \node[draw,circle, right=of w0] (w1) {$u_1$};
  \node[draw,circle, right=of w1] (v1) {$u_2$};
  \node[draw,circle, right=of v1] (v2) {$u_3$};
  \node[draw, right=of v2] (dots1) {$\dots$};
  \draw[->] (w0) edge node{$e_0$} (w1);
  \draw[dashed] (v2) -- (dots1);
  \draw[->] (w1) edge node{$e_1$} (v1);
  \draw[->] (v1) edge node{$e_2$} (v2);
  \end{tikzpicture}
\end{center}

Given that $L_R(E_p)$ is isomorphic to $M_\infty(R)$ and, by  Proposition~\ref{graph and socle}  $L_R(E_p)$ and $\langle p_A \rangle$ are isomorphic, it follows that $Soc(\ualgshift)$ and $M_\infty(R)$ are isomorphic.

\end{example}

The next example exhibits a subshift such that the graph associated with the socle satisfies Condition~(Y). 

\begin{example}\label{example08} Let 
$$\osf:=\{(k+j)_{j\in \N^*}: k\in \Z \}.$$

The set of the irrational paths is $\mathcal{Q_\osf}=\{(k+j)_{j\in \N^*}:k\in \Z\}$. Let $p=12345...$. As before, all the elements of $\mathcal{Q_\osf}$ are equivalent to $A=\{p\}$ and hence, by Corollary~\ref{socle characterization}, we have that  $Soc(\ualgshift)=\langle p_A\rangle.$

To construct the graph associated with $\osf$, we chose $I_n=\{-n:n\in \N^*\}$ and $I=\Z$. The graph $E_p$ is depicted below. 
\begin{center}
\begin{tikzpicture}[node distance=1cm, auto]
  \node[draw,circle] (w0) {$u_0$};
  \node[draw,circle, right=of w0] (w1) {$u_1$};
  \node[draw,circle, right=of w1] (v1) {$u_2$};
  \node[draw,circle, right=of v1] (v2) {$u_3$};
  \node[draw, right=of v2] (dots1) {$\dots$};
  \draw[->] (w0) edge node{$e_0$} (w1);
  \draw[dashed] (v2) -- (dots1);
  \draw[->] (w1) edge node{$e_1$} (v1);
  \draw[->] (v1) edge node{$e_2$} (v2);
  \node[draw, left=of w0] (dots1) {$\dots$};
  \draw[dashed] (w0) -- (dots1);
  \end{tikzpicture}
\end{center}

The Leavitt path algebra of the graph above is isomorphic to $M_\infty(R)$ and hence so is $Soc(\ualgshift)$. Moreover, by Proposition~\ref{regata}, $Soc(\ualgshift)$ is strongly graded. 

\end{example}

Since one of the graphs of the examples above satisfies Condition~(Y) and the other does not, we obtain that the corresponding OTW-subshifts are not conjugate, as stated below. 

\begin{corollary}\label{saidarapida}
    The Ott-Tomforde-Willis subshifts associated with the subshifts of  Examples~\ref{example01} and \ref{example08} are not conjugate. Moreover, there is no isometric conjugacy between the associated subshifts built using the $\frac{1}{2^i}$ metric (which induces the product topology). 
\end{corollary}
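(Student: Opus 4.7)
The plan is to apply Theorem~\ref{conjugacy} and Remark~\ref{isometriconj} directly, after verifying that the graph associated with Example~\ref{example08} satisfies Condition~(Y) while the one associated with Example~\ref{example01} does not. Since both graphs are explicitly displayed in the examples and are of very simple shape, the verification should be routine, and no serious obstacle is anticipated.

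First I would examine the graph $E_p$ of Example~\ref{example01}, namely the one-sided ray $u_0\xrightarrow{e_0}u_1\xrightarrow{e_1}u_2\xrightarrow{e_2}\cdots$. Its unique infinite path is $\xi=e_0e_1e_2\cdots$, and I would note that the vertex $u_0$ receives no edges and, more generally, the only path ending at $u_k$ is the prefix $e_0e_1\cdots e_{k-1}$, which has length exactly $k$. Hence, for every $k\in\N$, there is no finite path $\alpha$ of length $k+1$ with $r(\alpha)=s(e_{k+1})=u_k$, so Condition~(Y) fails for this graph.

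Next I would examine the graph of Example~\ref{example08}, the two-sided line $\cdots\to u_{-1}\to u_0\to u_1\to u_2\to\cdots$ in which every vertex is both the source and the range of exactly one edge. For an arbitrary infinite path $f_1f_2\cdots$ starting at some $u_m$, the vertex $s(f_{k+1})=u_{m+k}$ is the range of the finite path $e_{m-1}e_m\cdots e_{m+k-1}$, which has length $k+1$. Thus Condition~(Y) holds.

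With Condition~(Y) satisfied for the graph of Example~\ref{example08} but not for that of Example~\ref{example01}, Theorem~\ref{conjugacy} immediately yields that the associated OTW-subshifts are not conjugate, and Remark~\ref{isometriconj} upgrades this conclusion to the non-existence of an isometric conjugacy between the corresponding subshifts endowed with the $\frac{1}{2^i}$-metric (which induces the product topology). The only step requiring care is ensuring that the infinite paths and finite paths ending at prescribed vertices are correctly enumerated in each graph; both enumerations are immediate from the pictures.
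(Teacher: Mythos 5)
Your proof is correct and follows the same route as the paper, which simply cites Theorem~\ref{conjugacy} and Remark~\ref{isometriconj}; your added value is the explicit verification of Condition~(Y) for the two graphs, which the paper only asserts inside the examples. One small imprecision: the ray of Example~\ref{example01} has infinitely many infinite paths (one emanating from each vertex $u_m$), not a unique one, but this is harmless since to refute Condition~(Y) it suffices to exhibit the single offending path $e_0e_1e_2\cdots$ based at $u_0$, exactly as you do.
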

\begin{proof}
This follows directly from Theorem~\ref{conjugacy} and Remark~\ref{isometriconj}.
\end{proof}

\section{The relation between the algebraic and the C*-socle}

We conclude the paper with a concise examination of the socle of the C*-algebra associated with a subshift, aiming to describe the relation between the socles in both analytical and purely algebraic contexts. In the analytical context, the socle is also defined as the sum of all left minimal ideals (it's important to note that these ideals are not necessarily closed). Below, we recall the definition of the C*-algebra associated with a subshift over an arbitrary alphabet, as presented in \cite{BGGV3}.

\begin{definition} Let $\osf$ be a subshift.
We define $\ucsalgshift$ as the universal unital C*-algebra generated by projections $\{p_A: A\in\TCB\}$ and partial isometries $\{s_a: a\in\alf\}$ subject to the relations:
\begin{enumerate}[(i)]
    \item $p_{\osf}=1$, $p_{A\cap B}=p_Ap_B$, $p_{A\cup B}=p_A+p_B-p_{A\cap B}$ and $p_{\emptyset}=0$, for every $A,B\in\TCB$;
    %\item $s_as_a^*s_a=s_a$ and $s_a^*s_as_a^*=s_a^*$ for all $a\in\alf$;
    \item $s_{\beta}s^*_{\alpha}s_{\alpha}s^*_{\beta}=p_{C(\alpha,\beta)}$ for all $\alpha,\beta\in\lang$, where $s_{\eword}:=1$ and, for $\alpha=\alpha_1\ldots\alpha_n\in\lang$, $s_\alpha:=s_{\alpha_1}\cdots s_{\alpha_n}$ and $s_\alpha^*:=s_{\alpha_n}^*\cdots s_{\alpha_1}^*$.
\end{enumerate}
\end{definition}

\begin{remark}\label{ualgshift contained in csualgshift}
    It is shown in \cite{BGGV3} that the subshift algebra $\ualgshift$ is embedded densely in $\ucsalgshift$ via a homomorphism that sends generators of $\ualgshift$ to generators of $\ucalgshift$.  
\end{remark}

Next, we show that the socle of $\ualgshift$ is contained in the socle of $\ucalgshift$ and give an example where the inclusion is proper. We will use the concept of minimal idempotent recalled below Lemma~\ref{many elements in Q}.
%The proof of the containment part of the aforementioned result follows the same steps of the proof of \cite[Proposition~3.6]{Socle1} (once we have set up all the necessary ingredients) but the example in \cite{Socle1} can not be used here (because it presents sinks) and so we produce a modified example. 
% Recall that a minimal idempotent in an algebra $A$ is a non-zero idempotent $e\in A$ such that $e A e$
% is a division algebra, see \cite[Chapter IV]{SocleBook}. Moreover, if $A$ is semiprime then $L$ is a minimal left ideal of $A$ if, and only if, $L = A e$ where $e$ is a minimal idempotent in $A$ (see \cite[Proposition~30.6]{SocleBook}).
Taking into consideration that $\ualgshift$ and $\ucalgshift$ are semiprime (see \cite[Corollary~5.6]{reductiontheoremofsubshifts} for primeness of $\ualgshift$), we have the following result.

\begin{lemma}\label{p_A are minimal idempotents}
    Let $\osf$ be a subshift, $R$ be a field and $A\in \mathcal{Q_\osf}$. Then, $p_A$ is a minimal idempotent in $\uCalgshift$ and also in $\ucsalgshift$.
\end{lemma}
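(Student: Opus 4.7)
The plan is to recycle the computation already carried out in the proof of Proposition~\ref{elements of Q generate minimal ideals} to handle the algebraic case, and then to propagate it to the C*-algebraic case by a density argument. Concretely, for $A=\{p\}\in\mathcal{Q_\osf}$, the key identity established there is that for any spanning monomial $s_\alpha p_B s_\beta^*$ of $\uCalgshift$, either $p_A s_\alpha p_B s_\beta^* p_A=0$ or else $\alpha,\beta$ are initial segments of $p$, $\sigma^{|\alpha|}(p)=\sigma^{|\beta|}(p)$, and by Lemma~\ref{many elements in Q} we must have $\alpha=\beta$, giving $p_A s_\alpha p_B s_\beta^* p_A = p_A$. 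Linear extension yields $p_A\uCalgshift p_A = \mathbb{C}\,p_A$, a field, so $p_A$ is a minimal idempotent in $\uCalgshift$.

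For the C*-algebra $\ucsalgshift$, the strategy is to show $p_A\ucsalgshift p_A = \mathbb{C}\,p_A$ as well, which immediately gives that $p_A$ is a minimal idempotent there. By Remark~\ref{ualgshift contained in csualgshift}, $\uCalgshift$ sits densely inside $\ucsalgshift$ via a map that sends generators to generators, so in particular (with a mild abuse of notation identifying $p_A$ in both algebras) the subspace $p_A \uCalgshift p_A$ is dense in $p_A \ucsalgshift p_A$, since left and right multiplication by the projection $p_A$ are norm-continuous on the C*-algebra.

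The last step is then to observe that the dense subspace $p_A\uCalgshift p_A = \mathbb{C}\,p_A$ is one-dimensional, hence already closed in $\ucsalgshift$. Therefore $p_A\ucsalgshift p_A = \mathbb{C}\,p_A$, which is isomorphic to $\mathbb{C}$ and in particular a division ring, showing that $p_A$ is a minimal idempotent in $\ucsalgshift$.

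The main obstacle is essentially bookkeeping: one needs to be sure that the Reduction-Theorem-style computation in Proposition~\ref{elements of Q generate minimal ideals} is valid when the generating monomials are read inside $\uCalgshift$ (with $R=\mathbb{C}$), and that $p_A$ really is nonzero in $\ucsalgshift$ (which follows from the density of $\uCalgshift$ in $\ucsalgshift$, since $p_A\neq 0$ in $\uCalgshift$ and the embedding is injective, as recorded in \cite{BGGV3}). Once these two points are in place, the density-plus-finite-dimensionality argument closes the C*-case cleanly.
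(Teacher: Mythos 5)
Your proposal is correct and follows essentially the same route as the paper: both establish $p_A\uCalgshift p_A=\mathbb{C}\,p_A$ by invoking the computation in the proof of Proposition~\ref{elements of Q generate minimal ideals}, and then use the dense embedding of $\uCalgshift$ in $\ucsalgshift$ together with the fact that the one-dimensional (hence closed) subspace $\mathbb{C}\,p_A$ equals the closure of $p_A\uCalgshift p_A$ to conclude $p_A\ucsalgshift p_A=\mathbb{C}\,p_A$. Your explicit remarks on the norm-continuity of multiplication by $p_A$ and on $p_A\neq 0$ in $\ucsalgshift$ are just slightly more detailed bookkeeping of the same argument.
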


\begin{proof} 
Consider an element $A\in \mathcal{Q}_\osf$. From the proof of Proposition~\ref{elements of Q generate minimal ideals}, it follows that $p_A \uCalgshift p_A=\mathbb{C} p_A$, implying that $p_A$ is a minimal idempotent in $\uCalgshift$. Furthermore, by Remark~\ref{ualgshift contained in csualgshift}, we deduce that the closure of $p_A \uCalgshift p_A$ in $\ucalgshift$, denoted by $\overline{p_A \uCalgshift p_A}$, equals $p_A \ucalgshift p_A$. However, as $p_A \ualgshift p_A=\mathbb{C} p_A$, we conclude that
$$\mathbb{C}p_A=\overline{\mathbb{C}p_A}=\overline{p_A \uCalgshift p_A}=p_A \ucalgshift p_A.$$ Hence, 
$p_A \ucalgshift p_A=\mathbb{C}p_A$ and therefore $p_A$ is also a minimal idempotent in $\ucalgshift$.    
\end{proof}

\begin{proposition}\label{contenidos} Let $\osf$ be a subshift. Then,
$$Soc(\uCalgshift)\subseteq Soc(\ucalgshift).$$ %and there exists a subshift such that the inclusion is proper. 
\end{proposition}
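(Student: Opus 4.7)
The plan is to combine Theorem~\ref{socle description} with Lemma~\ref{p_A are minimal idempotents} and the density inclusion recalled in Remark~\ref{ualgshift contained in csualgshift}. Since every generator of $Soc(\uCalgshift)$ is of the form $p_A$ with $A\in\mathcal{Q}_\osf$, it suffices to verify that each such $p_A$ lies in $Soc(\ucalgshift)$, and then multiply on the left by arbitrary elements of $\uCalgshift \subseteq \ucalgshift$.

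First I would recall that $\ucalgshift$ is semiprime, so by \cite[Proposition~30.6]{SocleBook} a left ideal of the form $\ucalgshift e$ is minimal precisely when $e$ is a minimal idempotent. Fixing $A\in\mathcal{Q}_\osf$, Lemma~\ref{p_A are minimal idempotents} guarantees that $p_A$ is a minimal idempotent in $\ucalgshift$, and therefore $\ucalgshift p_A$ is a minimal left ideal of $\ucalgshift$. In particular $\ucalgshift p_A \subseteq Soc(\ucalgshift)$.

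Next, by Remark~\ref{ualgshift contained in csualgshift} the algebra $\uCalgshift$ sits inside $\ucalgshift$ (with the generators matching), so $\uCalgshift p_A \subseteq \ucalgshift p_A \subseteq Soc(\ucalgshift)$ for every $A\in\mathcal{Q}_\osf$. Finally, by Theorem~\ref{socle description} applied to the field $R=\mathbb{C}$,
\[
Soc(\uCalgshift) \;=\; \sum_{A\in\mathcal{Q}_\osf}\uCalgshift p_A \;\subseteq\; Soc(\ucalgshift),
\]
which is the desired containment.

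There is no real obstacle here: the argument is a direct assembly of results already established in Sections~3, 4, and in Lemma~\ref{p_A are minimal idempotents}. The only point worth being explicit about is that minimality of $p_A$ as an idempotent in the C*-algebra (not just in the algebraic completion $\uCalgshift$) is exactly what Lemma~\ref{p_A are minimal idempotents} provides, and this is what lets us pass from ``$\ucalgshift p_A$ is a minimal left ideal'' — a purely algebraic statement about the (non-closed) left ideal — to the inclusion into the algebraic socle of the C*-algebra.
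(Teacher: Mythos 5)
Your argument is correct and follows essentially the same route as the paper: both proofs reduce to showing $p_A\in Soc(\ucalgshift)$ for $A\in\mathcal{Q}_\osf$ via Lemma~\ref{p_A are minimal idempotents} and \cite[Proposition~30.6]{SocleBook}. The only cosmetic difference is that you use the left-ideal sum description of $Soc(\uCalgshift)$ from Theorem~\ref{socle description}, while the paper uses the two-sided ideal description of Corollary~\ref{socle via line paths}; both work equally well here.
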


\begin{proof} According to Corollary~\ref{socle via line paths}, $Soc(\uCalgshift)$ is the two-sided ideal of $\uCalgshift$ generated by the set $\{p_A: A\in \mathcal{Q_\osf}\}$. To prove that  $Soc(\uCalgshift)\subseteq Soc(\ucalgshift)$, since $Soc(\ucalgshift)$ is also a two-sided ideal, it is enough to show that $p_A\in Soc(\ucalgshift)$ for each $A\in \mathcal{Q_\osf}$. By Lemma~\ref{p_A are minimal idempotents}, $p_A$ is a minimal idempotent element in $\ucalgshift$ for each $A\in \mathcal{Q_\osf}$. Therefore, from \cite[Proposition~30.6]{SocleBook},  we obtain that $\ucalgshift p_A$ is a minimal left ideal in $\ucalgshift$ and so, in particular, $p_A\in Soc(\ucalgshift)$ for each $p_A \in \mathcal{Q_\osf}$. Consequently, $Soc(\uCalgshift)\subseteq Soc(\ucalgshift)$.

%To show that the inclusion might be proper we consider the infinite graph $E$ given by

%{\color{red}
      
%$$\xymatrix{ {\bullet} \ar@(dl,ul) \ar[r] & {\bullet}}$$ 

 %Siles Molina showed that the
%Leavitt path algebra of this graph is the algebraic Toeplitz algebra $T={\mathbb C}\langle x,y\ |\
%xy=1\rangle$, for which it is known that $Soc{T}={\mathbb M}_\infty({\mathbb C})$. However, the completion of
%$T$ is the analytic Toeplitz algebra, whose socle is the algebra of finite rank operators, which strictly
%contains ${\mathbb M}_\infty({\mathbb C})$ (matrices of countable size with only a finite number of nonzero entries).}
\end{proof}

We conclude the paper with an example illustrating that the inclusion $Soc(\uCalgshift) \subseteq Soc(\ucalgshift)$ may be strict. 

\begin{example} 

Let $\osf$ be the subshift of Example \ref{example01}, that is, $$\osf:=\{ (1j)^\infty, (j1)^\infty: j\in \N^*\setminus\{2\}\}\cup \{(k+j)_{j\in \N^*}: k\in \N^* \}.$$ Let $p=1234...$ and $A=\{p\}$. 

The graph $E_p$ associated with the element $p$ is the graph 
\begin{center}
\begin{tikzpicture}[node distance=1cm, auto]
  \node[draw,circle] (w0) {$u_1$};
  \node[draw,circle, right=of w0] (w1) {$u_2$};
  \node[draw,circle, right=of w1] (v1) {$u_3$};
  \node[draw,circle, right=of v1] (v2) {$u_4$};
  \node[draw, right=of v2] (dots1) {$\dots$};
  \draw[->] (w0) edge node{$e_1$} (w1);
  \draw[dashed] (v2) -- (dots1);
  \draw[->] (w1) edge node{$e_2$} (v1);
  \draw[->] (v1) edge node{$e_3$} (v2);
  \end{tikzpicture}
\end{center}
\end{example}

Let $H$ be a separable Hilbert space, with an orthonormal Schauder basis $\{\delta_n:n\geq 1\}$, and let $K(H)$ denote the C*-algebra of all the compact operators in $H$. For each $n\geq 1$, define $S_{e_n}, P_{u_n} \in K(H)$ as follows: $S_{e_n}(\delta_{n+1})=\delta_n$ and $S_{e_n}(\delta_k)=0$ for each $k\neq n+1$, and $P_{u_n}(\delta_n)=\delta_n$ and $P_{u_n}(\delta_k)=0$ for each $k\neq n$. Notice that the Hilbert adjoint  operator $S_{e_n}^*$ of $S_{e_n}$ is such that $S_{e_n}^*(\delta_n)=\delta_{n+1}$, and $S_{e_n}^*(\delta_k)=0$ for $k\neq n$.  

It is well known that there exits an injective homomorphism $\Phi:L_\mathbb{C}(E_p)\rightarrow K(H)$ such that $\Phi(e_n)=S_{e_n}$, $\Phi(e_n^*)=S_{e_n}^*$ and $\Phi(u_n)=P_{u_n}$. Moreover, this homomorphism extends to an isomorphism of C*-algebras $\Psi:C^*(E_p)\rightarrow K(H)$. 

Now, let $\varphi:L_\mathbb{C}(E_p)\rightarrow \uCalgshift$ be as in Proposition \ref{graph and socle} (that is, $\varphi$ is defined on the generators as follows: $\varphi(u_i)=p_{\{z_i\}}$, $\varphi(e_i)=p_{\{z_i\}}s_i$ and $\varphi(e_i^*)=s_i^*p_{\{z_i\}}$, where $z_i=i(i+1)(i+2)...\in \osf$). By Corollary~\ref{doubleskiff}, we have that $Im(\varphi)=Soc(\ualgshift)$. Let $B=\Phi(L_\mathbb{C}(E_p))$, so that $\varphi\circ \Phi^{-1}:B\rightarrow Soc(\ualgshift)$ is an isomorphism. 
Through direct calculations, we obtain that the matrix associated with each element of $B$, relative to the basis ${\delta_n : n \geq 1}$, belongs to $M_\infty(\mathbb{C})$. Furthermore, for every element $T \in M_\infty(\mathbb{C})$, there exists an element in $B$ whose associated matrix is precisely $T$. Consequently, there exists an isomorphism between $B$ and $M_\infty(\mathbb{C})$, implying that $Soc(\ualgshift)$ is isomorphic to $M_\infty(\mathbb{C})$.

Recall that $\varphi$ is injective and hence it extends to an (injective) homomorphism $\psi:C^*(E_p)\rightarrow \ucalgshift$. So, we get an injective homomorphism $f:K(H)\rightarrow \ucalgshift$ defined by $f=\psi\circ \Psi^{-1}$. Notice that $f(B)=Soc(\ualgshift)$.

Next, we show that $Soc(\ualgshift)$ is strictly contained in $Soc(\ucalgshift)$. To prove this, we show that there exists an element $L\in K(H)$ such that $f(L)\in Soc(\ucalgshift)$ and $L$ has the property that its associated matrix, relative to the basis $\{\delta_n:n\geq 1\}$, does not belong to $M_\infty(\mathbb{C})$. 

Define, for each $x,y\in H$,
the operator $L_{x,y} \in K(H)$ by $L_{x,y}(z)=x\langle z,y\rangle$.
Through a few straightforward computations, we obtain that  $L_{\delta_i,\delta_j}=\Psi(e_ie_{i+1}...e_{j-1})$ for each $i<j$, that $L_{\delta_i,\delta_i}=\Psi(u_i)$, and that $L_{\delta_i, \delta_j}=\Psi(e_{i-1}^*e_{i-2}^*...e_j^*)$ if $i>j$.

Notice that $f(L_{\delta_1,\delta_j})=p_{z_1}s_1...s_{j-1}$ for each $j>1$, and that $f(L_{\delta_1,\delta_1})=p_{\{z_1\}}$. So,  $f(L_{\delta_1,\delta_j})\in p_{\{z_1\}}\ucalgshift$ for each $j\geq 1$.
Now, let $h=\sum\limits_{j=1}^\infty{\frac{1}{j}}\delta_j$ and, for each $n\geq 1$, define $h_n=\sum\limits_{j=1}^n\frac{1}{j}\delta_j$. Observe that $f(L_{\delta_1, h_n})=\sum\limits_{j=1}^n \frac{1}{j}f(L_{\delta_1,\delta_j})\in p_{\{z_1\}}\ucalgshift$. Since $h_n \rightarrow h$ (in $H$), we obtain that $L_{\delta_1,h_n}\rightarrow L_{\delta_1,h}$ in $K(H)$, so that $f(L_{\delta_1,h})$ belongs to the closure of $p_{\{z_1\}}\ucalgshift$ (in $\ucalgshift$). As the set $p_{\{z_1\}}\ucalgshift$ is closed in $\ucalgshift$, we conclude that $f(L_{\delta_1,h})\in p_{\{z_1\}}\ucalgshift$. 

Now, by Lemma~\ref{p_A are minimal idempotents}, we get that $p_{\{z_1\}}$ is a minimal idempotent. Hence, from \cite[Proposition~30.6]{SocleBook}, we get that $p_{\{z_1\}} \in Soc(\ucalgshift)$. Since $Soc(\ucalgshift)$ is a two-sided ideal, we obtain that $p_{\{z_1\}}\ucalgshift\subseteq Soc(\ucalgshift)$, and thus $f(L_{\delta_1,h})\in Soc(\ucalgshift)$. 

To finish the proof, notice that the matrix of $L_{\delta_1, h}$, relative to the basis $\{\delta_n:n\geq 1\}$, has infinitely many nonzero elements, and so this matrix does not belong to $M_\infty(\mathbb{C})$. So, the element $L=L_{\delta_1,h}$ has the property that its associated matrix is not an element of $M_\infty(\mathbb{C})$, but $f(L) \in Soc(\ucalgshift)$. Therefore, $Soc(\ualgshift)$ (which is isomorphic to $M_\infty(\mathbb{C})$), is strictly contained in $Soc(\ucalgshift)$.

\bibliographystyle{abbrv}
\bibliography{ref}

\end{document}